\theoremstyle{plain}
\newtheorem{theorem}{Theorem}[section]
\newtheorem{lemma}[theorem]{Lemma}
\newtheorem{proposition}[theorem]{Proposition}
\newtheorem{corollary}[theorem]{Corollary}
\newtheorem*{conjecture}{Triviality Conjecture}
\newtheorem*{proposition*}{Proposition}
\newtheorem*{corollary*}{Corollary}
\newtheorem*{theorem*}{Theorem}
\theoremstyle{plain}
\newcounter{zaehler}
\newtheorem{introthm}[zaehler]{Theorem}
\newtheorem*{introcor*}{Corollary}
\theoremstyle{definition}
\newtheorem{definition}[theorem]{Definition}
\newtheorem{example}[theorem]{Example}
\newtheorem{remark}[theorem]{Remark}
\newtheorem{observation}[theorem]{Observation}
\newcommand{\E}{\mathbb{E}}
\newcommand{\Z}{\mathbb{Z}}
\newcommand{\F}{\mathbb{F}}
\newcommand{\HH}{\mathrm{HH}}
\newcommand{\C}{\mathcal{C}}
\newcommand{\D}{\mathcal{D}}
\newcommand{\lto}{\longrightarrow}
\newcommand{\KU}{\mathrm{KU}}
\newcommand{\free}{\mathrm{free}}
\newcommand{\Barc}{\mathrm{Bar}}
\newcommand{\Alg}{\mathrm{Alg}}
\newcommand{\res}{\mathrm{res}}
\newcommand{\Spc}{\mathcal{S}}
\newcommand{\Map}{\mathrm{Map}}
\newcommand{\map}{\mathrm{map}}
\renewcommand{\nu}{\mathrm{nu}}
\newcommand{\fgt}{\mathrm{fgt}}
\newcommand{\m}{\mathfrak{m}}
\newcommand{\op}{\mathrm{op}}
\newcommand{\Cat}{\mathrm{Cat}_{\infty}}
\renewcommand{\O}{\mathcal{O}}
\newcommand{\SSeq}{\mathrm{SSeq}}
\newcommand{\End}{\mathrm{End}}
\newcommand{\id}{\mathrm{id}}
\newcommand{\fib}{\mathrm{fib}}
\newcommand{\gr}{\mathrm{gr}}
\newcommand{\Mod}{\mathrm{Mod}}
\newcommand{\Sp}{\mathrm{Sp}}
\newcommand{\tr}{\mathrm{tr}}
\newcommand{\bS}{\mathbb{S}}
\newcommand{\aug}{\mathrm{aug}}
\newcommand{\RMod}{\mathrm{RMod}}
\newcommand{\wc}{\mathrm{wc}}
\newcommand{\Monad}{\mathrm{Monad}}
\newcommand{\Operad}{\mathrm{Operad}}
\newcommand{\Fun}{\mathrm{Fun}}
\newcommand{\Fin}{\mathrm{Fin}}
\newcommand{\ind}{\mathrm{ind}}
\newcommand{\R}{\mathbb{R}}
\newcommand{\RP}{\mathbb{RP}}
\newcommand{\BT}{\mathrm{B}\mathbb{T}}
\newcommand{\T}{\mathbb{T}}
\newcommand{\Sym}{\mathrm{free}}
\newcommand{\one}{\mathbf{1}}
\newcommand{\Equiv}{\mathrm{Equiv}}
\renewcommand{\P}{\mathcal{P}}
\newcommand{\Q}{\mathbb{Q}}
\begin{document}

\title{Formality of $\E_n$-algebras and cochains on spheres}

\thanks{ML was supported by the Danish National Research Foundation through the Copenhagen Centre for Geometry and Topology (DNRF151). GH was supported by an ERC Starting Grant (no.\ 950048) and an NWO VIDI grant (no.\ 223.093).} 

\author[G.~Heuts]{Gijs Heuts} 
\address{Mathematical Institute, Utrecht University, Budapestlaan 6, 3584 CD Utrecht, The Netherlands}
\email{g.s.k.s.heuts@uu.nl}

\author[M.~Land]{Markus Land}
\address{Mathematisches Institut, Ludwig-Maximilians-Universit\"at M\"unchen, Theresienstra\ss e 39, 80333 M\"unchen, Germany}
\email{markus.land@math.lmu.de}

\date{\today}

\begin{abstract}
We study the loop and suspension functors on the category of augmented $\E_n$-algebras. One application is to the formality of the cochain algebra of the $n$-sphere. We show that it is formal as an $\E_n$-algebra, also with coefficients in general commutative ring spectra, but rarely $\E_{n+1}$-formal unless the coefficients are rational. Along the way we show that the free functor from operads in spectra to monads in spectra is fully faithful on a nice subcategory of operads which in particular contains the stable $\E_n$-operads for finite $n$. We use this to interpret our results on loop and suspension functors of augmented algebras in operadic terms.
\end{abstract}

\maketitle 

\tableofcontents

\section{Introduction}

The notion of formality originates in rational homotopy theory, where a space $X$ is called \emph{formal} if its $\E_{\infty}$-algebra $C^*(X;\Q)$ of rational cochains is equivalent, as an $\E_\infty$-algebra, to the cohomology ring $H^*(X;\Q)$.\footnote{Classically, this is formulated via Sullivan's $\mathcal{A}_{\mathrm{PL}}$ functor as a zig-zag of quasi-isomorphisms of CDGA's.} There are many interesting examples, including spheres, loop spaces, and compact K\"ahler manifolds. One surprising aspect of rational formality of a space $X$ is that it suffices to show that $C^*(X;\Q)$ and $H^*(X;\Q)$ are equivalent as $\E_1$-algebras \cite{saleh,CPRNW}.

If one changes coefficients from $\Q$ to a finite field $\F_p$, the situation changes drastically. One of the simplest examples, namely the $\E_{\infty}$-algebra $C^*(S^n;\F_p)$ of cochains on a sphere, already fails to be formal. Indeed, the cohomology of a sphere is a trivial square-zero algebra, but the $\E_\infty$-structure of the cochain algebra is not trivial. The latter is essentially a consequence of the fact that the operation $\mathrm{Sq}^0$ acts nontrivially on $H^n(S^n;\F_p)$. Indeed, the Steenrod squares can be interpreted as power operations which exist for any $\E_\infty$-algebra over $\F_p$ and vanish on trivial algebras (see \cref{prop:DL-operation} for a more general result).

Still, one can ask for a weaker notion of formality: does there exist a $k \geq 1$ such that $C^*(S^n;\F_p)$ and $H^*(S^n;\F_p)$ are equivalent as $\E_k$-algebras? The answer turns out to be yes if $k \leq n$, independently of the coefficients. In fact, we will prove a stronger statement. If $R$ is an $\E_{n+1}$-ring and $X$ a space, we write $C^*(X;R)$ for the \emph{$R$-valued cochains on $X$}, defined as the limit $\varprojlim_X R$ in the $\infty$-category $\Alg_{\E_n}(\Mod_R)$ of $\E_n$-$R$-algebras. A choice of basepoint in $X$ equips $C^*(X;R)$ with an augmentation to $R$, making it into an object of $\Alg^\aug_{\E_n}(\Mod_R)$. We shall refer to a trivial square-zero extension in $\Alg_{\E_n}^\aug(\Mod_R)$ as an algebra which is \emph{$\E_n$-trivial over $R$}, see \cref{section:stable-case} for the precise definitions.

\begin{introthm}
\label{thm:intro}
Let $R$ be an $\E_{n+1}$-ring spectrum and $X$ a pointed space. Then $C^*(\Sigma^n X;R)$ is $\E_n$-trivial over $R$.
\end{introthm}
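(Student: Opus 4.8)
The plan is to reduce the statement to a purely algebraic fact about the categorical loop functor on augmented $\E_n$-algebras. Since $C^*(-;R)$ is defined as a limit, as a functor $\Spc^{\op}_* \to \Alg^\aug_{\E_n}(\Mod_R)$ it preserves limits; equivalently, it carries colimits of pointed spaces to limits of augmented algebras. The reduced suspension sits in a pushout square exhibiting $\Sigma X$ as the pushout of $* \leftarrow X \to *$, so applying $C^*(-;R)$ yields the pullback $R \times_{C^*(X;R)} R$, where both maps $R \to C^*(X;R)$ are the zero morphism (the unit). Because $R$ is a zero object of the pointed $\infty$-category $\Alg^\aug_{\E_n}(\Mod_R)$, this pullback is by definition the categorical loop $\Omega\, C^*(X;R)$. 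Iterating, I would obtain a natural equivalence $C^*(\Sigma^n X;R) \simeq \Omega^n\, C^*(X;R)$, reducing the theorem to the assertion that $\Omega^n$ lands in the full subcategory of $\E_n$-trivial algebras.

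It therefore suffices to prove the following structural statement: for every augmented $\E_n$-$R$-algebra $A$, the $n$-fold loop $\Omega^n A$ is a trivial square-zero extension. This is where I expect the real work to lie, and it is exactly the kind of result the paper's analysis of loop and suspension functors should supply. The trivial algebras form the image of the fully faithful functor $\mathrm{triv}\colon \Mod_R \to \Alg^\aug_{\E_n}(\Mod_R)$, $M \mapsto R \oplus M$; to identify $\Omega^n A$ with $\mathrm{triv}(I)$ for $I$ its augmentation ideal, I would show that the canonical comparison map from $\Omega^n A$ to the trivial algebra on its own augmentation ideal is an equivalence, i.e.\ that all nonlinear contributions vanish.

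To control these nonlinear contributions I would use the operadic (Goodwillie) filtration of the identity functor on $\Alg^\aug_{\E_n}(\Mod_R)$, whose associated graded in arity $k$ is built from $\E_n(k) \otimes_{\Sigma_k} I^{\otimes k}$, the trivial algebra being the truncation to arity $1$. One application of $\Omega$ desuspends the augmentation ideal, so at the level of the filtration the arity-$k$ layer is shifted by $\Sigma^{-(k-1)}$ relative to the linear layer after each loop. Combined with the fact that the unstable $\E_n$-operad is modelled by configuration spaces $\E_n(k) \simeq \mathrm{Conf}_k(\R^n)$, whose relevant homological dimension grows linearly in $k$ with slope $n-1$, a counting argument shows that after $n$ loops the net slope in $k$ becomes negative; each layer of arity $k \geq 2$ is thereby pushed into a range in which the attaching maps of the tower are forced to be null, so the tower collapses onto its linear layer and $\Omega^n A$ is trivial.

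The main obstacle is this last step: promoting the numerical shift-and-connectivity heuristic into an actual proof that the higher layers vanish, rather than merely move. This requires a precise understanding of how $\Omega$ interacts with the operadic filtration — in particular that looping genuinely trivializes the operadic structure maps of the $\E_n$-operad in the claimed range — which is the technical heart that the paper's study of the loop and suspension functors is designed to establish.
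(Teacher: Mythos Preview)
Your reduction step is correct and matches the paper: $C^*(-;R)$ carries pushouts of pointed spaces to pullbacks of augmented $\E_n$-algebras, so $C^*(\Sigma^n X;R) \simeq \Omega^n C^*(X;R)$, and the theorem becomes the assertion that $\Omega^n A$ is $\E_n$-trivial for every augmented $\E_n$-algebra $A$ in $\Mod_R$.

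The gap is in your proof of that assertion. Your mechanism is a dimension count: $\E_n(k)$ has cohomological dimension $(n-1)(k-1)$, each loop shifts the arity-$k$ layer by $\Sigma^{-(k-1)}$ relative to the linear one, and after $n$ loops the net slope in $k$ is negative. But a negative slope does not make anything vanish. Concretely, after $n$ loops the relevant operadic input in arity $k$ is (up to twisting) $\E_n(k) \otimes \bS^{-n\rho_k}$, which for $k=2$ is $\bS^{-n\rho_2}$, a nonzero spectrum. And since the theorem is asserted for \emph{arbitrary} $A$ with no connectivity hypothesis on the augmentation ideal $I$, there is no degree range in which maps out of these layers into $I$ are forced to be null. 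At best your numerology recovers the fact that the individual Euler classes $e^n_k\colon \E_n(k) \to \E_n(k)\otimes S^{n\rho_k}$ are equivariantly null (this is Proposition~\ref{prop:EnoperadEuler}), hence each structure map $D_k^{\E_n}(\Sigma^{-n}I) \to \Sigma^{-n}I$ is null separately; but triviality as an $\E_n$-algebra requires these nullhomotopies coherently, and your filtration argument does not supply that.

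The paper takes a completely different, purely formal route that sidesteps any connectivity estimate. The key input is Lurie's identification $\Barc \circ \free_{\E_n}^{\E_{n+1}} \simeq \Sigma_{\E_n}$, which the paper packages into a commuting square (Theorem~\ref{thm:suspensionEn}) factoring $\Sigma_{\E_n}$ in two ways through $\Barc$ and $\free$. Passing to right adjoints gives a square factoring $\Omega$ through the restriction $\res^{\E_n}_{\E_{n-1}}$ and the right adjoint $\omega$ of $\Barc$ (Corollary~\ref{cor:loopsEn}); iterating yields $\Omega^n \simeq \omega^n \circ \res^{\E_n}_{\E_0}$. Since $\res^{\E_n}_{\E_0} \circ \tr_{\E_n} \simeq \id$, one then identifies $\omega^n$ with $\Omega^n \circ \tr_{\E_n}$ on $\E_0$-algebras (Theorem~\ref{thm:trivial-algebras}), giving $\Omega^n A \simeq \tr_{\E_n}(\Omega^n \m(A))$. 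No filtrations or dimension counts enter; the argument is valid for every $A$.
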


Specializing to the case $X = S^0$ and $R = \F_p$ implies the $\E_n$-formality of $C^*(S^n;\F_p)$. In fact, \cref{thm:intro} itself is a special case of a much more general statement about augmented $\E_n$-algebras in stable $\infty$-categories:

\begin{introthm}
\label{thm:intro2}
Let $\C$ be a stably $\E_n$-monoidal $\infty$-category and $A$ an augmented $\E_n$-algebra in $\C$. Then the $n$-fold loop object $\Omega^n A$ is $\E_n$-trivial.
\end{introthm}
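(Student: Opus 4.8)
The plan is to isolate the content of the statement in the (non-unital) multiplicative structure carried by the augmentation ideal and to show that $n$-fold looping forces all of its operations to vanish. Write $I(A) = \fib(A \to \one)$ for the augmentation ideal functor $\Alg_{\E_n}^{\aug}(\C) \to \C$. Since $I$ preserves limits and $\Omega$ is a limit, we have $I(\Omega^n A) \simeq \Omega^n I(A) \simeq I(A)[-n]$, so on underlying objects $\Omega^n A \simeq \one \oplus I(A)[-n]$; the entire content of the theorem is therefore that the residual $\E_n$-operad action on $I(A)[-n]$ is trivial. I would reformulate ``trivial'' using the fully faithful trivial-algebra functor $\mathrm{triv}\colon \C \to \Alg_{\E_n}^{\aug}(\C)$, $M \mapsto \one \oplus M$ (which is $\Omega^\infty$ for the stabilization $\Sp(\Alg_{\E_n}^{\aug}(\C)) \simeq \C$): an algebra is $\E_n$-trivial exactly when it is an infinite-loop object, so the theorem asserts that in $\Alg_{\E_n}^{\aug}(\C)$ every $n$-fold loop object is already an infinite-loop object.

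To access the operations I would use Dunn additivity, $\Alg_{\E_n}^{\aug}(\C) \simeq \Alg_{\E_1}^{\aug}(\Alg_{\E_{n-1}}(\C))$, together with the monadic bar construction: the loop functor on augmented algebras is computed by a bar resolution against the free $\E_n$-monad $\Sym$, and $n$-fold looping is governed by the $n$-fold bar construction of the $\E_n$-operad. The decisive input is that this iterated bar construction is the $\E_n$-cooperad shifted by $n$ (Koszul self-duality of $\E_n$ up to an $n$-fold suspension). The $n$-fold desuspension visible in $I(A)[-n]$ is then exactly cancelled by this $n$-fold operadic shift, so that---after accounting for degrees---no operation of positive arity can act nontrivially on $\Omega^n A$, i.e.\ the residual structure is square-zero. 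Here the abstract's fully faithfulness of $\free\colon \Operad \to \Monad$ on the relevant subcategory is what lets me transport the computation between the monadic (bar-resolution) side and the operadic (Koszul-duality) side.

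The main obstacle is twofold. First, one must control the entire coherent $\E_n$-structure on $\Omega^n A$, not merely the binary multiplication: it is easy to see by a degree count that the single product map $(I(A)[-n])^{\otimes 2} \to I(A)[-n]$ is forced to be null, but ruling out all higher operations and their coherences is precisely what the bar-construction/connectivity estimate for the $\E_n$-operad must deliver. Second, the naive induction along Dunn additivity is problematic because the intermediate category $\Alg_{\E_{n-1}}(\C)$ is no longer stable, so I cannot simply feed it back into the theorem; I expect to circumvent this by running the argument operadically and all $n$ factors at once, using stability of $\C$ only at the end to guarantee that the relevant connectivity/shift bound is sharp. Verifying that the $n$-fold bar construction of $\E_n$ has exactly the connectivity needed---no more and no less---to collapse the structure after precisely $n$ loops is, I expect, the crux of the argument.
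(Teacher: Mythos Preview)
Your first paragraph is correct and useful: the reformulation of ``$\E_n$-trivial'' as ``infinite-loop object'' via $\tr_{\E_n} = \Omega^\infty$ is exactly the right setup, and the underlying-object computation $I(\Omega^n A) \simeq I(A)[-n]$ is accurate. But from there your route diverges sharply from the paper's, and the divergence contains a genuine gap.

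The paper's argument is much more elementary than what you propose; in particular it uses neither Koszul self-duality of $\E_n$ nor any connectivity estimate, and the operads-to-monads fully faithfulness (Theorem~\ref{thm:intro3}) plays \emph{no role} in the proof of Theorem~\ref{thm:intro2}. The key step is a commutative square of left adjoints
\[
\begin{tikzcd}
\Alg_{\E_n}^{\aug}(\C) \ar{dr}{\Sigma_{\E_n}} \ar{r}{\free_{\E_n}^{\E_{n+1}}}\ar{d}[swap]{\Barc} & \Alg_{\E_{n+1}}^{\aug}(\C) \ar{d}{\Barc} \\
\Alg_{\E_{n-1}}^{\aug}(\C) \ar{r}[swap]{\free_{\E_{n-1}}^{\E_{n}}} & \Alg_{\E_n}^{\aug}(\C)
\end{tikzcd}
\]
whose upper-right triangle is precisely Lurie's identification $\Barc \circ \free_{\E_0}^{\E_1} \simeq \Sigma$ read through Dunn additivity, and whose lower-left triangle then follows by a colimit-preservation bootstrap. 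Passing to right adjoints gives $\Omega \simeq \omega \circ \res^{\E_n}_{\E_{n-1}}$ with $\omega$ right adjoint to $\Barc$, and iterating (always over the \emph{same} $\C$, so the non-stability obstacle you worried about never arises) gives $\Omega^n \simeq \omega^n \circ \res^{\E_n}_{\E_0}$. Stability enters only at the very end: since $\res^{\E_n}_{\E_0} \circ \tr_{\E_n} \simeq \id$, one has $\omega^n \simeq \omega^n \circ \res^{\E_n}_{\E_0} \circ \tr_{\E_n} \simeq \Omega^n \circ \tr_{\E_n}$, and hence $\Omega^n A \simeq \tr_{\E_n}(\Omega^n \res^{\E_n}_{\E_0} A)$.

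The gap in your proposal is the sentence ``$n$-fold looping is governed by the $n$-fold bar construction of the $\E_n$-operad.'' This is never made precise, and its natural interpretations conflate two different bar constructions: the \emph{algebra}-level $\Barc\colon \Alg_{\E_n}^{\aug}(\C) \to \Alg_{\E_{n-1}}^{\aug}(\C)$ (iterated $n$ times) and the \emph{operadic} bar construction $B\O$ (applied once) whose relation to $\E_n$ via $K\E_n \simeq S^{-n}\E_n$ is Koszul self-duality. The loop functor $\Omega$ on $\Alg_{\E_n}^{\aug}(\C)$ is simply $\one \times_{(-)} \one$; linking it to $\omega$ is exactly the content of the square above, not a corollary of operadic Koszul duality. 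You have also inverted the logical role of Theorem~\ref{thm:intro3}: in the paper it is used \emph{after} Theorem~\ref{thm:intro2} is proved, to reinterpret the functor square as a square of operads in $\Sp$ (Section~\ref{sec:stableEn}), not as an input to the proof. Finally, invoking Koszul self-duality of $\E_n$ would make the argument circular in spirit: that statement is strictly deeper than Theorem~\ref{thm:intro2}, and indeed the paper notes that its square of operads can be viewed as a shadow of that self-duality rather than the other way around.
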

Here the loops functor $\Omega$ is computed in the $\infty$-category $\Alg_{\E_n}^{\aug}(\C)$ of augmented $\E_n$-algebras in $\C$ and again, we refer to \cref{section:stable-case} for the precise definitions of trivial $\E_n$-algebras in $\C$.
\cref{thm:intro} is the special case $\C = \mathrm{Mod}_R$ and $A=C^*(X;R)$, using that $C^*(\Sigma^n X; R) \cong \Omega^n C^*(X;R)$. Let us remark that Mandell has considered the same question and claims a version of \cref{thm:intro} in the case where $R$ is an ordinary commutative ring \cite{mandell}, but we are not aware of a published reference of his, and that work of Berger--Fresse \cite{Berger-Fresse} proves \cref{thm:intro} again in the case where $R$ is an ordinary commutative ring.

We also investigate to what extent \cref{thm:intro} is sharp and first conjecture the following.

\begin{conjecture}\hypertarget{conj}
Let $n \geq 1$ and let $A$ be an $\E_{n+2}$-algebra in $\Sp$. Then $C^*(S^n;A)$ is $\E_{n+1}$-trivial over $A$ if and only if $A$ is a $\Q$-algebra.
\end{conjecture}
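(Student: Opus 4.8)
The plan is to prove the two implications by rather different means, with the forward (\emph{if}) direction coming from rational formality and the converse (\emph{only if}) isolating a single power operation that detects non-rationality.

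For the \emph{if} direction, suppose $A$ is a $\Q$-algebra. Rational formality of spheres says that $C^*(S^n;\Q)$ is formal as an $\E_\infty$-algebra, hence equivalent to $H^*(S^n;\Q) \simeq \Q \oplus \Sigma^{-n}\Q$; as $n \geq 1$ the product of two copies of the degree $(-n)$ class lands in a vanishing group, so this is the trivial square-zero extension and $C^*(S^n;\Q)$ is $\E_\infty$-trivial over $\Q$. Since $S^n$ is a finite complex, $C^*(S^n;-)$ commutes with base change, so $C^*(S^n;A) \simeq C^*(S^n;\Q) \otimes_\Q A$ is a base change of a trivial square-zero extension and therefore again trivial. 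This yields the $\E_{n+1}$-triviality (indeed $\E_\infty$-triviality) of $C^*(S^n;A)$.

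For the \emph{only if} direction I would argue by contraposition. If $A$ is not a $\Q$-algebra then there is a prime $p$ with $B := \F_p \otimes A \neq 0$, an $\E_{n+2}$-algebra in $\Mod_{\F_p}$. As cochains commute with base change we have $C^*(S^n;B) \simeq C^*(S^n;A) \otimes_A B$, and base change preserves trivial square-zero extensions; so it suffices to show that the $\F_p$-linear algebra $C^*(S^n;B)$ is not $\E_{n+1}$-trivial. Its augmentation ideal is $\Sigma^{-n}B$, with fundamental class $x \in \pi_{-n}$ corresponding to $1 \neq 0 \in \pi_0 B$. The idea is to evaluate on $x$ the power operation $\theta$ coming from the top cell of the mod-$p$ extended power for $\E_{n+1}$-algebras. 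For $p = 2$ this extended power is governed by $\mathrm{Conf}_2(\R^{n+1}) \simeq S^n$ with antipodal action, whose Borel construction is $\RP^n$; the cells index operations $Q^i x$ for $-n \leq i \leq 0$, and the top operation $\theta = Q^0 x$ again lives in degree $-n$. Crucially $\theta$ comes from the top ($n$-dimensional) cell, so it is a genuinely $\E_{n+1}$-operation: the $\E_n$-extended power only sees $\mathrm{Conf}_2(\R^n) \simeq S^{n-1}$, i.e.\ $\RP^{n-1}$, so every operation visible to the $\E_n$-structure already vanishes on $x$ by the $\E_n$-triviality guaranteed by \cref{thm:intro2}. (For odd $p$ one takes the corresponding top operation in the Dyer--Lashof family of $\mathrm{Conf}_p(\R^{n+1})$.) By \cref{prop:DL-operation} the operation $\theta$ vanishes on every $\E_{n+1}$-trivial algebra, so it remains only to prove that $\theta(x) \neq 0$.

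I expect this last computation to be the main obstacle. Concretely I anticipate $\theta(x) = x$ up to a unit, expressing that the top $\E_{n+1}$-power operation acts as the stable Frobenius (the $\Sq^0$-type operation) on the fundamental class of the sphere; since $x$ corresponds to $1 \neq 0$, this would finish the proof. The cleanest route I foresee is to use the equivalence $C^*(S^n;B) \simeq \Omega^n C^*(S^0;B)$ from \cref{thm:intro}: on $C^*(S^0;B) = B \times B$ the fundamental class is the idempotent $(0,1)$, so there the relevant operation is literally the $p$-th power, which fixes the idempotent and is the bottom operation $Q^0$ in degree $0$. One must then verify that $Q^0$ is stable under the loop functor, so that it is carried by $\Omega^n$ to the \emph{top} operation $\theta$ on $C^*(S^n;B)$ in degree $-n$, giving $\theta(x)=x$. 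Making this tracking precise for all primes $p$ and both parities of $n$, and ensuring the entire power-operation formalism is available for $\E_{n+1}$-algebras in a general $\F_p$-linear stable setting rather than just in $\Mod_{\F_p}$, is where I expect the real work to lie.
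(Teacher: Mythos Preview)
The statement you are trying to prove is stated in the paper as a \emph{conjecture}; the paper does not claim a proof in general, only in the special cases listed in \cref{thm:conj-intro}. Your ``if'' direction is fine and matches the paper's \cref{lemma:rational-case}. Your power-operation argument for the ``only if'' direction, once you have reduced to an $\F_p$-algebra $B = A\otimes\F_p \neq 0$, is essentially the paper's \cref{prop:DL-operation}: the operation you call $\theta$ is the $Q^0$ used there, and the computation $Q^0(x)=x$ on cochains of spaces is exactly what the paper invokes.

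The genuine gap is your reduction step: you assert that if $A$ is not rational then there exists a prime $p$ with $A\otimes\F_p \neq 0$. This is where the entire difficulty of the conjecture lives. The condition $A\otimes H\F_p = 0$ for all $p$ is equivalent to $A\otimes H\Z$ being rational, which for general (unbounded) ring spectra does \emph{not} force $A$ itself to be rational. A concrete witness is Morava $K$-theory $K(1)=\KU/p$: it is not rational, yet $K(1)\otimes H\F_q = 0$ for every prime $q$ (for $q=p$ because $K(1)_*H\F_p=0$, and for $q\neq p$ because $K(1)\otimes H\Z=0$). Granted, $\KU/p$ is only $\E_1$ and so lies outside the range $n\geq 1$ of the conjecture, but this already shows your implication requires a serious argument using the $\E_{n+2}$-structure and cannot be taken for granted. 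The paper establishes it only for bounded-below $A$ (trivially) and, with substantially more work, for $\E_\infty$-rings---and even there it must supplement $A\otimes\F_p$ with the condition $A\otimes\KU/p\neq 0$ and invoke Hahn's theorem together with the Hopkins--Smith nilpotence theorem (see \cref{lemma:implication}). For a general $\E_{n+2}$-ring $A$ no such argument is known, which is precisely why the statement remains a conjecture.

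In short: your proposal correctly identifies the mechanism (the top Dyer--Lashof operation $Q^0$) that drives the known cases, but it does not prove the conjecture; it reproves the special case $A\otimes\F_p\neq 0$ already covered by \cref{thm:conj2}, and the unjustified reduction is exactly the open part.
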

The interesting part of this conjecture is the \emph{only if} implication.
We offer a proof in many cases of interest:

%

\begin{introthm}[See Theorems~\ref{thm:conj} and \ref{thm:conj2}]\label{thm:conj-intro}
The \hyperlink{conj}{Triviality Conjecture} holds 
\begin{enumerate}
\item\label{item:thm1} when $n\leq 2$,
\item\label{item:thm2} when $A$ is bounded below, 
\item\label{item:thm3} when $A$ is the underlying $\E_{n+2}$-algebra of an $\E_\infty$-algebra, and
\item\label{item:thm4} when there exists a prime $p$ such that $A \otimes \F_p$ or $A \otimes \KU/p$ are non-zero.
\end{enumerate}
\end{introthm}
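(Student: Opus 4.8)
The statement is an equivalence, and the \emph{if} direction holds in all cases: when $A$ is a $\Q$-algebra the $A$-module $C^*(S^n;A) \simeq A \oplus \Sigma^{-n}A$ carries the rational homotopy type of $S^n$, and since spheres are formal with square-zero rational cohomology, it is $\E_\infty$- and a fortiori $\E_{n+1}$-trivial over $A$. Thus the content of \cref{thm:conj-intro} is the contrapositive of \emph{only if}: assuming $A$ is not a $\Q$-algebra, and that we are in one of the four listed situations, one must produce a nonzero obstruction to $\E_{n+1}$-triviality. The first reduction I would make is to base change. The cochains $C^*(S^n;A)$ form a dualizable augmented $\E_{n+1}$-algebra, the construction commutes with base change along any map $A \to B$ of $\E_{n+2}$-rings, and triviality is preserved under base change; hence it suffices to find one such $B$ for which $C^*(S^n;B)$ is visibly non-trivial.

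The detection mechanism is a single power operation that separates $\E_{n+1}$- from $\E_n$-structures. By the suspension--loops identification (the $\E_{n+1}$-analogue of \cref{thm:intro}) we have $C^*(S^n;B) \simeq \Omega^n C^*(S^0;B)$, and \cref{thm:intro2} already tells us this is $\E_n$-trivial. What remains is the $\E_{n+1}$-enhancement of a trivial $\E_n$-algebra on the $B$-module $\Sigma^{-n}B$, which is classified by the degree-$n$ Browder bracket together with its associated top (Dyer--Lashof-type) operation $\theta$ on the fundamental class. Evaluating on the generator of $\pi_{-n}\Sigma^{-n}B \cong \pi_0 B$, both the self-bracket and $\theta$ land again in $\pi_0 B$. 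The crucial structural input, in the spirit of \cref{prop:DL-operation}, is that these operations vanish on any $\E_{n+1}$-trivial algebra; hence exhibiting a single one of them as nonzero forces $C^*(S^n;B)$ to be non-trivial.

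The fourth case is the computational core. Taking $B = A \otimes \F_p$ one computes $\theta$ (or the self-bracket) on the fundamental class using the classical Dyer--Lashof and Browder operations of $\E_{n+1}$-$\F_p$-algebras and checks that it is the nonzero element of $\F_p = \pi_0 B$ whenever $B \neq 0$; taking $B = A \otimes \KU/p$ one repeats the computation with the K-theoretic power operations carried by $\KU/p$, which see the height-$1$ information to which $\F_p$ is blind. I expect this non-vanishing computation to be the principal obstacle. At odd primes and for $n$ even the self-bracket can vanish for parity reasons, so the argument must fall back on a genuine Dyer--Lashof or $\beta$-type operation, and the $\KU/p$ case in particular requires identifying the correct K-theoretic operation and proving it does not vanish; this is exactly why the hypothesis of the fourth case is phrased through both $\F_p$ and $\KU/p$.

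It remains to feed the first three cases into this machine. For a bounded-below $A$ that is not rational, the mod-$p$ homology $A \otimes \F_p$ is nonzero for some prime $p$---otherwise all homotopy groups would be $\Q$-vector spaces and $A$ would be rational---so the second case reduces to the fourth with $B = A \otimes \F_p$. When $A$ underlies an $\E_\infty$-algebra one has the full Dyer--Lashof algebra at every prime available on $C^*(S^n;A)$; non-rationality then already produces a prime at which a Frobenius/$\Sq^0$-type operation on $A$ is nonzero, and I would propagate this through the suspension--loops picture to a nonzero $\E_{n+1}$-obstruction, dispensing with the chromatic restriction of the fourth case. The case $n \le 2$ is the one not captured by heights $0$ and $1$ alone, and I would treat it by an integral analysis of the $\E_2$- and $\E_3$-obstruction: for such small $n$ the relevant bracket and operation land in a single, directly computable homotopy group of $C^*(S^n;A)$, so that non-rationality of $A$ already suffices to make it nonzero. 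This low-dimensional computation, together with the $\KU/p$ non-vanishing in the fourth case, are the two points I expect to require the most care.
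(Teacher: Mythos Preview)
Your overall architecture---base change to a test ring $B$, then detect non-triviality by a power operation that vanishes on trivial $\E_{n+1}$-algebras---matches the paper's treatment of case~\ref{item:thm4}, and your reduction of the bounded-below case~\ref{item:thm2} to~\ref{item:thm4} via $A \otimes \F_p \neq 0$ is exactly the paper's \cref{lemma:implication}. For the $\F_p$ half of case~\ref{item:thm4} the paper uses the operation $Q^0$ on $\pi_{-n}$ (rather than the Browder bracket, which as you correctly note can vanish for parity reasons); for the $\KU/p$ half it constructs truncated $\theta$-operations $\theta^i_r$ and the non-vanishing requires a small nilpotence input from \cite{Hahn2}.

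There are two genuine gaps. For case~\ref{item:thm3} you assert that non-rationality of an $\E_\infty$-ring ``already produces a prime at which a Frobenius/$\Sq^0$-type operation on $A$ is nonzero'', but this fails as stated: there are non-rational $\E_\infty$-rings with $A \otimes \F_p = 0$ for every $p$ (any $K(h)$-local ring, $h \geq 1$), so no mod-$p$ operation sees anything. The paper fills this gap by a nontrivial chromatic argument (\cref{lemma:implication}): if $A \otimes \F_p = 0 = A \otimes \KU/p$ for all $p$, then by \cite{Hahn} all $K(h)$-localizations of $A$ vanish, and the nilpotence theorem of \cite{HS} forces $A$ to be rational. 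Alternatively the paper gives a direct proof of~\ref{item:thm3} via a completely different route using the May nilpotence conjecture \cite{MNN}; either way, the step you elide is precisely the hard one.

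Your handling of case~\ref{item:thm1} is too vague to be a proof, and the paper's argument is not a power-operation computation at all. It introduces a separate \emph{operadic structure map} approach: one analyzes the map $\mu_p^{n+1}\colon \Sigma^n D_p^{\E_{n+1}}(\bS^{-n}) \to \bS^0$, establishes a $p$-local splitting of its cofiber (\cref{lemma:operad-maps}), and deduces that $\E_{n+1}$-triviality of $C^*(S^n;A)$ forces a relation $1 = px + f\theta_n$ in $\pi_0(A)_{(p)}$ with $\theta_n$ a fixed stable class. For $n=1$ the domain of $f$ is zero; for $n=2$ one has $\theta_2 \in \pi_{2p-3}(\bS)$, and \emph{Nishida's nilpotence theorem} makes $f\theta_2$ nilpotent, so $p$ is invertible. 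This operadic route is also what underlies the paper's direct proof of~\ref{item:thm3} (with \cite{MNN} replacing Nishida). The paper further shows that for $n \geq 3$ the class $\theta_n$ is \emph{not} smash-nilpotent (it is detected by $K(1)$), so the method genuinely stops at $n=2$ and your hope that ``non-rationality already suffices'' via a direct integral computation does not extend.
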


We use two approaches to prove \cref{thm:conj-intro}. One is based on studying the operadic structure maps of $C^*(S^n;A)$ and the other is based on power operations. The cases above are not unrelated, indeed \ref{item:thm4} implies \ref{item:thm2} and \ref{item:thm3}, see \cref{lemma:implication}.

Let us now outline our approach to proving \cref{thm:intro2}. The key idea is to show that the $\E_n$-algebra structure of $\Omega A$ only depends on the $\E_{n-1}$-structure rather than the full $\E_n$-structure of $A$. More precisely, we show in \cref{cor:loopsEn} that there exists a commutative diagram as follows.
\[
\begin{tikzcd}
\Alg_{\E_n}^{\mathrm{aug}}(\C) \ar{dr}{\Omega} \ar{d}[swap]{\res_{\E_{n-1}}^{\E_n}}\ar{r}{\omega} & \Alg_{\E_{n+1}}^{\mathrm{aug}}(\C) \ar{d}{\res_{\E_{n}}^{\E_{n+1}}} \\
\Alg_{\E_{n-1}}^{\mathrm{aug}}(\C) \ar{r}[swap]{\omega} & \Alg_{\E_n}^{\mathrm{aug}}(\C).
\end{tikzcd}
\]
Here the vertical functors are the evident forgetful ones. The horizontal functors $\omega$ are less obvious; they can be constructed as the right adjoint to the bar construction. The lower triangle establishes the previously mentioned key idea, and the upper triangles then allows us to inductively deduce that $\Omega^n$ is equivalent to $\omega^n \circ \res^{\E_n}_{\E_0}$. The proof of \cref{thm:intro2} then boils down to relating $\omega^n$ with the trivial algebra functor, which we do in \cref{thm:trivial-algebras}.

We will offer an alternative interpretation of this square in terms of the underlying operads. In Section \ref{sec:stableEn} we show that the existence of the diagram above is essentially equivalent to a diagram of operads in the $\infty$-category of spectra as follows:
\[
\begin{tikzcd}
\Sigma^\infty_+ \E_n^{\mathrm{nu}} \ar{r}\ar{d}\ar{dr}{\sigma} & \Sigma^\infty_+ \E_{n+1}^{\mathrm{nu}} \ar{d} \\
S \Sigma^\infty_+ \E_{n-1}^{\mathrm{nu}} \ar{r} & S \Sigma^\infty_+ \E_n^{\mathrm{nu}}.
\end{tikzcd}
\]
Here the $S$ indicates \emph{operadic suspension} and $\sigma$ denotes the \emph{operadic suspension map}; we discuss both of these concepts in \cref{sec:stableEn} as well. The surprising result is that these two squares are essentially equivalent: passing from the operadic square to a square of algebras is immediate, but the converse is not. Again, we remark that in the $R$-linear context, with $R$ an ordinary commutative ring, a square of operads as above has been constructed by Fresse \cite[Theorem C]{Fresse} using point-set models. In particular his results also provide a proof of \cref{thm:intro2} for $R$-linear categories.

To pass between the two diagrams above, we prove the following result about the relation between operads and monads that could be of independent interest. 

\begin{introthm}
\label{thm:intro3}
The functors 
\[\free\colon \SSeq(\Sp) \lto \End(\Sp) \quad \text{ and } \quad \free \colon \Operad(\Sp) \lto \Monad(\Sp) \]
assigning to a non-unital symmetric sequence (or operad) $\O$ in the $\infty$-category of spectra its corresponding free algebra endo-functor (or monad) $\free_{\O}$ restricts to a fully faithful embedding on the full subcategory containing the non-unital $\E_n$-operads, for $n<\infty$, and their operadic (de)suspensions.
\end{introthm}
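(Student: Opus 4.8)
The plan is to deduce the statement about operads and monads from the corresponding statement about symmetric sequences and endofunctors, and to prove the latter by a layer-by-layer analysis of natural transformations between free functors, using that for the operads in question each $\O(k)$ is \emph{both} a compact (dualizable) and a free (i.e.\ induced) $\Sigma_k$-spectrum. Concretely, I would work with the full subcategory of those $\O$ for which every $\O(k)$ is a compact object of $\Sp^{B\Sigma_k}$ whose underlying $\Sigma_k$-action is free. The configuration space $\E_n^{\nu}(k)\simeq\mathrm{Conf}_k(\R^n)$ is a finite CW complex (a smooth open manifold of finite type) carrying a free $\Sigma_k$-action, so $\Sigma^\infty_+\E_n^{\nu}(k)$ lies in this class \emph{precisely} because $n<\infty$; for $n=\infty$ one has $E\Sigma_k$, which is free but not compact, and indeed $\free$ fails to be fully faithful there (this is the existence of power operations). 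Operadic (de)suspension tensors each $\O(k)$ with an invertible representation sphere $S^{V}$, preserving compactness and freeness, so the whole relevant subcategory lies in this class.

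For the reduction to symmetric sequences, recall that $\free$ is monoidal for the composition products, carrying $\Operad(\Sp)=\Alg(\SSeq(\Sp),\circ)$ to $\Monad(\Sp)=\Alg(\End(\Sp),\circ)$, since $\free_{\O\circ\O'}\simeq\free_\O\circ\free_{\O'}$. Because mapping objects in a category of associative algebras are computed as limits of cosimplicial diagrams built from the composition powers of the underlying objects, full faithfulness of $\free$ at the algebra level follows from full faithfulness on underlying symmetric sequences, provided the relevant class is closed under $\circ$. This closure holds: induction from a Young subgroup carries free to free and preserves compactness, and the composition product of reduced symmetric sequences is a finite sum of such inductions in each arity. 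It therefore suffices to prove that $\free\colon\SSeq(\Sp)\to\End(\Sp)$ is fully faithful on the class above.

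Here I would argue as follows. The canonical map $\Map(\O,\O')\to\Map(\free_\O,\free_{\O'})$ is split injective, a section being given by Goodwillie derivatives, as $\partial_*\free_\O\simeq\O$ for reduced $\O$. Writing $\free_\O\simeq\bigoplus_n D_n^\O$ with $D_n^\O(X)=(\O(n)\otimes X^{\otimes n})_{h\Sigma_n}$ the homogeneous layers, one has $\Map(\free_\O,\free_{\O'})\simeq\prod_n\Map(D_n^\O,\free_{\O'})$, and I would compute each factor through the Goodwillie tower of the target, reducing everything to the layer-to-layer mapping objects $\Map(D_n^\O,D_m^{\O'})$. For $m<n$ these vanish since $D_n^\O$ is $n$-reduced while $D_m^{\O'}$ is $(n-1)$-excisive; for $m=n$ the classification of homogeneous functors identifies them with $\Map_{\Sp^{B\Sigma_n}}(\O(n),\O'(n))$, which assembles over $n$ to $\Map_{\SSeq(\Sp)}(\O,\O')$. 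The entire content is thus the vanishing of the off-diagonal objects for $m>n$, and this is exactly where the two hypotheses enter: freeness of $\O'(m)$ makes $\O'(m)\otimes X^{\otimes m}$ a free $\Sigma_m$-spectrum, so the norm is an equivalence and $D_m^{\O'}\simeq(\O'(m)\otimes(-)^{\otimes m})^{h\Sigma_m}$ is a homotopy fixed-point (limit) functor, into which mapping commutes with the constructions in play; compactness of $\O'(m)$ then lets me pull it out of the natural-transformation object, reducing to $\mathrm{Nat}((-)^{\otimes n},(-)^{\otimes m})\simeq\bigoplus_{\mathrm{Bij}([n],[m])}\bS$. Since there are no natural diagonals on spectra, the cross-effects of a tensor power are concentrated in a single degree, so this vanishes for $m\neq n$, which finishes the assembly.

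The main obstacle I anticipate is making the layer-by-layer reduction rigorous: one must control convergence of the Goodwillie tower of $\free_{\O'}$ and ensure that $\Map(D_n^\O,-)$ commutes with the relevant limit, so that the vanishing of the off-diagonal layers genuinely forces $\Map(D_n^\O,\free_{\O'})\simeq\Map(D_n^\O,D_n^{\O'})$ with no $\lim^1$ or completion defect. The structural input that drives this, and that simultaneously explains the restriction to $n<\infty$, is the norm equivalence for the layers $D_m^{\O'}$, which rests on the dual facts that configuration spaces carry free $\Sigma_k$-actions and, crucially, are finite complexes.
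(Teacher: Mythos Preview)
Your overall architecture matches the paper's: reduce from operads to symmetric sequences via monoidality and closure under $\circ$, then decompose $\Map(\free_\O,\free_{\O'})$ as $\prod_n \Map(D_n^\O,\free_{\O'})$ and show each factor reduces to $\Map(D_n^\O,D_n^{\O'})$. Your observation that the $\Sigma_k$-action on $\E_n^{\mathrm{nu}}(k)\simeq\mathrm{Conf}_k(\R^n)$ is \emph{free} is correct and in fact simpler than what the paper does: the paper isolates a weaker condition (``nilpotent Euler classes'') and verifies it for $\E_n$ via an explicit nowhere-vanishing equivariant section, but freeness already gives nilpotent Euler classes of order $1$, so your hypothesis would slot directly into the paper's framework. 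The compactness hypothesis you impose is not actually needed, and the paper does not use it.

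The genuine gap is exactly the obstacle you flag, and it is not a technicality. Your plan is to ``compute each factor through the Goodwillie tower of the target'', but the usual tower $\free_{\O'}\to P_m\free_{\O'}$ goes the wrong way for mapping \emph{into}: $\free_{\O'}$ is not the limit of that tower (the limit is $\prod_m D_m^{\O'}$, not $\bigoplus_m D_m^{\O'}$), so knowing $\Map(D_n^\O,D_m^{\O'})=0$ for each $m>n$ individually does not let you conclude anything about $\Map(D_n^\O,\bigoplus_{m>n} D_m^{\O'})$. The paper's resolution is to use \emph{dual} Goodwillie calculus: one has $\Map(D_n^\O,\free_{\O'})\simeq\Map(D_n^\O,P^n\free_{\O'})$ by the universal property of the co-tower $P^n F\to F$, and the substantive lemma is that $P^n\free_{\O'}\simeq\bigoplus_{k\leq n} D_k^{\O'}$ under the Euler-class hypothesis. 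Your freeness assumption makes that lemma easier (the transition maps in the relevant inverse system are literally null rather than eventually null), but you still need the dual-calculus framework to set it up. Note also that your claim $\mathrm{Nat}((-)^{\otimes n},(-)^{\otimes m})=0$ for $n<m$ already requires exactly this kind of argument: it amounts to $P^n((-)^{\otimes m})=0$, which one proves by showing the colinearization tower has vanishing limit. So the missing ingredient is not a convergence nuisance but the central mechanism of the proof.
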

In fact, we single out a property of symmetric sequences, which we call having \emph{nilpotent Euler classes} and show that the free functor is fully faithful on symmetric sequences and operads with nilpotent Euler classes, and that the $\E_n$-operads as well as their operadic (de)suspensions have nilpotent Euler classes, see \cref{thm:SymffEuler}.

Finally, we outline the plan of this paper. In \cref{sec:susploopEn} we prove the basic results on loops and suspensions of $\E_n$-algebras, in particular establishing \cref{thm:intro2} and a dual version for $n$-fold suspensions. In \cref{sec:stableEn} we investigate the relation between operads and monads on the $\infty$-category $\Sp$ and establish \cref{thm:intro3}. The final \cref{sec:trivspheres} is focused on the cochain algebras of spheres and is mostly devoted to the inspection of our \hyperlink{conj}{Triviality Conjecture}.

\section{Suspensions and loops of $\E_n$-algebras}
\label{sec:susploopEn}
\subsection{Preliminaries}
Let us denote by $\Cat^\wc$ the $\infty$-category of $\infty$-categories which admit weakly contractible colimits and functors which preserve weakly contractible colimits. Via Lurie's tensor product \cite[\S 4.8.1]{HA}, $\Cat^\wc$ acquires a symmetric monoidal structure. Throughout this section, we let $\C$ be an object of $\Alg_{\E_n}(\Cat^\wc)$ and sometimes refer to it as an $\E_n$-monoidal category with weakly contractible colimits. By definition, the tensor product of $\C$ preserves weakly contractible colimits in each variable. We denote the monoidal unit in $\C$ by $\one$ and write $\Alg_{\E_n}^\aug(\C) = \Alg_{\E_n}(\C)_{/\one}$ for the category of augmented $\E_n$-algebras. This is a pointed category, with zero object given by the monoidal unit $\one$ of $\C$. Moreover, Dunn additivity \cite[Theorem 5.1.2.2]{HA} induces the equivalence $\Alg_{\E_n}^{(\aug)}(\C) \simeq \Alg_{\E_1}^{(\aug)}(\Alg_{\E_{n-1}}(\C))$.
We will need the following lemma.
\begin{lemma}\label{lemma:algebras}
Let $n\geq 1$ and $\C$ be in $\Alg_{\E_n}(\Cat^\wc)$ and $0< k<n$. Then $\Alg_{\E_{k}}(\C)$ is in $\Alg_{\E_{n-k}}(\Cat^\wc)$. Moreover, the functor $\res_{\E_{n-1}}^{\E_n}$ admits a left adjoint $\free_{\E_{n-1}}^{\E_n} \colon  \Alg_{\E_{n-1}}^{\aug}(\C) \rightarrow \Alg_{\E_{n}}^{\aug}(\C)$.
\end{lemma}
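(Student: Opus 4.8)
The plan is to prove the two assertions separately, deducing the ``moreover'' part from the first one via Dunn additivity. For the first assertion, I would use Dunn additivity in the form $\E_n \simeq \E_k \otimes \E_{n-k}$ to rewrite the ambient category as $\Alg_{\E_n}(\Cat^\wc) \simeq \Alg_{\E_{n-k}}(\Alg_{\E_k}(\Cat^\wc))$, so that $\C$ is promoted to an $\E_{n-k}$-algebra object $\tilde\C$ in the $\infty$-category $\Alg_{\E_k}(\Cat^\wc)$ of $\E_k$-monoidal categories with weakly contractible colimits. I would then invoke the functor $\Alg_{\E_k}(-)\colon \Alg_{\E_k}(\Cat^\wc) \to \Cat^\wc$ and argue it is lax symmetric monoidal for Lurie's tensor product, the lax structure maps being the external products $(A,B)\mapsto A\boxtimes B$ of $\E_k$-algebras: these preserve weakly contractible colimits separately in each variable and hence factor through the tensor product $\Alg_{\E_k}(\mathcal X)\otimes\Alg_{\E_k}(\mathcal Y)$ by the universal property of Lurie's tensor product. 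Since lax symmetric monoidal functors carry $\E_{n-k}$-algebras to $\E_{n-k}$-algebras, evaluating on $\tilde\C$ exhibits $\Alg_{\E_k}(\C)$ as an object of $\Alg_{\E_{n-k}}(\Cat^\wc)$. The point requiring care here is that $\Alg_{\E_k}(\mathcal X)$ again admits weakly contractible colimits when $\mathcal X$ does, which uses that the forgetful functor creates sifted colimits (in particular geometric realizations) together with the monoidal structure, so that $\Alg_{\E_k}(-)$ genuinely lands in $\Cat^\wc$.

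For the ``moreover'' part I would reduce everything to a single free-algebra construction. Applying Dunn additivity as $\E_n \simeq \E_{n-1}\otimes\E_1$ gives $\Alg_{\E_n}(\C) \simeq \Alg_{\E_1}(\D)$ with $\D := \Alg_{\E_{n-1}}(\C)$, and by the first part (with $k = n-1$) the category $\D$ is a monoidal $\infty$-category with weakly contractible colimits. Under this identification $\res_{\E_{n-1}}^{\E_n}$ becomes the forgetful functor $\Alg_{\E_1}(\D)\to\D$, and passing to augmented objects it becomes the forgetful functor $\Alg_{\E_1}(\D_{/\one}) \simeq \Alg_{\E_1}^{\aug}(\D) \to \D_{/\one}$, using the standard identification of augmented $\E_1$-algebras in $\D$ with $\E_1$-algebras in the monoidal slice $\D_{/\one}$. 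Thus it suffices to produce a left adjoint to the forgetful functor $\Alg_{\E_1}(\mathcal E)\to\mathcal E$ for $\mathcal E = \D_{/\one}$, that is, the free (tensor) algebra functor $X\mapsto \coprod_{m\ge 0} X^{\otimes m}$.

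The hard part, and the only genuine obstacle, is that $\mathcal E$ inherits only weakly contractible colimits, whereas the tensor algebra requires coproducts; since the indexing diagram of a coproduct over a set of cardinality $\ge 2$ is not weakly contractible, the naive formula is unavailable and the adjoint functor theorem does not directly apply. The observation that resolves this is that $\mathcal E = \D_{/\one}$ is in fact \emph{pointed}: the monoidal unit $\one$ is initial in $\D = \Alg_{\E_{n-1}}(\C)$, so the object $\one \xrightarrow{\id} \one$ is simultaneously initial and terminal, i.e.\ a zero object. In a pointed $\infty$-category every coproduct can be rebuilt from weakly contractible colimits: a binary coproduct is the pushout $X \sqcup_{\one} Y$ over the zero object, whose indexing diagram is weakly contractible, and arbitrary coproducts are filtered colimits of finite ones. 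Because the induced monoidal structure on $\D_{/\one}$ still preserves weakly contractible colimits separately in each variable, it preserves these coproducts, so the tensor algebra monad exists and computes the free $\E_1$-algebra.

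This identifies $\free_{\E_{n-1}}^{\E_n}$ with the tensor algebra functor composed with the equivalences above, giving the desired left adjoint to $\res_{\E_{n-1}}^{\E_n}$. The base case $n = 1$, where the first part does not apply, is handled directly by the same construction, using only that $\C$ itself is monoidal with weakly contractible colimits so that the relevant augmented category is again pointed and the tensor algebra can be formed there. I expect the routine verifications to be the lax monoidality of $\Alg_{\E_k}(-)$ and the compatibility of the slice monoidal structure with weakly contractible colimits, while the conceptual crux is precisely the pointedness of the augmented categories, which is what makes weakly contractible colimits suffice.
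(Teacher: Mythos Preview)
Your proposal is correct and follows essentially the same route as the paper. The paper simply cites \cite[Prop.~5.1.2.9]{HA} for the first assertion and \cite[Remark~5.2.2.10]{HA} for the existence of the left adjoint $\Alg_{\E_1}(\Alg_{\E_{n-1}}(\C)) \to \Alg_{\E_0}(\Alg_{\E_{n-1}}(\C))$, then slices over $\one$ to pass to augmented objects; your argument unpacks precisely the content of these citations. The only cosmetic difference is that the paper constructs the non-augmented adjoint via $\Alg_{\E_0}(\D) = \D_{\one/}$ and then slices, whereas you pass directly to $\D_{/\one}$; in both cases the decisive point is that the relevant category acquires an initial (indeed zero) object, so that weakly contractible colimits suffice to build the coproducts needed for the tensor algebra.
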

\begin{proof}
The first statement is \cite[Prop.\ 5.1.2.9]{HA}. For the second, \cite[Remark 5.2.2.10]{HA} then implies that the forgetful functor $\Alg_{\E_1}(\Alg_{\E_{n-1}}(\C)) \to \Alg_{\E_0}(\Alg_{\E_{n-1}}(\C))$ admits a left adjoint. This adjunction induces the required adjunction on augmented objects, using that also $\Alg^\aug_{\E_0}(\Alg_{\E_{n-1}}(\C)) \simeq \Alg^\aug_{\E_{n-1}}(\C)$.
\end{proof}
We note that, as a consequence, $\Alg_{\E_n}^\aug(\C)$ is cocomplete since it admits weakly contractible colimits, as well as an initial object.\footnote{This fact is also true for $n=0$.} In particular, it admits a suspension functor which we denote by $\Sigma_{\E_n}$ and warn the reader that it is not compatible with the forgetful functors $\Alg_{\E_n}^\aug(\C) \to \Alg_{\E_k}^\aug(\C)$ for $k<n$.

\begin{lemma}\label{lemma:Bar}
For $A$ in $\Alg_{\E_n}^\aug(\C)$, the relative tensor product $\one \otimes_A \one$ exists and its formation provides a functor 
\[ \Alg_{\E_n}^\aug(\C) \lto \Alg_{\E_{n-1}}^\aug(\C), \quad A \mapsto \Barc(A) := \one\otimes_A \one \]
called the Bar construction. 
\end{lemma}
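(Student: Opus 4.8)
The plan is to identify $\Barc(A)$ with a relative tensor product and to compute the latter by a two-sided bar complex, using Dunn additivity to account for the $\E_{n-1}$-algebra structure on the output.

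First I would reduce to the case $n=1$. Set $\D := \Alg_{\E_{n-1}}(\C)$, where for $n=1$ we read this as $\D = \C$. For $n \geq 2$, \cref{lemma:algebras} (with $k = n-1$) shows that $\D$ lies in $\Alg_{\E_1}(\Cat^\wc)$, and for $n=1$ this holds by hypothesis; in either case $\D$ is a monoidal $\infty$-category admitting weakly contractible colimits whose tensor product preserves them in each variable. Under Dunn additivity the augmented $\E_n$-algebra $A$ becomes an augmented $\E_1$-algebra in $\D$, and $\Alg_{\E_{n-1}}^\aug(\C)$ is identified with the slice $\D_{/\one}$ (for $n=1$ one uses in addition the canonical section of the Bar construction to land in pointed objects). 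It therefore suffices to produce, for such a $\D$ and any $A \in \Alg_{\E_1}^\aug(\D)$, the relative tensor product $\one \otimes_A \one$ in $\D$, functorially in $A$ and compatibly with an augmentation to $\one$.

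Next I would construct the relative tensor product via the bar complex. Using the augmentation $A \to \one$ together with the unit, the object $\one$ becomes an $A$-bimodule, and $\one \otimes_A \one$ is the geometric realization of the two-sided bar complex $\Barc_\bullet(\one, A, \one)$ with $[k] \mapsto \one \otimes A^{\otimes k} \otimes \one \simeq A^{\otimes k}$. Since $\Delta^{\op}$ is weakly contractible, geometric realizations are weakly contractible colimits; they exist in $\D$ and are preserved by $\otimes_\D$ in each variable. Lurie's theory of relative tensor products \cite[\S 4.4]{HA} then applies and guarantees that $\one \otimes_A \one$ exists and is computed by this realization. The key point is that this colimit is taken inside $\D = \Alg_{\E_{n-1}}(\C)$, so its value is automatically an object of $\D$, i.e.\ carries a canonical $\E_{n-1}$-algebra structure; this is the conceptual reason the Bar construction lands in $\Alg_{\E_{n-1}}^\aug(\C)$. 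The augmentation is obtained by observing that $\one$ is terminal in $\Alg_{\E_1}^\aug(\D)$ with $\Barc(\one) \simeq \one \otimes_\one \one \simeq \one$, so that functoriality applied to $A \to \one$ yields a natural map $\Barc(A) \to \one$ lifting $\Barc$ to a functor valued in $\D_{/\one} \simeq \Alg_{\E_{n-1}}^\aug(\C)$.

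The main obstacle is the coherent functoriality of $A \mapsto \one \otimes_A \one$ as a functor of $\infty$-categories, rather than merely on objects. This includes regarding $\one$ as an $A$-bimodule coherently as $A$ varies and producing the natural augmentation; writing down face and degeneracy maps by hand is not an option in the $\infty$-categorical setting. I would handle this by invoking Lurie's relative tensor product and bar construction machinery \cite[\S 4.4]{HA}, which produces these as functors on appropriate categories of (bi)modules; the remaining task is then the bookkeeping of arranging our hypotheses so that this machinery applies and of restricting the resulting functor along the augmentation.
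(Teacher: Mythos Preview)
Your proposal is correct and follows essentially the same approach as the paper: reduce to $\E_1$-algebras in $\D = \Alg_{\E_{n-1}}(\C)$ via Dunn additivity and \cref{lemma:algebras}, observe that $\D$ has geometric realizations (since $\Delta^{\op}$ is weakly contractible), invoke Lurie's bar construction from \cite[\S 4.4.2]{HA}, and obtain the augmentation from $\Barc(\one)\simeq\one$. The paper's proof is simply a terser version of what you wrote; your additional discussion of coherent functoriality and the bar complex is accurate elaboration rather than a different strategy.
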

\begin{proof}
We note again the equivalence $\Alg_{\E_n}^{(\aug)}(\C) \simeq \Alg_{\E_1}^{(\aug)}(\Alg_{\E_{n-1}}(\C))$ and that $\Alg_{\E_{n-1}}(\C)$ is a monoidal category with weakly contractible colimits by \cref{lemma:algebras}. In particular, $\Alg_{\E_{n-1}}(\C)$ is a monoidal category with geometric realizations, so \cite[\S 4.4.2]{HA} provides a functor $\Barc \colon \Alg_{\E_n}(\C) \to \Alg_{\E_{n-1}}(\C)$ which induces the required functor on augmented objects as clearly $\Barc(\one) \simeq \one$.
\end{proof}

Finally, we will also need the following lemma.
\begin{lemma}\label{lemma:induced-map-tensor-product}
Let $\C$ be an $\E_n$-monoidal category with geometric realizations. Then a commutative square
\[\begin{tikzcd}
	A \ar[r] \ar[d] & B \ar[d] \\
	A' \ar[r] & B'
\end{tikzcd}\]
in $\Alg_{\E_n}^{(\aug)}(\C)$ induces a canonical map $A' \otimes_A B \to B'$ in $\Alg_{\E_{n-1}}^{(\aug)}(\C)$.
\end{lemma}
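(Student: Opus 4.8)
The plan is to reduce to the associative ($\E_1$) case via Dunn additivity and then recognize the desired map as the one induced on two-sided bar constructions by the given commutative square. First I would invoke the equivalence $\Alg_{\E_n}^{(\aug)}(\C) \simeq \Alg_{\E_1}^{(\aug)}(\D)$ with $\D := \Alg_{\E_{n-1}}(\C)$. By \cref{lemma:algebras}, $\D$ is an $\E_1$-monoidal category with weakly contractible colimits, in particular with geometric realizations. Under this equivalence the square becomes a commutative square of $\E_1$-algebras in $\D$. The map $A \to A'$ exhibits $A'$ as a right $A$-module (restricting the regular right $A'$-action), and $A \to B$ exhibits $B$ as a left $A$-module; hence the relative tensor product $A' \otimes_A B$ is defined as an object of $\D$ by \cite[\S 4.4.2]{HA}. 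Since objects of $\D$ are exactly $\E_{n-1}$-algebras in $\C$, this already identifies $A' \otimes_A B$ as an object of $\Alg_{\E_{n-1}}(\C)$, consistent with \cref{lemma:Bar} (which is the special case $A' = B = \one$).

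Next I would construct the map by exhibiting the square as a morphism of module triples into the trivial triple on $B'$. Concretely, I compare the triple $(A', A, B)$ --- a right module, an algebra, and a left module --- with the triple $(B', B', B')$ consisting of $B'$ together with its regular left and right actions. The algebra maps $A' \to B'$ and $B \to B'$ become right-, resp.\ left-$A$-linear once $B'$ is regarded as an $A$-module along the composite $A \to A' \to B'$; here the commutativity of the square is precisely what identifies this composite with $A \to B \to B'$ and renders the two module structures compatible. Functoriality of the two-sided bar construction then yields a map of simplicial objects $\Barc_\bullet(A', A, B) \to \Barc_\bullet(B', B', B')$, and composing with the canonical augmentation $\Barc_\bullet(B',B',B') \to B'$ and passing to geometric realizations produces
\[ A' \otimes_A B \;\longrightarrow\; B' \otimes_{B'} B' \;\simeq\; B' \]
in $\D = \Alg_{\E_{n-1}}(\C)$, as desired. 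The augmented variant is obtained by carrying the augmentations to $\one$ along the same construction, using $\Barc_\bullet(\one,\one,\one) \simeq \one$.

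The main point requiring care is that this construction be coherent, that is, carried out at the level of $\infty$-categories rather than homotopy categories. I expect the genuine work to lie in organizing the morphism of triples as an honest morphism in Lurie's $\infty$-category of modules over varying algebras, so that it functorially induces the map of bar simplicial objects; once the two-sided bar construction is available as a functor on such morphisms, the remaining steps are formal, and no computation is involved. An entirely equivalent and perhaps more economical route is to observe directly that the constant simplicial object $B'$ receives a cocone from $\Barc_\bullet(A', A, B)$, assembled from the three algebra maps together with the multiplication of $B'$ and compatible across faces and degeneracies, and then to define the map via the universal property of the geometric realization $A' \otimes_A B = \mathrm{colim}_{\Delta^{\op}} \Barc_\bullet(A',A,B)$.
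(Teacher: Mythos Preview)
Your argument is correct, but the paper takes a different route. Rather than building the map from a morphism of bar simplicial objects, the paper works with module categories and an adjunction: the base change $-\otimes_A B \colon \RMod_A(\C) \to \RMod_B(\C)$ is left adjoint to restriction, and since the map $A \to B$ is a map of $\E_n$-algebras this functor inherits an $\E_{n-1}$-monoidal structure, hence induces a left adjoint on $\Alg_{\E_{n-1}}(\RMod_A(\C)) \to \Alg_{\E_{n-1}}(\RMod_B(\C))$. The map $A' \to B'$ then lives in $\Map_{\Alg_{\E_{n-1}}(\RMod_A(\C))}(A',B')$, and the adjunction transposes it to the desired map $A' \otimes_A B \to B'$ in $\Alg_{\E_{n-1}}(\RMod_B(\C))$; one then forgets down to $\Alg_{\E_{n-1}}(\C)$ via the unit of $B$. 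Your approach is more explicit and stays closer to the bar construction, at the cost of having to organize the functoriality of $\Barc_\bullet(-,-,-)$ in morphisms of triples with varying base algebra, as you note. The paper's approach packages this coherence into the existence of the $\E_{n-1}$-monoidal left adjoint and the adjunction identity, which makes the construction a one-liner once that machinery is in place and also makes naturality in maps of squares (hence the augmented version) immediate. One small quibble: your appeal to \cref{lemma:algebras} assumes weakly contractible colimits, whereas the lemma only hypothesizes geometric realizations; this is harmless since the only colimits you use are geometric realizations, but you should state directly that $\D = \Alg_{\E_{n-1}}(\C)$ is $\E_1$-monoidal with geometric realizations rather than invoke that lemma.
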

\begin{proof}
The assumptions on $\C$ imply that the relative tensor product can be formed and that it provides a functor $- \otimes_A B \colon \RMod_A(\C) \to \RMod_B(\C)$ left adjoint to the forgetful functor. This functor in addition inherits the structure of an $\E_{n-1}$-monoidal functor from the $\E_n$-structure on the map $A \to B$. It consequently induces a left adjoint $-\otimes_A B \colon \Alg_{\E_{n-1}}(\RMod_A(\C)) \to \Alg_{\E_{n-1}}(\RMod_B(\C))$. There is therefore an equivalence
\[ \Map_{\Alg_{\E_{n-1}}(\RMod_B(\C))}(A'\otimes_A B,B') \simeq \Map_{\Alg_{\E_{n-1}}(\RMod_A(\C))}(A',B') \]
and the latter space contains the map $A' \to B'$ part of the above diagram. We then use the forgetful functor $\Alg_{\E_{n-1}}(\RMod_B(\C)) \to \Alg_{\E_{n-1}}(\C)$ induced from the unit of $B$ and arrive at the claimed canonical map. This construction is natural in maps of squares, which shows that the just constructed map is compatible with augmentations.
\end{proof}

\subsection{Suspensions and loops of $\E_n$-algebras}
The starting point of our paper is the following result.

\begin{theorem}
\label{thm:suspensionEn}
Let $n\geq 0$ and $\C$ be in $\Alg_{\E_{n+1}}(\Cat^\wc)$. Then there is a natural commutative diagram of colimit preserving functors as follows:
\[
\begin{tikzcd}
\Alg_{\E_n}^{\mathrm{aug}}(\C) \ar{dr}{\Sigma_{\E_n}} \ar{r}{\free_{\E_n}^{\E_{n+1}}}\ar{d}[swap]{\Barc} & \Alg_{\E_{n+1}}^{\mathrm{aug}}(\C) \ar{d}{\Barc} \\
\Alg_{\E_{n-1}}^{\mathrm{aug}}(\C) \ar{r}[swap]{\free_{\E_{n-1}}^{\E_{n}}} & \Alg_{\E_n}^{\mathrm{aug}}(\C)
\end{tikzcd}
\]
Here, we assume $n \geq 1$ for the lower-left triangle.
\end{theorem}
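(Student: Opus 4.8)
The plan is to prove the two triangles separately; the outer square then follows by pasting them along the common diagonal $\Sigma_{\E_n}$ (and this pasting, like the lower-left triangle, requires $n\geq 1$). Throughout I would use Dunn additivity to trade the increment in the $\E$-direction for a passage to algebras. Writing $\mathcal{E}=\Alg_{\E_n}(\C)$ and $\D=\Alg_{\E_{n-1}}(\C)$, both of which are monoidal categories with weakly contractible colimits by \cref{lemma:algebras}, we have $\Alg_{\E_n}^\aug(\C)\simeq\mathcal{E}_{/\one}\simeq\Alg_{\E_0}^\aug(\mathcal{E})$ as well as $\Alg_{\E_n}^\aug(\C)\simeq\Alg_{\E_1}^\aug(\D)$. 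Under these identifications (exactly as in the proof of \cref{lemma:algebras}) both $\free_{\E_n}^{\E_{n+1}}$ and $\free_{\E_{n-1}}^{\E_n}$ become the tensor-algebra functor $\free_{\E_0}^{\E_1}$, the functors $\Barc$ become the associative bar construction of \cref{lemma:Bar}, and $\Sigma_{\E_n}$ becomes the suspension in the relevant pointed category. In this way the upper-right triangle is identified with the assertion $\Barc\circ\free_{\E_0}^{\E_1}\simeq\Sigma$ on $\Alg_{\E_0}^\aug(\mathcal{E})$, and the lower-left triangle with $\free_{\E_0}^{\E_1}\circ\Barc\simeq\Sigma$ on $\Alg_{\E_1}^\aug(\D)$.

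The computational heart is the single statement that, in any monoidal category $\M$ with weakly contractible colimits of this kind, the bar construction of a free associative algebra is a suspension:
\[ \Barc(\free_{\E_0}^{\E_1} M)\simeq \Sigma M \quad\text{in }\Alg_{\E_0}^\aug(\M),\qquad M\in\Alg_{\E_0}^\aug(\M), \]
naturally in $M$. I would prove this from the fact that over the tensor algebra $T:=\free_{\E_0}^{\E_1}M$ the augmentation module $\one$ admits the two-term (Koszul) resolution $T\otimes M\to T\to\one$, whose first map is multiplication along the canonical generator map $M\to T$. Applying $\one\otimes_T(-)$ sends $T\mapsto\one$ and $T\otimes M\mapsto M$, and collapses the resolution to the map $M\to\one$ induced by the augmentation; as this map factors through the augmentation ideal it is null, so $\Barc(T)$ is the cofiber of a null map $M\to\one$, which in the pointed category $\Alg_{\E_0}^\aug(\M)$ is precisely the pushout $\one\sqcup_M\one=\Sigma M$. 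Taking $\M=\mathcal{E}$ this already supplies the upper-right triangle.

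For the lower-left triangle I would first produce the comparison map and then check it is an equivalence. Applying \cref{lemma:induced-map-tensor-product} to the defining pushout square of $\Sigma_{\E_n}A=\one\sqcup_A\one$ yields a natural map $\Barc(A)=\one\otimes_A\one\to\res_{\E_{n-1}}^{\E_n}(\Sigma_{\E_n}A)$ in $\Alg_{\E_{n-1}}^\aug(\C)$, which by the adjunction of \cref{lemma:algebras} transposes to a natural map $\free_{\E_{n-1}}^{\E_n}(\Barc A)\to\Sigma_{\E_n}A$. To see that it is an equivalence, note that the free functors and $\Sigma_{\E_n}$ preserve weakly contractible colimits because they are left adjoints, while $\Barc$ preserves geometric realizations since the tensor product preserves them in each variable; moreover every augmented $\E_1$-algebra is a geometric realization of free algebras via its monadic bar resolution \cite{HA}. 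It therefore suffices to check the comparison on free algebras $A=\free_{\E_0}^{\E_1}M$, where the core computation gives $\Barc(A)\simeq\Sigma M$ and hence $\free_{\E_0}^{\E_1}(\Barc A)\simeq\free_{\E_0}^{\E_1}(\Sigma M)\simeq\Sigma(\free_{\E_0}^{\E_1}M)=\Sigma_{\E_n}A$, the middle equivalence because $\free_{\E_0}^{\E_1}$ preserves suspensions. Pasting the two triangles then gives the outer square for $n\geq 1$.

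The main obstacle I anticipate is twofold, and both points stem from working only with weakly contractible (rather than all finite) colimits. First, because $\C$ need not be stable, the two-term Koszul resolution underpinning the core computation cannot be invoked as a stable cofiber sequence: it must be set up inside the pointed $\infty$-category $\Alg_{\E_0}^\aug(\M)$, where the cofiber of the null map $M\to\one$ is genuinely the pushout $\one\sqcup_M\one$, and both the resolution and the resulting identification must be exhibited as natural transformations of $\infty$-functors rather than objectwise. Second, the reduction to free algebras requires these naturality and colimit-preservation statements to be coherent equivalences; the most delicate bookkeeping is to verify that the objectwise comparison map supplied by \cref{lemma:induced-map-tensor-product} assembles into the claimed natural equivalence of colimit-preserving functors. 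The computation on free algebras itself, by contrast, is the clean and essentially formal part.
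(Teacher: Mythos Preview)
Your overall architecture matches the paper's: prove the two triangles separately, construct the lower-left comparison map via \cref{lemma:induced-map-tensor-product}, and verify it on free algebras using that every augmented $\E_1$-algebra is a sifted colimit of free ones. That part is fine and essentially identical to what the paper does.

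The genuine gap is in your treatment of the upper-right triangle. You propose to prove $\Barc(\free_{\E_0}^{\E_1}M)\simeq\Sigma M$ via a two-term Koszul resolution $T\otimes M\to T\to\one$ of the augmentation module. This argument is correct when $\M$ is stable, but the theorem does not assume stability, and in the unstable setting the sequence $T\otimes M\to T\to\one$ is simply not a cofiber sequence in $\LMod_T(\M)$ (indeed $\LMod_T(\M)$ need not even be pointed). Your own caveat about ``setting it up inside the pointed $\infty$-category $\Alg_{\E_0}^\aug(\M)$'' does not rescue this: the resolution lives in $T$-modules, not in $\M_{\one//\one}$, and there is no evident way to transport the Koszul argument there. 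The paper sidesteps this entirely by invoking \cite[Cor.~5.2.2.13]{HA}, which is precisely the statement that the composite $\Barc\circ\free_{\E_0}^{\E_1}$ is $C\mapsto\one\amalg_C\one$; Lurie's proof of that result proceeds via an extra-degeneracy argument on the bar complex, not via a resolution, and works in the generality you need. You should cite this result rather than attempt the Koszul route.

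A second, smaller omission: the theorem asserts that all functors in the diagram are colimit-preserving, and you only observe that $\Barc$ preserves geometric realizations. The paper separately argues that $\Barc$ preserves finite coproducts (reducing again to free algebras and using the already-established upper-right triangle), which together with sifted colimit preservation gives all small colimits. You should include this step.
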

\begin{remark}
The proof we give in fact shows that for the lower-left triangle to commute as claimed, it suffices that $\C$ is $\E_n$-monoidal. 
\end{remark}

\begin{proof}[Proof of \cref{thm:suspensionEn}]
We treat the upper-right triangle first. By \cref{lemma:algebras}, the $\infty$-category $\Alg_{\E_n}(\C)$ lies in $\Alg_{\E_1}(\Cat^\wc)$. 
Using the canonical equivalences $\Alg_{\E_n}^\aug(\C) \simeq \Alg_{\E_n}(\C)_{\mathbf{1} // \mathbf{1}}$ as well as $\Alg_{\E_1}^\aug(\Alg_{\E_n}(\C)) \simeq \Alg^\aug_{\E_{n+1}}(\C)$ for all $n\geq 0$, \cite[Cor.\ 5.2.2.13]{HA} says that the composite
\[ \Alg_{\E_n}^\aug(\C) \xrightarrow{\free_{\E_n}^{\E_{n+1}}} \Alg_{\E_{n+1}}^\aug(\C) \xrightarrow{\Barc} \Alg_{\E_n}(\C) \]
is given by the functor which takes an object $C$ to the pushout $\mathbf{1} \amalg_C \mathbf{1}$. We note that the composite above canonically factors through $\Alg_{\E_n}^\aug(\C)$ since $\Barc(\mathbf{1}) = \mathbf{1}$. The same formula then remains true for this refined functor because the forgetful functor is conservative and preserves pushouts. Since $\Alg_{\E_n}^\aug(\C)$ is a pointed category, with terminal object $\mathbf{1}$, the functor $C \mapsto \mathbf{1} \amalg_C \mathbf{1}$ is the suspension functor $\Sigma_{\E_n}$, showing the commutativity of the upper-right triangle.

Now let us show that all functors in question preserve colimits. First, in any pointed and cocomplete $\infty$-category, the suspension functor commutes with colimits, hence $\Sigma_{\E_n}$ preserves colimits. Moreover, for all $k\geq 1$, the functor $\free_{\E_{k-1}}^{\E_k}$ is a left adjoint and hence also preserves colimits. It then suffices to show that for any $k\geq 1$, the functor $\Barc \colon \Alg^\aug_{\E_k}(\C) \to \Alg^\aug_{\E_{k-1}}(C)$ preserves small colimits. By construction, $\Barc$ preserves sifted colimits, so in fact it suffices to show that it preserves the initial object and binary coproducts. The initial object is given by the tensor unit $\mathbf{1}$ and which is clearly preserved by $\Barc$. Next, we wish to show that for any two objects $A,B$ of $\Alg^\aug_{\E_k}(\C)$, the canonical map 
\[ \Barc(A) \amalg \Barc(A') \to \Barc(A \amalg A') \]
is an equivalence. Since $\Barc$ preserves sifted colimits and every augmented $\E_k$-algebra is a sifted colimit of algebras of the form $\free_{\E_{k-1}}^{\E_k}(B)$ \cite[Prop.\ 4.7.3.14]{HA} (to apply this proposition, note that the forgetful functor $\Alg^\aug_{\E_{k}}(\C) \to \Alg^\aug_{\E_{k-1}}(\C)$ is conservative and preserves sifted colimits), it suffices to show that the canonical map
\[ \Barc(\free_{\E_{k-1}}^{\E_k}(B)) \amalg \Barc(\free_{\E_{k-1}}^{\E_k}(B')) \lto \Barc(\free_{\E_{k-1}}^{\E_k}(B \amalg B')) \]
is an equivalence. By the already established commutativity of the upper-right triangle, this map is equivalent to the canonical map
\[ \Sigma_{\E_k}(B) \amalg \Sigma_{\E_k}(B') \lto \Sigma_{\E_k}(B\amalg B') \]
which is an equivalence since $\Sigma_{\E_k}$ preserves colimits as we have already argued.

Finally, we show that the lower-left triangle commutes. To do so, we first observe that the commutative (in fact pushout) diagram in $\Alg^\aug_{\E_n}(\C)$ 
\[\begin{tikzcd}
	A \ar[r] \ar[d] & \mathbf{1} \ar[d] \\ \mathbf{1} \ar[r] & \Sigma_{\E_n}(A) 
\end{tikzcd}\]
 induces a canonical map $\Barc(A) = \mathbf{1} \otimes_A \mathbf{1} \to \Sigma_{\E_n}(A)$ in $\Alg^\aug_{\E_{n-1}}(\C)$, see \cref{lemma:induced-map-tensor-product} used for the above square for $A$, for $\mathbf{1}$, and the canonical map from the former to the latter. By adjunction, we obtain a natural map $\free_{\E_{n-1}}^{\E_n}(\Barc(A)) \to \Sigma_{\E_n}(A)$. Source and target of this natural map are colimit preserving functors, as we have just argued. The same reasoning as above implies that it therefore suffices to show that this map is an equivalence in case $A = \free_{\E_{n-1}}^{\E_n}(B)$ in which case we may again use the established commutativity of the upper-right triangle for $n-1$ in place of $n$. The map under investigation then becomes equivalent to the canonical map 
\[ \free_{\E_{n-1}}^{\E_n}(\Sigma_{\E_{n-1}}(B)) \lto \Sigma_{\E_n}(\free_{\E_{n-1}}^{\E_n}(B)) \]
which is an equivalence because the free functor preserves colimits and therefore commutes with suspensions.
\end{proof}

Applying Theorem \ref{thm:suspensionEn} several times we obtain the following:

\begin{corollary}
\label{cor:suspensionEn}
Let $\C$ be as in Theorem \ref{thm:suspensionEn} and let $A$ be in $\Alg_{\E_n}^\aug(\C)$. Then there are canonical equivalences 
\[ \Barc^k(\free_{\E_n}^{\E_{n+k}}(A)) \simeq \Sigma^k_{\E_n}(A) \simeq \free_{\E_{n-k}}^{\E_n}(\Barc^k(A))\]
where we assume $k\leq n$ for the right hand equivalence. In particular, the $\E_n$-algebra $\Sigma^n_{\E_n}(A)$ is in the image of the functor $\free_{\E_0}^{\E_n}\colon \Alg_{\E_0}^\aug(\C) \to \Alg_{\E_n}^\aug(\C)$.
\end{corollary}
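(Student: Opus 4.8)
The plan is to deduce both chains of equivalences by iterating \cref{thm:suspensionEn}, treating the two equivalences separately by induction on $k$. The two triangles of that theorem furnish the identities $\Barc \circ \free_{\E_m}^{\E_{m+1}} \simeq \Sigma_{\E_m}$ (upper-right) and $\free_{\E_{m-1}}^{\E_m} \circ \Barc \simeq \Sigma_{\E_m}$ (lower-left, for $m \geq 1$) at every admissible level $m$, and I will feed these into the induction one level at a time. I will also use two formal facts, both already available: that the free functors compose as $\free_{\E_a}^{\E_c} \simeq \free_{\E_b}^{\E_c} \circ \free_{\E_a}^{\E_b}$ (as left adjoints to the composable forgetful functors), and that each $\free_{\E_m}^{\E_{m+1}}$ commutes with suspension, i.e.\ $\free_{\E_m}^{\E_{m+1}} \circ \Sigma_{\E_m} \simeq \Sigma_{\E_{m+1}} \circ \free_{\E_m}^{\E_{m+1}}$, since $\free$ preserves the zero object and all colimits. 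This last point is exactly the commutation established at the very end of the proof of \cref{thm:suspensionEn}.

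For the left equivalence $\Barc^k \circ \free_{\E_n}^{\E_{n+k}} \simeq \Sigma_{\E_n}^k$ I would induct on $k$, the case $k=1$ being the upper-right triangle. For the inductive step I split off the topmost free functor and the outermost $\Barc$: writing $\free_{\E_n}^{\E_{n+k}} \simeq \free_{\E_{n+k-1}}^{\E_{n+k}} \circ \free_{\E_n}^{\E_{n+k-1}}$, the upper-right triangle at level $n+k-1$ turns $\Barc \circ \free_{\E_{n+k-1}}^{\E_{n+k}}$ into $\Sigma_{\E_{n+k-1}}$, so that $\Barc^k \circ \free_{\E_n}^{\E_{n+k}} \simeq \Barc^{k-1} \circ \Sigma_{\E_{n+k-1}} \circ \free_{\E_n}^{\E_{n+k-1}}$. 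Commuting the suspension past the free functor down to level $n$ and then invoking the inductive hypothesis $\Barc^{k-1} \circ \free_{\E_n}^{\E_{n+k-1}} \simeq \Sigma_{\E_n}^{k-1}$ yields $\Sigma_{\E_n}^{k-1} \circ \Sigma_{\E_n} = \Sigma_{\E_n}^k$.

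For the right equivalence $\Sigma_{\E_n}^k \simeq \free_{\E_{n-k}}^{\E_n} \circ \Barc^k$ (for $k \leq n$) I would likewise induct on $k$ using the lower-left triangle, with inductive hypothesis $\Sigma_{\E_n}^{k-1} \simeq \free_{\E_{n-k+1}}^{\E_n} \circ \Barc^{k-1}$. Applying $\Sigma_{\E_n}$, commuting it past $\free_{\E_{n-k+1}}^{\E_n}$ to obtain $\free_{\E_{n-k+1}}^{\E_n} \circ \Sigma_{\E_{n-k+1}} \circ \Barc^{k-1}$, and rewriting $\Sigma_{\E_{n-k+1}} \circ \Barc^{k-1}$ via the lower-left triangle at level $n-k+1$ as $\free_{\E_{n-k}}^{\E_{n-k+1}} \circ \Barc^k$ produces $\free_{\E_{n-k}}^{\E_n} \circ \Barc^k$ after composing free functors. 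Here the constraint to monitor is that the lower-left triangle needs $n-k+1 \geq 1$, i.e.\ $k \leq n$, which is precisely why the right-hand equivalence is restricted to $k \leq n$. The final clause is then the case $k=n$: one gets $\Sigma_{\E_n}^n(A) \simeq \free_{\E_0}^{\E_n}(\Barc^n(A))$, manifestly in the image of $\free_{\E_0}^{\E_n}$.

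I do not expect a genuine obstacle: the mathematical content sits entirely in \cref{thm:suspensionEn}, and the corollary is bookkeeping. The one point demanding care is index management in the inductive steps — applying each triangle at the correct intermediate level and tracking on which $\E_m$-level the suspension lives as it is transported between levels by the free functors. Keeping the two equivalences logically separate, so that the left equivalence never silently relies on $k \leq n$, and observing that every equivalence produced is natural in $A$ because each ingredient is, completes the argument.
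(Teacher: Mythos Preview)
Your proposal is correct and follows essentially the same approach as the paper, which simply states that the corollary is obtained by ``applying \cref{thm:suspensionEn} several times''; you have spelled out precisely how this iteration goes. The one auxiliary fact you isolate --- that $\free_{\E_{m-1}}^{\E_m}$ commutes with suspension --- is itself a consequence of combining the two triangles of \cref{thm:suspensionEn} (lower-left at level $m$ followed by upper-right at level $m-1$), so your argument really uses nothing beyond the theorem.
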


\begin{corollary}
\label{cor:loopsEn}
Let $\C$ be an $\E_{n+1}$-monoidal $\infty$-category and assume that $\Alg_{\E_0}^\aug(\C)$ admits a loop functor $\Omega$. Then there is a commutative diagram as follows:
\[
\begin{tikzcd}
\Alg_{\E_n}^{\mathrm{aug}}(\C) \ar{dr}{\Omega} \ar{d}[swap]{\res_{\E_{n-1}}^{\E_n}}\ar{r}{\omega} & \Alg_{\E_{n+1}}^{\mathrm{aug}}(\C) \ar{d}{\res_{\E_{n}}^{\E_{n+1}}} \\
\Alg_{\E_{n-1}}^{\mathrm{aug}}(\C) \ar{r}[swap]{\omega} & \Alg_{\E_n}^{\mathrm{aug}}(\C)
\end{tikzcd}
\]
\end{corollary}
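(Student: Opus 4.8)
The plan is to produce the square by dualizing \cref{thm:suspensionEn} at level $n$: that diagram consists of colimit-preserving functors between pointed $\infty$-categories, and I would obtain the asserted square simply by passing to right adjoints throughout. The three types of functor appearing there have recognizable right adjoints: each free functor $\free_{\E_k}^{\E_{k+1}}$ has right adjoint $\res_{\E_k}^{\E_{k+1}}$ by \cref{lemma:algebras}; the suspension $\Sigma_{\E_n}$ has the loop functor $\Omega$ as its right adjoint; and I would write $\omega$ for the right adjoint of the bar construction $\Barc$. Granting the existence of these adjoints, the two triangles of the corollary are then immediate: applying $(-)^R$ to the identities $\Sigma_{\E_n}\simeq \Barc\circ\free_{\E_n}^{\E_{n+1}}$ and $\Sigma_{\E_n}\simeq \free_{\E_{n-1}}^{\E_n}\circ\Barc$ supplied by \cref{thm:suspensionEn}, and using that the right adjoint of a composite is the composite of right adjoints in the reverse order together with the uniqueness of adjoints, yields $\Omega\simeq\res_{\E_n}^{\E_{n+1}}\circ\omega$ and $\Omega\simeq\omega\circ\res_{\E_{n-1}}^{\E_n}$ respectively. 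These are exactly the upper-right and lower-left triangles of the diagram.

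Before this formal step can be carried out I must know that the relevant adjoints exist. The loop functor is the easier case. I would first promote the hypothesis on $\Alg_{\E_0}^{\aug}(\C)$ to all intermediate levels: each forgetful functor $\res_{\E_{k-1}}^{\E_k}$ is conservative and preserves sifted colimits (as recorded in the proof of \cref{thm:suspensionEn}) and admits a left adjoint, hence is monadic by the Barr--Beck--Lurie theorem \cite[\S 4.7]{HA}, and monadic functors create all limits that exist in their target. Since loops on the pointed category $\Alg_{\E_0}^{\aug}(\C)$ are the pullbacks $\one\times_X\one$, an induction along the tower $\Alg_{\E_n}^{\aug}(\C)\to\dots\to\Alg_{\E_0}^{\aug}(\C)$ shows that these pullbacks exist at every level and are preserved by the restriction functors. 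In particular $\Alg_{\E_n}^{\aug}(\C)$ is pointed and admits loops, so that $\Sigma_{\E_n}\dashv\Omega$ holds automatically.

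The main obstacle is the existence of the right adjoint $\omega$ to the bar construction, required for the bar functors $\Alg_{\E_n}^{\aug}(\C)\to\Alg_{\E_{n-1}}^{\aug}(\C)$ and $\Alg_{\E_{n+1}}^{\aug}(\C)\to\Alg_{\E_n}^{\aug}(\C)$; here the assumption that $\C$ is $\E_{n+1}$-monoidal is what makes the latter available. In contrast to the restriction functors, $\omega$ is not forgetful, and although \cref{thm:suspensionEn} shows that $\Barc$ preserves all colimits, the categories involved need not be presentable, so one cannot simply invoke the adjoint functor theorem. I would instead construct $\omega$ by hand as the cobar construction right adjoint to $\Barc$ — the functor foreshadowed in the introduction — dual to the geometric realization underlying $\Barc$ (cf.\ \cref{lemma:Bar}); this is precisely the step at which the loop-functor hypothesis is consumed, as it is what guarantees the limits defining $\omega$.

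Once $\omega$ and $\Omega$ are in place the corollary follows by the formal argument of the first paragraph. I would emphasize that all the content is concentrated in the existence of $\omega$: granting it, the commutative square, and in particular its lower-left triangle $\Omega\simeq\omega\circ\res_{\E_{n-1}}^{\E_n}$ — which encodes that the $\E_n$-algebra structure on $\Omega A$ depends only on the underlying $\E_{n-1}$-algebra of $A$ — is produced purely by dualizing \cref{thm:suspensionEn}.
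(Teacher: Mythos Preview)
Your overall strategy---dualize \cref{thm:suspensionEn} by passing to right adjoints---is exactly what the paper does, and your treatment of the loop functor at all levels is correct. The genuine gap is in your construction of $\omega$. You propose to build it ``by hand as the cobar construction\ldots dual to the geometric realization underlying $\Barc$,'' claiming that ``this is precisely the step at which the loop-functor hypothesis is consumed.'' But the cobar construction is a totalization over $\Delta$, an infinite limit, whereas the hypothesis grants only the pullbacks $\one\times_X\one$. There is no reason these totalizations should exist in $\Alg_{\E_k}^{\aug}(\C)$ under the stated assumptions, so your proposed construction of $\omega$ does not go through. (Relatedly, \cref{thm:suspensionEn} itself requires $\C\in\Alg_{\E_{n+1}}(\Cat^{\wc})$, which the corollary does not assume; so you cannot even invoke it for $\C$ directly.)

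The paper resolves both issues at once by enlarging the ambient category rather than constructing $\omega$ explicitly. One Yoneda-embeds $\C\hookrightarrow\P(\C)$ with the Day convolution $\E_{n+1}$-structure, so that $\P(\C)$ is presentable and \cref{thm:suspensionEn} applies there. All colimit-preserving functors in that diagram are then left adjoints by the adjoint functor theorem, and passing to right adjoints yields the square for $\P(\C)$ with $\omega$ the abstract right adjoint of $\Barc$. The remaining point is that this square restricts to $\C$: for $\res$ and $\Omega$ this is clear, and for $\omega$ one uses the already-established upper-right triangle $\res_{\E_n}^{\E_{n+1}}\circ\omega\simeq\Omega$ to see that the underlying object of $\omega(A)$ lies in $\C$ whenever $A$ does. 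Thus the existence of $\omega$ on $\C$ is not proved directly but is \emph{deduced} from the commutativity of the diagram in the larger category---precisely the step your argument is missing.
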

\begin{proof}
First, we observe that also the categories $\Alg_{\E_n}^\aug(\C)$ admit a loop functor, since the forgetful functor to $\Alg_{\E_0}^\aug(\C)$ preserves limits and is conservative.
Consider the Yoneda embedding $\C \to \P(\C)$. Day convolution provides an $\E_{n+1}$-monoidal structure on $\P(\C)$ which preserves small colimits in each variable and such that the Yoneda embedding canonically refines to a symmetric monoidal functor \cite[Cor.\ 4.8.1.12]{HA}. Up to enlargening a universe, we may apply \cref{thm:suspensionEn} to $\P(\C)$. In this case, all categories in question are presentable, so that all colimit preserving functors are in fact left adjoints. Passing to right adjoints, we obtain a commutative diagram
\[\begin{tikzcd}
\Alg_{\E_n}^{\mathrm{aug}}(\P(\C)) \ar{dr}{\Omega} \ar{d}[swap]{\res_{\E_{n-1}}^{\E_n}}\ar{r}{\omega} & \Alg_{\E_{n+1}}^{\mathrm{aug}}(\P(\C)) \ar{d}{\res_{\E_{n}}^{\E_{n+1}}} \\
\Alg_{\E_{n-1}}^{\mathrm{aug}}(\P(\C)) \ar{r}[swap]{\omega} & \Alg_{\E_n}^{\mathrm{aug}}(\P(\C))
\end{tikzcd}\]
where $\omega$ denotes the right adjoint of $\Barc$. We claim that this diagram restricts to the one we aim to obtain. For this, we need to see that all functors in question preserve algebras whose underlying object lies in $\C \subseteq \P(\C)$. This is clearly true for the restriction functor and for $\Omega$ as well since the inclusion $\Alg_{\E_n}^\aug(\C) \subseteq \Alg_{\E_n}(\P(\C))$ preserves limits. Finally, we need to show that the composite 
\[\Alg_{\E_n}^\aug(\C) \subseteq \Alg_{\E_n}^\aug(\P(\C)) \stackrel{\omega}{\lto} \Alg_{\E_{n+1}}^\aug(\P(\C)) \lto \P(\C), \]
where the last functor is the forgetful functor, lands in $\C \subseteq \P(\C)$. This follows from the commutativity of the upper-right triangle since the forgetful functor above factors through $\res_{\E_n}^{\E_{n+1}}$, whose composite with $\omega$ is given by $\Omega$, so we can use that $\Omega$ has the desired property.
\end{proof}

\begin{corollary}
\label{cor:loopsE_n}
Let $\C$ be as in \cref{cor:loopsEn} and let $A$ be in $\Alg_{\E_n}^\aug(\C)$. There there are canonical equivalences
\[ \res^{\E_{n+k}}_{\E_n} (\omega^k(A)) \simeq \Omega^k(A) \simeq \omega^k(\res^{\E_n}_{\E_{n-k}}(A))\]
where again, we assume $k\leq n$ for the right hand equivalence.
In particular, the $\E_n$-algebra $\Omega^n(A)$ is in the image of the functor $\omega^n\colon \Alg_{\E_0}^\aug(\C) \to \Alg_{\E_n}^\aug(\C)$.
\end{corollary}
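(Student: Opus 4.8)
The plan is to obtain both equivalences by iterating the commutative square of \cref{cor:loopsEn}, precisely mirroring the way \cref{cor:suspensionEn} is deduced from the square of \cref{thm:suspensionEn}. Reading \cref{cor:loopsEn} with $m$ in place of $n$ records, for each relevant level $m$, two triangle identities of endofunctors of $\Alg_{\E_m}^\aug(\C)$,
\[ \res_{\E_m}^{\E_{m+1}} \circ \omega \simeq \Omega \simeq \omega \circ \res_{\E_{m-1}}^{\E_m}, \]
coming respectively from the upper-right and the lower-left triangle. To these I add one elementary fact: because $\Alg_{\E_m}^\aug(\C)$ is pointed with zero object $\one$, the loop functor is the pullback $\Omega A \simeq \one \times_A \one$, and the single-step restriction functors preserve limits and send $\one$ to $\one$; hence $\Omega$ commutes with restriction, $\res_{\E_{m-1}}^{\E_m} \circ \Omega \simeq \Omega \circ \res_{\E_{m-1}}^{\E_m}$, and the same therefore holds for the composite restrictions $\res_{\E_i}^{\E_j}$.

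For the left-hand equivalence I would induct on $k$, the case $k=1$ being the upper-right triangle. For the step, factor $\res_{\E_n}^{\E_{n+k}} = \res_{\E_n}^{\E_{n+k-1}} \circ \res_{\E_{n+k-1}}^{\E_{n+k}}$ and $\omega^k = \omega \circ \omega^{k-1}$; the upper-right triangle at level $m = n+k-1$ replaces $\res_{\E_{n+k-1}}^{\E_{n+k}} \circ \omega$ by the loop functor on $\Alg_{\E_{n+k-1}}^\aug(\C)$, and commuting this loop functor leftward past $\res_{\E_n}^{\E_{n+k-1}}$ turns it into the loop functor on $\Alg_{\E_n}^\aug(\C)$. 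What is left is $\Omega \circ \res_{\E_n}^{\E_{n+k-1}} \circ \omega^{k-1}$, which by the inductive hypothesis is $\Omega \circ \Omega^{k-1} = \Omega^k$.

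For the right-hand equivalence, which requires $k \leq n$ so that $\res_{\E_{n-k}}^{\E_n}$ is defined, I would run the symmetric induction using the lower-left triangle. The case $k=1$ is that triangle verbatim. For the step, peel the innermost pair off $\omega^k \circ \res_{\E_{n-k}}^{\E_n} = \omega^{k-1} \circ (\omega \circ \res_{\E_{n-k}}^{\E_{n-k+1}}) \circ \res_{\E_{n-k+1}}^{\E_n}$ using the lower-left triangle at level $m = n-k+1$, slide the resulting loop functor rightward past $\res_{\E_{n-k+1}}^{\E_n}$ until it becomes the loop functor on $\Alg_{\E_n}^\aug(\C)$, and recognise the surviving prefactor $\omega^{k-1} \circ \res_{\E_{n-k+1}}^{\E_n}$ as $\Omega^{k-1}$ by induction, giving $\Omega^{k-1} \circ \Omega = \Omega^k$. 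The final assertion is the case $k=n$: the equivalence $\Omega^n(A) \simeq \omega^n(\res_{\E_0}^{\E_n}(A))$ exhibits $\Omega^n(A)$ in the image of $\omega^n$.

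The mathematical content thus lies entirely in \cref{cor:loopsEn}; no new geometric input is needed. I expect the only real work to be organisational: keeping track of the level at which each instance of $\Omega$ lives as it is transported along a chain of restrictions, and verifying that these chains of equivalences are natural so that the resulting equivalences are canonical as claimed. One should also note that the left-hand equivalence invokes \cref{cor:loopsEn} at levels up to $n+k-1$, so it tacitly requires $\C$ to be $\E_{n+k}$-monoidal (automatic when $\C$ is $\E_\infty$-monoidal, as in the applications to $\Mod_R$), whereas the right-hand equivalence only uses levels $\leq n$ and is covered by the stated hypothesis.
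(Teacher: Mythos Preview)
Your proposal is correct and is exactly the argument the paper has in mind: the corollary is stated without proof, parallel to \cref{cor:suspensionEn}, and is meant to follow by iterating the square of \cref{cor:loopsEn} just as you describe. Your remark that the left-hand equivalence implicitly requires $\C$ to be $\E_{n+k}$-monoidal is a valid observation that the paper glosses over.
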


\cref{cor:loopsE_n} provides two useful observations. First, the second equivalence shows that for an augmented $\E_n$-algebra $A$ the $\E_n$-structure of $\Omega A$ only depends on the underlying $\E_{n-1}$-structure of $A$. As mentioned in the introduction, the special case where $A= C^*(X;R)$ is the $\E_\infty$-algebra of $R$-valued cochains on a space $X$, for some commutative ring $R$, has been claimed by Mandell \cite{mandell}. Secondly, the first equivalence shows that the algebra $\Omega A$ admits a natural $\E_{n+1}$-algebra structure (which does, in general, depend on the full $\E_n$-structure of $A$). 

\subsection{The stable case}\label{section:stable-case}
We will be particularly interested in the case where $\C$ is stable, in which case there is an equivalence between augmented and non-unital algebras. Let us recall that a non-unital operad $\O$ is one with $\O(0) = \varnothing$. There are then the non-unital versions $\E_n^\nu$ of the $\E_n$-operads and they satisfy 
\[\E_n^\nu(k) = \begin{cases} \varnothing & \text{ if } n=0 \\ \E_n(k) & \text{ else} \end{cases}\]
and hence come with canonical morphisms $\E_n^\nu \to \E_n$. For $\C$ an $\E_n$-monoidal category, we write $\Alg_{\E_n}^\nu(\C)$ for $\Alg_{\E_n^\nu}(\C)$ for the category of non-unital $\E_n$-algebras. 

For an augmented algebra $A$, we write $\m(A) = \fib(A \to \one)$ for its augmentation ideal. A priori, it is simply an object of $\C$ but it canonically upgrades to a non-unital $\E_n$-algebra. In fact, when $\C$ is stably $\E_n$-monoidal, $\m\colon \Alg_{\E_n}^\aug(\C) \to \C$ canonically enhances to an equivalence of $\infty$-categories \cite[Prop.\ 5.4.4.10]{HA}
\begin{equation*}
\Alg_{\E_n}^{\mathrm{aug}}(\C) \xrightarrow{\,\simeq\,} \Alg_{\E_n}^{\mathrm{nu}}(\C)
\end{equation*}
whose inverse we denote by $X \mapsto \one \oplus X$.

When $\C$ is stably $\E_n$-monoidal, there is a canonical equivalence $\C \simeq \Sp(\Alg_{\E_n}^\aug(\C))$, compatible with the restriction maps $\Alg_{\E_n}^\aug(\C) \to \Alg_{\E_k}^\aug(\C)$ whenever $k\leq n$ as well as  compatible with lax $\E_n$-monoidal and exact functors.\footnote{Informally, given such a functor $f\colon \C \to \D$, it sends an augmented $\E_n$-algebra $A \to \one_\C$ in $\C$ to the augmented $\E_n$-algebra $F(A) \times_{F(\one_\C)} \one_\D$.} This follows from the canonical equivalence $\Mod_\one^{\E_n}(\C) \simeq \C$ \cite[Remark 7.3.5.3]{HA} and \cite[Theorem 7.3.4.13]{HA}. The composite of this equivalence followed by the functor $\Omega^\infty\colon \Sp(\Alg_{\E_n}^\aug(\C)) \to \Alg_{\E_n}^\aug(\C)$ is the trivial $\E_n$-algebra functor 
\[ \tr_{\E_n} \colon \C \to \Alg_{\E_n}^\aug(\C), \quad X \mapsto \one \oplus X \]
which exhibits $\tr_{\E_n}(X)$ is the trivial square zero extension on $X$. Informally speaking, it equips $X$ with the zero multiplication. Note that the functor $\tr_{\E_0}$ is an equivalence, as $\Alg_{\E_0}^\aug(\C) \simeq \C_{\one /\one} \simeq \C$ is already stable. In what follows, we will also view $\tr_{\E_n}$ as a functor on $\Alg^\aug_{\E_0}(\C)$ via this equivalence.
If $\C$ is presentably stably $\E_n$-monoidal, then $\tr_{\E_n}$ admits a left adjoint $\Sigma^\infty_{\E_n}\colon \Alg_{\E_n}^\aug(\C) \to \C$, often referred to as the $\E_n$-homology of an augmented $\E_n$-algebra.

\begin{remark}\label{rem:trivial-functors-compatible}
The trivial functors are compatible with forgetful functors. More precisely, when $\C$ is $\E_n$-monoidal, it is also $\E_{n-1}$-monoidal and the functor $\tr_{\E_{n-1}}$ is equivalent to the composite 
\[ \C \xrightarrow{\tr_{\E_n}} \Alg^\aug_{\E_n} \xrightarrow{\res_{\E_{n-1}}^{\E_n}} \Alg_{\E_{n-1}}^\aug(\C),\]
as follows from the compatibility of the equivalences $\C \simeq \Sp(\Alg_{\E_n}^\aug(\C))$ with the restriction maps as indicated above.

In particular, if $\C$ is symmetric monoidal, then $\tr_{\E_n}$ is equivalent to the composite
\[ \C \xrightarrow{\tr_{\E_\infty}} \Alg^\aug_{\E_\infty}(\C) \xrightarrow{\res_{\E_n}^{\E_\infty}} \Alg^\aug_{\E_n}(\C).\]
\end{remark}

For future reference, we record the following basic property of trivial algebras.
\begin{lemma}\label{lemma:trivial-and-base-change}
Let $f\colon \C \to \D$ an exact and lax-$\E_n$-monoidal functor between stably $\E_n$-monoidal categories. Then the induced functor $f\colon \Alg_{\E_n}^\nu(\C) \to \Alg_{\E_n}^\nu(\D)$ preserves trivial algebras. 
\end{lemma}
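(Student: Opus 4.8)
The plan is to prove the stronger statement that the induced functor \emph{commutes} with the trivial-algebra functor; preservation of trivial algebras is then immediate, since every trivial non-unital algebra is by definition of the form $\tr_{\E_n}^\nu(X)$ for some $X\in\C$, where $\tr_{\E_n}^\nu\colon \C \to \Alg^\nu_{\E_n}(\C)$ denotes the trivial non-unital algebra functor corresponding to $\tr_{\E_n}$ under the equivalence $\Alg_{\E_n}^\aug(\C) \simeq \Alg_{\E_n}^\nu(\C)$. Concretely I will produce a natural equivalence $f \circ \tr_{\E_n}^\nu \simeq \tr_{\E_n}^\nu \circ f$, so that $f(\tr^\nu_{\E_n}(X)) \simeq \tr^\nu_{\E_n}(f(X))$ is again trivial.

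Recall that, as explained before the lemma, $\tr_{\E_n}^\nu$ is the composite of the stabilization equivalence $e_\C \colon \C \xrightarrow{\,\simeq\,} \Sp(\Alg_{\E_n}^\nu(\C))$ with $\Omega^\infty$. First I would check that the induced functor $f\colon \Alg^\nu_{\E_n}(\C) \to \Alg^\nu_{\E_n}(\D)$, given by post-composition with the lax $\E_n$-monoidal $f$, is exact, i.e.\ preserves finite limits. This is routine: the forgetful functors to $\C$ and $\D$ are conservative, create finite limits, and commute with $f$; since $f\colon \C \to \D$ is exact by hypothesis, a diagram chase shows that the comparison map for any finite limit becomes an equivalence after applying the (conservative) forgetful functor, hence is itself an equivalence.

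Given exactness, the functoriality of stabilization for finite-limit-preserving functors of pointed $\infty$-categories supplies an induced functor $\Sp(f)$ together with a natural equivalence $\Omega^\infty \circ \Sp(f) \simeq f \circ \Omega^\infty$. On the other hand, the compatibility of the equivalence $\C \simeq \Sp(\Alg_{\E_n}^\nu(\C))$ with lax $\E_n$-monoidal exact functors---the non-unital counterpart of the compatibility recorded before the lemma, transported along $\Alg^\aug_{\E_n} \simeq \Alg^\nu_{\E_n}$---identifies $\Sp(f) \circ e_\C \simeq e_\D \circ f$. Chaining these gives
\[ f \circ \tr_{\E_n}^\nu = f \circ \Omega^\infty \circ e_\C \simeq \Omega^\infty \circ \Sp(f) \circ e_\C \simeq \Omega^\infty \circ e_\D \circ f = \tr_{\E_n}^\nu \circ f, \]
as desired.

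The main obstacle is the bookkeeping around the identification $\Sp(f) \simeq f$: the excerpt states that $\C \simeq \Sp(\Alg_{\E_n}^\aug(\C))$ is compatible with lax monoidal exact functors only in the augmented setting, so I must check that this compatibility passes to the non-unital picture. The essential point is that the augmented-induced functor (the pullback $f(A)\times_{f(\one)}\one$ of the footnote) and the non-unital-induced functor (post-composition with $f$) agree under $\m$: since $f$ is exact, the augmentation ideal of $f(\one \oplus M)\times_{f(\one)}\one$ is $\fib(f(\one \oplus M) \to f(\one)) \simeq f(M)$. Once this identification is in place the two induced functors coincide, the compatibility transports verbatim, and the computation above goes through.
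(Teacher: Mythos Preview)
Your proposal is correct and follows essentially the same argument as the paper: both establish exactness of the induced functor on algebras, invoke the functoriality of stabilization to commute $f$ past $\Omega^\infty$, and use the stated compatibility of the equivalence $\C\simeq\Sp(\Alg_{\E_n}^\aug(\C))$ with $f$. The only difference is that the paper runs the argument directly in the augmented setting (where the compatibility is formulated), thereby sidestepping the augmented-versus-non-unital bookkeeping you address in your final paragraph.
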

\begin{proof}
By assumption, $f\colon \C \to \D$ preserves finite limits. It follows that the induced functor $f \colon \Alg_{\E_n}^\aug(\C) \to \Alg_{\E_n}^\aug(\D)$ also preserves finite limits. This implies that the following diagram commutes.
\[\begin{tikzcd}
	\C \ar[r,"\simeq"] \ar[d,"f"'] & \Sp(\Alg_{\E_n}^\aug(\C)) \ar[r,"\Omega^\infty"] \ar[d,"\Sp(f)"'] & \Alg_{\E_n}^\aug(\C) \ar[d,"f"] \\
	\D \ar[r,"\simeq"] & \Sp(\Alg_{\E_n}^\aug(\D)) \ar[r,"\Omega^\infty"] & \Alg_{\E_n}^\aug(\D)
\end{tikzcd}\]
The claim then follows from the definition of the trivial functors as the horizontal composites.
\end{proof}

The functors $\tr_{\E_n}$ and $\omega^n$ as studied above are closely related:
\begin{theorem}\label{thm:trivial-algebras}
Let $\C$ be a stably $\E_n$-monoidal $\infty$-category. Then there is a canonical equivalence of functors $\omega^n \simeq \Omega^n\circ \tr_{\E_n} \colon \Alg_{\E_0}^\aug(\C) \to \Alg_{\E_n}^\aug(\C)$.
\end{theorem}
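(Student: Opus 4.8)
The plan is to obtain the equivalence as a formal consequence of the right-hand equivalence in \cref{cor:loopsE_n}, specialized to $k = n$, together with the compatibility of the trivial algebra functors recorded in \cref{rem:trivial-functors-compatible}. The starting point is the natural equivalence
\[ \Omega^n(A) \simeq \omega^n\!\left(\res_{\E_0}^{\E_n}(A)\right), \qquad A \in \Alg_{\E_n}^\aug(\C), \]
which is the $k = n$ case of the second equivalence of \cref{cor:loopsE_n}. Before using it I would check that it is available under the present, weaker hypotheses: \cref{cor:loopsE_n} is stated for $\C$ as in \cref{cor:loopsEn}, i.e.\ $\E_{n+1}$-monoidal, whereas the displayed equivalence only refers to the functors $\Omega$, $\omega$ and $\res$ among the categories $\Alg_{\E_j}^\aug(\C)$ with $j \leq n$. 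Tracing its construction, dual to that of \cref{cor:suspensionEn}, the highest-level triangle that enters is the lower-left triangle of \cref{thm:suspensionEn} at level $n$, which commutes already for $\E_n$-monoidal $\C$ by the remark following that theorem; the extra $\E_{n+1}$-monoidality in \cref{cor:loopsEn} is needed only for the functor $\omega\colon \Alg_{\E_n}^\aug(\C)\to\Alg_{\E_{n+1}}^\aug(\C)$ and for the first equivalence, neither of which enters here. Hence the displayed equivalence holds whenever $\C$ is stably $\E_n$-monoidal, which moreover guarantees that $\Alg_{\E_n}^\aug(\C)$ is pointed with finite limits and therefore admits the loop functor $\Omega$.

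Next I would precompose this natural equivalence of functors $\Alg_{\E_n}^\aug(\C)\to\Alg_{\E_n}^\aug(\C)$ with the trivial algebra functor $\tr_{\E_n}\colon \Alg_{\E_0}^\aug(\C)\to\Alg_{\E_n}^\aug(\C)$. The one remaining input is the identification $\res_{\E_0}^{\E_n}\circ\tr_{\E_n}\simeq \id_{\Alg_{\E_0}^\aug(\C)}$: this is exactly the $n$-fold iterate of \cref{rem:trivial-functors-compatible}, which gives $\res_{\E_0}^{\E_n}\circ\tr_{\E_n}\simeq\tr_{\E_0}$, read through the equivalence $\tr_{\E_0}\colon \C\xrightarrow{\simeq}\Alg_{\E_0}^\aug(\C)$ by which we regard $\tr_{\E_n}$ as a functor out of $\Alg_{\E_0}^\aug(\C)$. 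Feeding an object $B\in\Alg_{\E_0}^\aug(\C)$ into the displayed equivalence then yields
\[ \Omega^n\!\left(\tr_{\E_n}(B)\right) \simeq \omega^n\!\left(\res_{\E_0}^{\E_n}(\tr_{\E_n}(B))\right) \simeq \omega^n(B), \]
naturally in $B$, which is the asserted equivalence $\Omega^n\circ\tr_{\E_n}\simeq\omega^n$.

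I would present each step as a composite of natural transformations, so that the conclusion is a genuine equivalence of functors and not merely a pointwise one; all three ingredients---the equivalence of \cref{cor:loopsE_n}, the compatibility of \cref{rem:trivial-functors-compatible}, and the defining identification $\Alg_{\E_0}^\aug(\C)\simeq\C$---are natural, so this is automatic once the bookkeeping is set up. The only genuinely delicate point, and the step I expect to be the main obstacle, is precisely this bookkeeping: correctly matching the index shifts in the boundary case $k = n$, confirming that $\E_n$-monoidality suffices for the version of \cref{cor:loopsE_n} we invoke, and cleanly unwinding the convention that $\tr_{\E_n}$ is regarded as a functor on $\Alg_{\E_0}^\aug(\C)$ through $\tr_{\E_0}$, so that $\res_{\E_0}^{\E_n}\circ\tr_{\E_n}$ is identified with the identity rather than with $\tr_{\E_0}$ itself. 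Once these identifications are in place the proof is a purely formal concatenation requiring no further computation.
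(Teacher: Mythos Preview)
Your proposal is correct and follows essentially the same argument as the paper: both precompose the equivalence $\Omega^n \simeq \omega^n \circ \res^{\E_n}_{\E_0}$ from \cref{cor:loopsE_n} with $\tr_{\E_n}$ and then use $\res^{\E_n}_{\E_0}\circ\tr_{\E_n}\simeq\id$. You are more explicit than the paper about the hypothesis mismatch (the theorem assumes only $\E_n$-monoidality while \cref{cor:loopsE_n} is stated for $\E_{n+1}$-monoidal $\C$), and your observation that the relevant half of \cref{cor:loopsE_n} only uses the lower-left triangles is a worthwhile clarification the paper leaves implicit.
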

\begin{proof}
Again, we note that $\Alg^\aug_{\E_0}(\C)$ is canonically equivalent to $\C$ via the augmentation ideal. Then we note that the composite
\[ \Alg_{\E_0}^\aug(\C) \xrightarrow{\tr_{\E_n}} \Alg_{\E_n}^\aug(\C) \xrightarrow{\res^{\E_n}_{\E_0}} \Alg_{\E_0}^\aug(\C) \]
is canonically equivalent to the identity. In particular, we have
$\omega^n \simeq \omega^n \circ \res^{\E_n}_{\E_0} \circ \tr_{\E_n} \simeq \Omega^n \tr_{\E_n}$
by \cref{cor:loopsE_n}.
\end{proof}

Combined again with \cref{cor:loopsE_n} we obtain:
\begin{corollary}
\label{prop:nfoldsusploops}
Let $\C$ be a stably $\E_n$-monoidal $\infty$-category. For an augmented $\E_n$-algebra $A$ in $\C$ there is a natural equivalence
\begin{equation*}
\Omega^n(A) \simeq \tr_{\E_n}(\Omega^n(\res^{\E_n}_{\E_0}(A))).
\end{equation*}
\end{corollary}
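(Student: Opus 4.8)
The plan is to concatenate the two results immediately preceding the statement, \cref{cor:loopsE_n} and \cref{thm:trivial-algebras}, and then to commute the trivial-algebra functor past the loop functor. Since $\C$ is stable, we have $\Alg_{\E_0}^\aug(\C) \simeq \C$, which is itself stable and in particular admits a loop functor; thus \cref{cor:loopsE_n} is applicable. Taking $k = n$ in its right-hand equivalence yields a natural equivalence
\[ \Omega^n(A) \simeq \omega^n\big(\res^{\E_n}_{\E_0}(A)\big). \]
This already exhibits $\Omega^n(A)$ in the image of $\omega^n$; what remains is to rewrite $\omega^n$ in terms of $\tr_{\E_n}$ and to move the trivial functor into the outermost position.

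Next I would invoke \cref{thm:trivial-algebras}, which provides an equivalence of functors $\omega^n \simeq \Omega^n \circ \tr_{\E_n} \colon \Alg_{\E_0}^\aug(\C) \to \Alg_{\E_n}^\aug(\C)$. Substituting this into the equivalence above gives
\[ \Omega^n(A) \simeq \Omega^n\big(\tr_{\E_n}(\res^{\E_n}_{\E_0}(A))\big). \]
The only discrepancy with the desired formula is the order of the two functors: here $\Omega^n$ is applied in $\Alg_{\E_n}^\aug(\C)$ after $\tr_{\E_n}$, whereas the statement applies $\Omega^n$ to $\res^{\E_n}_{\E_0}(A)$ inside $\Alg_{\E_0}^\aug(\C) \simeq \C$ first and then applies $\tr_{\E_n}$.

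The key remaining step is therefore to establish that $\tr_{\E_n}$ commutes with loops, i.e.\ that there is a natural equivalence $\Omega \circ \tr_{\E_n} \simeq \tr_{\E_n} \circ \Omega$, whence the same holds for $\Omega^n$ by iteration. This I would deduce from the definition of $\tr_{\E_n}$ as the composite $\C \simeq \Sp(\Alg_{\E_n}^\aug(\C)) \xrightarrow{\Omega^\infty} \Alg_{\E_n}^\aug(\C)$: the functor $\Omega^\infty$ preserves all limits that exist, in particular the loop functor, which is the finite limit given by the pullback over the zero object; moreover, under the stabilization equivalence $\C \simeq \Sp(\Alg_{\E_n}^\aug(\C))$ the loop functor on $\C$ corresponds to the loop functor on the stable $\infty$-category $\Sp(\Alg_{\E_n}^\aug(\C))$, which $\Omega^\infty$ intertwines with $\Omega$ on $\Alg_{\E_n}^\aug(\C)$. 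Combining the three natural equivalences produces the claimed natural equivalence $\Omega^n(A) \simeq \tr_{\E_n}(\Omega^n(\res^{\E_n}_{\E_0}(A)))$. I do not anticipate a genuine obstacle, since the argument is a formal concatenation; the one point demanding care is precisely this commutation of $\tr_{\E_n}$ with $\Omega$, which reduces to the left-exactness of $\Omega^\infty$ together with the compatibility of the stabilization equivalence with loops recorded in \cref{section:stable-case}.
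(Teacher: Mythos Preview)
Your proof is correct and follows the same route as the paper, which records the corollary as an immediate combination of \cref{thm:trivial-algebras} with \cref{cor:loopsE_n}. The only point the paper leaves implicit is the commutation of $\tr_{\E_n}$ with $\Omega^n$ that you carefully spell out; your justification via the limit-preservation of $\Omega^\infty$ (equivalently, the fact that $\tr_{\E_n}$ is a right adjoint) is exactly the right one.
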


By passing to left adjoints, we obtain the following corollary.
\begin{corollary}\label{remark:suspension-revisited}
Let $\C$ be presentably stably $\E_n$-monoidal. For an augmented $\E_n$-algebra $A$ in $\C$ there is a natural equivalence
\[ \Sigma^n_{\E_n}(A) \simeq \free_{\E_0}^{\E_n}(\Sigma^n(\Sigma^\infty_{\E_n}A)).\]
\end{corollary}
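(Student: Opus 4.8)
The plan is to obtain this statement exactly as the phrase preceding it suggests, namely by passing to left adjoints in \cref{prop:nfoldsusploops}. That corollary provides a natural equivalence of endofunctors of $\Alg_{\E_n}^\aug(\C)$,
\[ \Omega^n \simeq \tr_{\E_n}\circ \Omega^n \circ \res^{\E_n}_{\E_0}, \]
where the inner $\Omega^n$ is the loop functor on $\Alg_{\E_0}^\aug(\C)\simeq\C$ and the outer one is the loop functor on $\Alg_{\E_n}^\aug(\C)$. (These two readings agree because $\tr_{\E_n}$, being $\Omega^\infty$ followed by an equivalence, preserves limits and hence commutes with $\Omega^n$, so the displayed composite indeed computes $A\mapsto\tr_{\E_n}(\Omega^n(\res^{\E_n}_{\E_0}(A)))$.)

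First I would record that every functor in this composite is a right adjoint, and identify its left adjoint. Under the presentability hypothesis $\Alg_{\E_n}^\aug(\C)$ is a presentable pointed $\infty$-category, so the loop functor $\Omega^n$ on it is right adjoint to the suspension functor $\Sigma^n_{\E_n}$. The restriction $\res^{\E_n}_{\E_0}$ is right adjoint to $\free_{\E_0}^{\E_n}$ by iterating \cref{lemma:algebras}. The trivial algebra functor $\tr_{\E_n}$ is right adjoint to $\Sigma^\infty_{\E_n}$ by the discussion preceding the statement. Finally, since $\C$ is stable, the inner loop functor $\Omega^n$ on $\Alg_{\E_0}^\aug(\C)\simeq\C$ is an equivalence whose inverse, and hence left adjoint, is the suspension $\Sigma^n$ on $\C$.

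Then I would invoke the uniqueness of adjoints: an equivalence of right adjoints induces a canonical, natural equivalence of their left adjoints, and the left adjoint of a composite is the composite of the left adjoints in the reverse order. Applying this to the displayed equivalence yields
\[ \Sigma^n_{\E_n} \simeq \free_{\E_0}^{\E_n}\circ \Sigma^n \circ \Sigma^\infty_{\E_n}, \]
which evaluated on an object $A$ is precisely the claimed equivalence $\Sigma^n_{\E_n}(A)\simeq \free_{\E_0}^{\E_n}(\Sigma^n(\Sigma^\infty_{\E_n}A))$.

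I do not expect a serious obstacle: all of the mathematical content is contained in \cref{prop:nfoldsusploops}, and the remaining work is the bookkeeping of identifying each adjoint correctly. The one point demanding a little care is to confirm that the inner loop functor, which lives on $\Alg_{\E_0}^\aug(\C)\simeq\C$ and is an equivalence by stability, contributes the suspension $\Sigma^n$ on $\C$ rather than some other dual. Beyond that, the naturality of the resulting equivalence and the compatible assembly of the several adjunctions are automatic from the formal uniqueness of adjoints in the $\infty$-categorical setting.
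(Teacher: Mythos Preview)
Your proposal is correct and follows exactly the route the paper indicates: the paper's proof is the single phrase ``By passing to left adjoints,'' and you have simply unpacked which adjoints are involved and why each exists under the presentability hypothesis. There is nothing to add.
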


\begin{remark}
By passing to left adjoints in \cref{thm:trivial-algebras} we obtain an equivalence $\Barc^n(A) \simeq \Sigma^n \Sigma^\infty_{\E_n}A$ when $\C$ is presentably $\E_n$-monoidal.
This relation between the $n$-fold Bar construction and $\E_n$-homology is well-known, see e.g.\ \cite[Theorem 1.3]{BM} or \cite[Theorem 13.7]{GKRW}, but often proved via an explicit calculation, in contrast to the rather formal argument obtained here.
\end{remark}

\begin{remark}
Corollaries \ref{remark:suspension-revisited} and \ref{prop:nfoldsusploops} exhibit a kind of `fast stabilization' in the $\infty$-category of $\E_n$-algebras: any $n$-fold loop object is already an infinite loop object and  dually, an $n$-fold suspension is already an infinite suspension object. To put this into perspective, we note that for a stable category $\C$ and a non-unital operad $\O$, the stabilization of the $\infty$-category $\Alg_\O(\C)$ is always equivalent to $\C$ see again \cite[Theorem 7.3.4.13]{HA}. In this situation, it follows formally that any compact object in $\Alg_\O(\C)$, i.e., any finitely presented $\O$-algebra is in the essential image of the functor $\free_{\O}$ after finitely many suspensions. \cref{cor:suspensionEn} shows that for $\O = \E_n^{\mathrm{nu}}$ one has a much sharper statement; this happens after only $n$ suspensions, as well as for \emph{any} non-unital $\E_n$-algebra rather than just finitely presented ones.
\end{remark}

Elaborating on \cref{remark:suspension-revisited}, we obtain the following consequence, where we again implicitly identify $\C$ with $\Alg_{\E_0}^\aug(\C)$. This was already used in the PhD thesis of Shi \cite{Shi}.

\begin{corollary}
Let $\C$ be presentably stably $\E_n$-monoidal. Then the costabilization $\Sp(\Alg_{\E_n}^\aug(\C)^\op)^\op$ of $\Alg_{\E_n}^{\aug}(\C)$ is equivalent to $\C$. In particular, for any stable $\infty$-category $\D$ the functor
\begin{equation*}
\Fun^{\mathrm{rex}}(\D, \C)\lto \Fun^{\mathrm{rex}}(\D, \Alg_{\E_n}^{\aug}(\C)),\quad F \mapsto \free^{\E_n} \circ F
\end{equation*}
is an equivalence, where the superscript rex refers to functors which preserve finite colimits.
\end{corollary}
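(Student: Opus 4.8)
The plan is to identify $\C$ with the costabilization by exhibiting $\free^{\E_n}$ as the associated co-unit functor, and then to read off the displayed equivalence from the universal property of costabilization. Write $\mathcal{A} := \Alg_{\E_n}^{\aug}(\C)$, $F := \free^{\E_n} = \free_{\E_0}^{\E_n}\colon \C \to \mathcal{A}$ and $L := \Sigma^\infty_{\E_n} \colon \mathcal{A} \to \C$; both are left adjoints and in particular preserve finite colimits. The category $\mathcal{A}$ is pointed and presentable, its costabilization $\Sp(\mathcal{A}^{\op})^{\op}$ is the limit of the tower $\cdots \xrightarrow{\Sigma_{\E_n}} \mathcal{A} \xrightarrow{\Sigma_{\E_n}} \mathcal{A}$, and the co-$\Omega^\infty$ functor $\theta \colon \Sp(\mathcal{A}^{\op})^{\op} \to \mathcal{A}$ is the projection $p_0$ to the bottom term; it preserves finite colimits. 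Dualizing the universal property of stabilization (see \cite[\S 1.4.2]{HA}) gives, for every stable $\D$, an equivalence $\theta_* \colon \Fun^{\mathrm{rex}}(\D, \Sp(\mathcal{A}^{\op})^{\op}) \xrightarrow{\simeq} \Fun^{\mathrm{rex}}(\D, \mathcal{A})$. Once we produce an equivalence $\Phi \colon \C \xrightarrow{\simeq} \Sp(\mathcal{A}^{\op})^{\op}$ with $\theta \circ \Phi \simeq F$, transporting $\theta_*$ along $\Phi$ turns it into the functor $H \mapsto \free^{\E_n} \circ H$ of the statement, proving everything. So the content is to build $\Phi$.

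First I would speed up the tower: since $n\mathbb{N} \subseteq \mathbb{N}$ is cofinal, $\Sp(\mathcal{A}^{\op})^{\op}$ is equally the limit of the tower with transition functor $T := \Sigma_{\E_n}^{n}$, with $\theta$ still the bottom projection $p_0$ and with cone relation $p_0 \simeq T \circ q$, where $q$ is the projection to the next term. Speeding up is what lets me use \cref{remark:suspension-revisited}, which provides the factorization $T = \Sigma_{\E_n}^{n} \simeq F \circ G \circ L$ with $G := \Sigma^n_{\C}$ an equivalence of the stable category $\C$. Combining \cref{cor:suspensionEn}, in the form $\Barc^n \circ F \simeq \Sigma^n_\C$, with the identification $\Barc^n \simeq \Sigma^n_\C \circ L$ from the remark following \cref{remark:suspension-revisited}, I also obtain $L \circ F \simeq \id_{\C}$. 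These two relations, $T \simeq FGL$ and $LF \simeq \id_{\C}$, are the only inputs that are not purely formal.

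Now define $\Phi$ as the unique finite-colimit-preserving functor with $\theta \circ \Phi \simeq F$, which exists by the universal property applied to $\D = \C$, and put $\Psi := L \circ \theta$; both are rex. One composite is immediate: $\Psi \circ \Phi \simeq L \circ \theta \circ \Phi \simeq L \circ F \simeq \id_{\C}$. For the other, $\Phi \circ \Psi$ and $\id$ are rex endofunctors of the stable category $\Sp(\mathcal{A}^{\op})^{\op}$, so by the universal property (now with $\D = \Sp(\mathcal{A}^{\op})^{\op}$) it suffices to check $\theta \circ \Phi \circ \Psi \simeq \theta$. Here $\theta \Phi \Psi \simeq F \Psi = F L \theta = F L p_0$, and using the cone relation $p_0 \simeq T q = F G L q$ together with $L F \simeq \id_{\C}$ one computes $F L p_0 \simeq F L F G L q \simeq F G L q \simeq T q \simeq p_0 = \theta$. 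Hence $\Phi$ is an equivalence carrying $\free^{\E_n}$ to $\theta$, as required.

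The main obstacle I anticipate is coherence. Objectwise it is clear that every object of the limit is a suspension and hence lies in the essential image of $F$, but promoting this to a natural equivalence $F L \theta \simeq \theta$ is where a naive $\infty$-categorical argument could become delicate. The device that sidesteps this is to phrase the entire argument through natural transformations of functors and to reduce both composites to the level of $\mathcal{A}$ via the universal property of costabilization; then $F L \theta \simeq \theta$ drops out formally from the cone relation $p_0 \simeq T q$ and the factorization $T \simeq F G L$ with $L F \simeq \id_{\C}$, with no manipulation of individual cospectra. A secondary point is to pin down the exact form and variance of the costabilization universal property, which I would get by dualizing the stabilization statement as above, taking care that $\mathrm{rex}$ (rather than $\mathrm{lex}$) functors appear after passing to opposite categories.
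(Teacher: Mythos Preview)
Your proof is correct and rests on exactly the same inputs as the paper's: speeding up the tower by cofinality to transition functor $T=\Sigma_{\E_n}^n$, the factorization $T\simeq F\,\Sigma^n_\C\,L$ from \cref{remark:suspension-revisited}, and the identity $LF\simeq\id_\C$. The difference is in how the conclusion is extracted. The paper interleaves $\C$ into the $\mathcal{A}$-tower to form the zigzag
\[
\cdots \to \mathcal{A}\xrightarrow{\Sigma^n L}\C\xrightarrow{F}\mathcal{A}\xrightarrow{\Sigma^n L}\C\xrightarrow{F}\mathcal{A}
\]
and applies cofinality once more to reduce to the tower $\cdots\xrightarrow{\Sigma^n}\C\xrightarrow{\Sigma^n}\C$, whose limit is visibly $\C$ since $\C$ is stable. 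You instead work through the universal property: you \emph{define} $\Phi$ as the rex lift of $F$ and verify that $\Phi$ and $\Psi=L\theta$ are inverse, reducing $\Phi\Psi\simeq\id$ to the computation $FL\theta\simeq\theta$ via the cone relation $\theta\simeq Tq$. Both are clean; the paper's version is a hair more direct in that it literally identifies the limit diagram, whereas yours has the small advantage of producing the equivalence $\Phi$ together with $\theta\Phi\simeq F$ in one stroke, which is exactly what feeds into the second assertion of the corollary.

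One simplification: your derivation of $LF\simeq\id_\C$ via $\Barc^n$ is correct but roundabout. As the paper notes, $LF=\Sigma^\infty_{\E_n}\circ\free^{\E_n}$ is left adjoint to $\fgt\circ\tr_{\E_n}\simeq\id_\C$, hence is itself the identity.
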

\begin{proof}
By \cite[Prop.\ 1.4.2.24]{HA} the costabilization of $\Alg_{\E_n}^{\aug}(\C)$ may be computed as the inverse limit of the system
\[
\begin{tikzcd}
\cdots \ar{r}{\Sigma_{\E_n}} & \Alg_{\E_n}^{\aug}(\C) \ar{r}{\Sigma_{\E_n}} & \Alg_{\E_n}^{\aug}(\C).
\end{tikzcd}
\]
Cofinality and \cref{remark:suspension-revisited} implies that this agrees with the inverse limit of the diagram
\[
\begin{tikzcd}
\cdots \ar{rr}{\Sigma^n_{\E_n}} \ar{dr}[swap]{\Sigma^n\Sigma^\infty_{\E_n}} && \Alg_{\E_n}^{\mathrm{nu}}(\C) \ar{rr}{\Sigma^n_{\E_n}} \ar{dr}[swap]{\Sigma^n\Sigma^\infty_{\E_n}} && \Alg_{\E_n}^{\mathrm{nu}}(\C) \\
& \C\ar{ur}{\free^{\E_n}} \ar[rr, dashed, ]&& \C\ar{ur}{\free^{\E_n}} &
\end{tikzcd}
\]
where the dashed composite is given by $\Sigma^n$ since the composite $\Sigma^\infty_{\E_n} \free^{\E_n}$ is canonically equivalent to the identity, as its right adjoint canonically identifies with the identity.
Applying cofinality again, the costabilization of $\Alg_{\E_n}^\aug(\C)$ agrees with the inverse limit of the diagram
\[
\begin{tikzcd}
\cdots \ar{r}{\Sigma^n} & \C \ar{r}{\Sigma^n} & \C.
\end{tikzcd}
\]
All the functors in this diagram are equivalences since $\C$ is stable and the corollary follows.
\end{proof}

\section{The stable $\E_n$-operads}
\label{sec:stableEn}

The aim of this section is to explain how the results of \cref{sec:susploopEn} can be reinterpreted as statements about the $\E_n$-operads themselves, at least when viewed as operads in spectra. In particular, we will see that in the special case of stable $\infty$-categories $\C$, Theorem \ref{thm:suspensionEn} is equivalent to the existence of a certain diagram of operad maps as follows:
\[
\begin{tikzcd}
\Sigma^\infty_+ \E_n^{\mathrm{nu}} \ar{r}\ar{d}\ar{dr}{\sigma} & \Sigma^\infty_+ \E_{n+1}^{\mathrm{nu}} \ar{d} \\
S \Sigma^\infty_+ \E_{n-1}^{\mathrm{nu}} \ar{r} & S \Sigma^\infty_+ \E_n^{\mathrm{nu}}.
\end{tikzcd}
\]
Here $\Sigma^\infty_+\E_n^{\mathrm{nu}}$ is the operad in the $\infty$-category of spectra obtained by taking the levelwise suspension spectrum of the nonunital operad $\E_n^\nu$ in spaces. Also, $S\Sigma^\infty_+\E_n^{\mathrm{nu}}$ denotes the \emph{operadic suspension} of $\Sigma^\infty_+\E_n^\nu$, which we discuss in Section \ref{subsec:prelim}, and $\sigma$ denotes the \emph{suspension morphism}. For the corresponding free algebra functors, this morphism can be described as the canonical suspension map $\sigma\colon \free_{\E_n}(X) \to \Sigma^{-1}\free_{\E_n}(\Sigma X)$. The horizontal morphisms in the diagram are the standard inclusions. The vertical arrows are a bit more exotic; they can be thought of as the `Koszul dual' maps to the standard inclusions, cf. Remark \ref{rmk:wrongway}. 

The main technical tool in this section is the result that there is no loss of information in passing from the stable $\E_n$-operads to their associated monads. More precisely, we show in Theorem \ref{thm:SymffEuler} that there is a class of operads $\mathcal{O}$ `with nilpotent Euler classes' for which the assignment
\[
\Operad(\Sp) \lto \Monad(\Sp),\quad \mathcal{O} \mapsto \Sym_{\mathcal{O}}
\]
is fully faithful. In Section \ref{subsec:operadsEulerclasses} we demonstrate that the stable $\E_n$-operads and their operadic (de)suspensions satisfy the conditions of that theorem.

It should be true that the square of operad maps above already exists \emph{unstably}, i.e.\ in the setting of operads in pointed spaces without applying $\Sigma^\infty$ anywhere. To work this out one would need a good handle on the operadic suspension and a possible universal property in the unstable case. We comment on existing work and what would need to be done to obtain this sharpening in Section \ref{subsec:unstableEn}.

\subsection{Preliminaries}
\label{subsec:prelim}

We denote by $\SSeq(\Sp) := \Fun(\Fin^\simeq,\Sp)$ the category of symmetric sequences in spectra. Here $\Fin^\simeq$ denotes the groupoid of finite sets and bijections, which is the free symmetric monoidal category on a single generator. We will usually write $A(n)$ for the value of a symmetric sequence $A$ on the set $\{1, \ldots, n\}$. 
A symmetric sequence $A$ is called non-unital if $A(0)$ is equivalent to $0$. The category $\SSeq(\Sp)$ is equipped with the Day convolution symmetric monoidal structure $\otimes$ based on the usual symmetric monoidal structure on $\Sp$ and the disjoint union symmetric monoidal structure on $\Fin^\simeq$. By construction, the fully faithful functor $\iota\colon \Sp \to \SSeq(\Sp)$, sending $X$ to the symmetric sequence $\iota(X)(0) = X$ and $\iota(X)(n) = 0$ for $n>0$ is canonically symmetric monoidal. In particular, $\SSeq(\Sp)$ is tensored over $\Sp$ via this inclusion.

The $\infty$-category $\SSeq(\Sp)$ can be equipped with a further monoidal structure, called the \emph{composition product}, obeying the formula 
\[ A \circ B = \bigoplus\limits_{n \geq 0} (A(n) \otimes B^{\otimes n})_{h\Sigma_n},\]
see \cite[Section 3.1]{Brantner} and \cite{Haugseng}.
It follows that $A \circ \iota(X)$ is in the essential image of $\iota$ for any symmetric sequence $A$, giving rise to an action of $\SSeq(\Sp)$ on $\Sp$, or equivalently, a monoidal functor 
\[ \free\colon \SSeq(\Sp) \to \End(\Sp), \quad A \mapsto [\free_A \colon X \mapsto \bigoplus_{n \geq 0} (A(n)\otimes X^{\otimes n})_{h\Sigma_n}]\]
where we view $\End(\Sp) = \Fun(\Sp,\Sp)$ as monoidal via the composition of functors.
It will be convenient to use the abbreviated notation $D_n^A(X) := (A(n)\otimes X^{\otimes n})_{h\Sigma_n}$ for the summands of $\free_A$.

For non-unital symmetric sequences $A$ and $B$, the above formula for the composition product simplifies to the following. For a finite set $I$ we have
\[
(A \circ B)(I) \simeq \bigoplus_{E \in \mathrm{Equiv}(I)} A(I/E) \otimes \bigotimes_{J \in I/E} B(J),
\]
where the sum is indexed over the set of equivalence relations $E$ on $I$. It will be important for us to make explicit the $\Sigma_I$-action on $(A\circ B)(I)$. To that end, first note that $\Sigma_I$ acts on the set $\Equiv(I)$ of equivalence relations on $I$ in the obvious fashion. For $E \in \Equiv(I)$ let $G_E$ denote the stabilizer of the action so that there are canonical induced maps $G_E \to \Sigma_{I/E}$ as well as $G_E \to \Sigma_J$ for all $J \in I/E$.
Then we have an equivalence of $\Sigma_I$-objects
\[ (A\circ B)(I) \simeq \bigoplus\limits_{E \in \Equiv(I)/\Sigma_I} \ind_{G_E}^{\Sigma_I} [A(I/E) \otimes \bigotimes_{J \in I/E} B(J)]\]
for the diagonal $G_E$ action on $A(I/E) \otimes \bigotimes_J B(J)$.

An \emph{operad} in $\Sp$ is defined to be an algebra object in the monoidal $\infty$-category $\SSeq(\Sp)$. We write $\Operad(\Sp) = \Alg(\SSeq(\Sp))$ for the $\infty$-category of such. An operad is said to be non-unital when its underlying symmetric sequence is non-unital. Since algebras in $\End(\Sp)$ are monads and $\free$ is monoidal, we obtain a functor
\[
\free\colon \Operad(\Sp) \lto \Monad(\Sp),\quad \O \mapsto \Sym_{\O}.
\]
We note that the monad $\free_\O$ associated to a non-unital operad $\O$ is a reduced functor.

\subsubsection*{The operadic suspension}
We write $\bS^{1}[1]$ for the symmetric sequence that assigns the spectrum $\bS^1$ to a singleton and 0 to a set of any cardinality other than 1. Observe that $\Sym_{\bS^{1}[1]}$ is the suspension functor $\Sigma \in \End(\Sp)$. As follows from the above formulas for the composition product, the object $\bS^{1}[1]$ has a monoidal inverse, namely $\bS^{-1}[1]$ with $\free_{\bS^{-1}[1]} \simeq \Sigma^{-1} \in \End(\Sp)$. Conjugating by the invertible object $\bS^{1}[1]$ therefore defines a monoidal automorphism \cite[Lemma 3.10]{Brantner}
\[
S\colon \SSeq(\Sp) \lto \SSeq(\Sp),\quad A \mapsto \bS^{-1}[1] \circ A \circ \bS^{1}[1].
\]
\begin{definition}
Since $S$ is monoidal it induces an equivalence $S\colon \Operad(\Sp) \to \Operad(\Sp)$. For an operad $\O$, we call $S\O$ its \emph{operadic suspension} and $S^{-1}\O$ its \emph{operadic desuspension}.
\end{definition}

\begin{remark}
Let us explicitly describe the underlying symmetric sequence of the operadic suspension of a non-unital operad $\O$. Consider the standard $n$-dimensional real representation $\R^n$ of the symmetric group $\Sigma_n$ given by permuting the standard basis vectors of $\R^n$. The diagonal $\Delta$ is a trivial subrepresentation; we write $\rho_n = \R^n/\Delta$ for the quotient. As usual, we write $S^V$ for the one-point compactification of a representation $V$ and $\bS^V$ for the suspension spectrum of $S^V$. There are then equivalences of representation spheres
\[
\bS^{\R^n} \simeq (\bS^1)^{\otimes n} \quad\quad \text{and} \quad\quad \bS^{\rho_n} \simeq \Sigma^{-1}(\bS^1)^{\otimes n},
\]
where the symmetric group $\Sigma_n$ acts by permuting the factors on the right-hand sides. As a consequence, we find equivalences $S\O(n) \simeq \bS^{\rho_n} \otimes \O(n)$. Applying this for $\O = \Sigma^\infty_+ \E_\infty^\nu$, we find that $\bS^\rho$, the symmetric sequence sending $n$ to $\bS^{\rho_n}$, is a non-unital operad and that $S\O$ is obtained from the pointwise tensor product of the operads $\bS^\rho$ and $\O$.

Clearly, the operad $\bS^\rho$ ought to be in the image of the functor $\Operad(\Spc_*) \to \Operad(\Sp)$. Indeed, Arone--Kankaanrinta \cite{aronekankaanrinta} and Ching--Salvatore \cite{chingsalvatore} both give explicit constructions of a \emph{sphere operad} in the category of pointed spaces that should provide such a desuspension of $\bS^\rho$. It would be desirable to have a comparison between their work and the discussion we gave above, but we do not pursue the matter here. We will discuss the unstable case further in Section \ref{subsec:unstableEn}.
\end{remark}

By construction, the monad associated to the operad $S\O$ is described by 
\[
\Sym_{S\mathcal{O}} \simeq \Sigma^{-1}\circ \Sym_{\O}\circ \Sigma.
\]
Moreover, since $\Alg_\O(\Sp) = \Alg_{\free_\O}(\Sp)$, there is a commutative square of $\infty$-categories
\[
\begin{tikzcd}
\Alg_{\mathcal{O}}(\Sp) \ar{d}{\mathrm{frgt}}\ar{r} & \Alg_{S\mathcal{O}}(\Sp) \ar{d}{\mathrm{frgt}} \\
\Sp \ar{r}{\Sigma} & \Sp
\end{tikzcd}
\]
obtained by letting the automorphism $\Sigma \colon \Sp \to \Sp$ act on $\Monad(\Sp)$ via conjugation.
Both horizontal functors in this diagram are equivalences, so the category of $S\O$-algebras is equivalent to that of $\O$-algebras via the suspension functor on the level of underlying objects. 

To conclude this section, we discuss a \emph{suspension morphism} $\sigma \colon \O \to S\O$ of a non-unital operad $\O$.
Let us first describe the effect $\sigma$ on the free algebra monads of $\O$ and $S\O$, respectively. To that end, let $T$ be a monad on $\Sp$ that is reduced, i.e. $T(0) \cong 0$. Then there are commuting squares
\[
\begin{tikzcd}
\Alg_T(\Sp) \ar{r}{\Sigma_T} & \Alg_T(\Sp) && \Alg_T(\Sp) \ar{d}{\fgt_T} & \Alg_T(\Sp) \ar{d}{\fgt_T}\ar{l}[swap]{\Omega_T} \\
\Sp \ar{u}{\Sym_T}\ar{r}{\Sigma} & \Sp \ar{u}{\Sym_T} && \Sp & \Sp \ar{l}{\Omega}
\end{tikzcd}
\]
of left and right adjoints, respectively, with $\Sigma_T$ and $\Omega_T$ denoting the suspension and loops functor internal to the pointed $\infty$-category $\Alg_T(\Sp)$. The unit of that adjunction supplies a map of monads on $\Sp$ of the form
\[
T \to \fgt_T \Omega_T \Sigma_T \free_T \simeq \Omega \fgt_T\free_T\Sigma = \Omega T \Sigma
\]
where the equivalence follows from the commutativity of the squares. In the specific case $T = \Sym_{\O}$ for a non-unital operad $\O$, we find a map of monads
\[
\sigma\colon \Sym_{\O} \lto \Omega \Sym_{\O} \Sigma \simeq \Sym_{S\O}.
\]
We claim that this map arises by applying the free functor to a map of operads $\O \to S\O$ for which we also write $\sigma$. We will not need a general construction here, but we will provide it in the specific case $\O = \E_n^{\mathrm{nu}}$ in \cref{subsec:suspensionE_n} below, see also \cref{Remark:goodwillie-calculus-differentiate-monads}. In particular, the map $\sigma$ yields a commutative diagram as follows:
\[
\begin{tikzcd}
\Alg_{\O}(\Sp) \ar[d,"\fgt_\O"] & \Alg_{S\O}(\Sp) \ar{l}[swap]{\sigma^*} \ar[d,"\fgt_{S\O}"] & \Alg_{\O}(\Sp) \ar{l}[swap]{\simeq}\ar[d,"\fgt_\O"] \\
\Sp & \Sp \ar[equals]{l} & \Sp \ar{l}{\Omega}
\end{tikzcd}
\]
Here the equivalence $\Alg_{\O}(\Sp) \simeq \Alg_{S\O}(\Sp)$ in the top right is the inverse of the equivalence described above. Thus, we may interpret the loop functor on $\Alg_{\O}(\Sp)$ as restriction along the morphism $\sigma$, up to an equivalence of categories.

\begin{remark}\label{Remark:goodwillie-calculus-differentiate-monads}
We recall that the formation of Goodwillie derivatives forms a functor
\[
\partial_*\colon \End(\Sp) \to \SSeq(\Sp)
\]
from endofunctors of $\Sp$ to symmetric sequences in $\Sp$. If this construction would be (lax) monoidal, then it would induce a functor from monads in $\Sp$ to operads in $\Sp$. This monoidality is essentially the statement of the chain rule for the derivatives of endofunctors of $\Sp$, but it has not quite been established in this form in the literature. A version of the chain rule that does accomplish this is the subject of ongoing work of Blans and Blom \cite{blansblom}. Moreover, in loc.\ cit.\ a monoidal equivalence $\partial_* \circ \free \simeq \id_{\SSeq(\Sp)}$ is established. One could therefore \emph{define} the suspended operad $S\O$ as $\partial_*(\Omega \free_\O \Sigma)$ and the suspension morphism as $\partial_*(\sigma \colon \free_\O \to \Omega \free_\O \Sigma)$. We will show later that this construction is equivalent to our construction of the map $\sigma\colon \O \to S\O$ in case $\O = \Sigma^\infty_+ \E_n^\nu$; note that we do not give a construction of $\sigma$ otherwise.
\end{remark}

\subsection{Symmetric sequences with nilpotent Euler classes}
\label{subsec:operadsEulerclasses}

The functor
\begin{equation*}
\SSeq(\Sp) \lto \Fun(\Sp,\Sp), \quad A \mapsto \Sym_A
\end{equation*}
is well known not to be fully faithful. Take for instance $A = \E_\infty$. Then there is a non-trivial map $\id = D^{\E_\infty}_1 \to D^{\E_\infty}_2$, as the space of such natural transformations is equivalent to $\Omega^\infty(\bS^{-1})^{tC_2} \simeq \Omega^{\infty+1}\bS_2$,\footnote{See the discussion around \eqref{dual-derivative} for this argument.} which contains the non-trivial element $\eta$. This induces a non-trivial self-map of $\free_{\E_\infty}$ which does not come from a self-map of the symmetric sequence $\E_\infty$. However, we will show in \cref{thm:SymffEuler} that the functor $A \mapsto \Sym_A$ \emph{does} become fully faithful after restricting its domain to a certain full subcategory of symmetric sequences, the ones with \emph{nilpotent Euler classes}, to which this section is devoted.

Let $T$ be a finite set and let us write $\rho_T = \R[T]/\Delta$ for the reduced standard representation of the symmetric group $\Sigma_T$. The inclusion of the origin into $\rho_T$ induces a $\Sigma_T$-equivariant map
\begin{equation*}
e_T\colon S^0 \to S^{\rho_T}
\end{equation*}
which we refer to as the \emph{Euler class}. 

\begin{definition}
\label{def:nilpotentEuler}
A symmetric sequence $A \in \SSeq(\Sp)$ has \emph{nilpotent Euler classes} if there exists a natural number $k$, called the order of nilpotence, with the following property: for each $n \geq 2$ and each subset $T \subseteq \{1, \ldots, n\}$ of cardinality at least two, the map
\begin{equation*}
A(n) \simeq A(n) \otimes S^0 \xrightarrow{\id \otimes e_T^k} A(n) \otimes S^{k\rho_T}
\end{equation*}
is nullhomotopic in the $\infty$-category $\Fun(B\Sigma_T,\Sp)$, where $A(n)$ is interpreted as an object of $\Fun(B\Sigma_T,\Sp)$ via restriction along the evident inclusion $\Sigma_T \subseteq \Sigma_n$. Note that $k$ should not depend on $n$ or $T$. An operad $\O \in \Operad(\Sp)$ is said to have nilpotent Euler classes if its underlying symmetric sequence does.
\end{definition}

\begin{example}
\begin{enumerate}
\item Suppose that $A$ is a symmetric sequence such that for every $n\geq 2$ the spectrum $A(n)$ is a $\Sigma_n$-free object, e.g.\ $A = \Sigma^\infty_+\E_1^\nu$. Then $A$ has nilpotent Euler classes with order of nilpotence equal to $1$. Indeed, for any $T \subseteq \{1,\dots,n\}$, the $\Sigma_T$ object $A(n)$ is a sum of free $\Sigma_T$-objects, hence the claim follows from the fact that the Euler class $e_T \colon S^0 \to S^{\rho_T}$ is nonequivariantly null when $|T|\geq 2$.
\item The symmetric sequence underlying the operad $\Sigma^\infty_+\E_\infty^\nu$ does not have nilpotent Euler classes. For example, taking homotopy orbits of the $\Sigma_2$-equivariant map
$
e_2^k\colon \bS^0 \to \bS^{k\rho_2}
$
yields the standard map
$
\Sigma^\infty \RP^\infty_+ \to \Sigma^\infty \RP^\infty_k,
$
which is not nullhomotopic for any (finite) value of $k$.
\end{enumerate}

\end{example}

\begin{lemma}
\label{lem:closureSSeqEuler}
The class of non-unital symmetric sequences with nilpotent Euler classes is closed under operadic (de)suspensions and contains the monoidal unit. Moreover, if $A$ is a non-unital symmetric sequence with nilpotent Euler classes, then so is its composition product $A \circ B$ with any non-unital symmetric sequence $B$. In particular, the class of non-unital symmetric sequences with nilpotent Euler classes forms a monoidal subcategory of $\SSeq(\Sp)$.
\end{lemma}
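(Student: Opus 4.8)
The plan is to treat the three assertions in turn, the composition product being the substantial one. For the monoidal unit $\one$ (with $\one(1) \simeq \bS$ and $\one(m) = 0$ otherwise) the defining condition is vacuous: since $\one(n) = 0$ for all $n \geq 2$, every map out of $\one(n)$ is nullhomotopic, so $\one$ has nilpotent Euler classes of any order. For the operadic (de)suspension I would use the identification $S^{\pm 1}A(n) \simeq \bS^{\pm\rho_n} \otimes A(n)$ recorded in the preliminaries. The Euler-class map for $S^{\pm 1}A(n)$ is then $\id \otimes (\id_{A(n)} \otimes e_T^k)$; since tensoring $\Sigma_T$-objects with the fixed restricted spectrum $\bS^{\pm\rho_n}$ preserves nullhomotopies, and $\id_{A(n)} \otimes e_T^k$ is null by hypothesis, the same order of nilpotence $k$ works for $S^{\pm 1}A$.

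The heart is closure under the composition product. I would start from the explicit $\Sigma_I$-equivariant decomposition
\[ (A \circ B)(I) \simeq \bigoplus_{[E] \in \Equiv(I)/\Sigma_I} \ind_{G_E}^{\Sigma_I} \left[ A(I/E) \otimes \bigotimes_{J \in I/E} B(J) \right] \]
from the preliminaries, specialise to $I = \{1,\dots,n\}$, and restrict the $\Sigma_I$-action to $\Sigma_T$ for a subset $T$ with $|T| \geq 2$. Regrouping by $\Sigma_T$-orbits of equivalence relations rewrites the restriction as a sum of terms $\ind_{H}^{\Sigma_T} M_E$, where $H = \mathrm{Stab}_{\Sigma_T}(E)$ and $M_E = A(I/E) \otimes \bigotimes_J B(J)$. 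Since the Euler class $e_T$ respects direct sums, it suffices to kill $e_T^k$ on each such term. By the projection formula $\ind_H^{\Sigma_T}(M_E) \otimes S^{k\rho_T} \simeq \ind_H^{\Sigma_T}(M_E \otimes \res_H S^{k\rho_T})$, natural in the $\Sigma_T$-spectrum $S^{k\rho_T}$, the problem reduces to showing $\id_{M_E} \otimes \res_H e_T^k$ is nullhomotopic in $\Fun(BH,\Sp)$, after which $\ind_H^{\Sigma_T}$ preserves nullity.

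The final, representation-theoretic step is to decompose $\res_H \rho_T$. Writing $\overline{T} \subseteq I/E$ for the set of blocks meeting $T$ and $T_b = T \cap J_b$ for $b \in \overline{T}$, a direct computation gives an $H$-equivariant splitting $\res_H \rho_T \simeq \rho_{\overline{T}} \oplus \bigoplus_{b \in \overline{T}} \rho_{T_b}$, under which $e_T$ factors as the external product of $e_{\overline{T}}$ (acting through the factor $A(I/E)$ via $H \to \Sigma_{\overline{T}} \hookrightarrow \Sigma_{I/E}$) and the classes $e_{T_b}$ (acting through the factors $B(J_b)$). Multiplicativity of Euler classes over direct sums then lets me conclude from a single null tensor factor. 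If $T$ meets at least two blocks ($|\overline{T}| \geq 2$), the map $\id_{A(I/E)} \otimes e_{\overline{T}}^k$ is null by the hypothesis on $A$, after restricting the $\Sigma_{\overline{T}}$-nullhomotopy to $H$, and this kills the whole term; if instead $T$ lies in a single block $J_0$, so that $|T \cap J_0| = |T| \geq 2$, then $\overline{T}$ is a point, $e_{\overline{T}}$ is trivial, and the intra-block factor $\id_{B(J_0)} \otimes e_T^k$ is null by the corresponding hypothesis on the inner factor $B$. Taking $k = \max(k_A, k_B)$ handles all terms uniformly; combined with the unit case this yields the monoidal subcategory.

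The main obstacle I anticipate is the equivariant bookkeeping: correctly identifying the stabilisers $H$ and the induced $H$-action on each $M_E$, and verifying that the splitting of $\res_H \rho_T$ into its inter-block summand $\rho_{\overline{T}}$ and intra-block summands $\rho_{T_b}$ is genuinely $H$-equivariant and compatible with the tensor factors $A(I/E)$ and $\bigotimes_J B(J)$. The case distinction $|\overline{T}| \geq 2$ versus $|\overline{T}| = 1$ is the conceptual crux: it is precisely the intra-block (single-block) contribution, on which $\Sigma_T$ acts trivially through $A(I/E)$, that must be absorbed by the Euler nilpotence of the inner factor, so the argument naturally produces closure of the entire class under $\circ$ rather than control by the outer factor alone.
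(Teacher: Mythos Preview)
Your argument is sound and establishes the final assertion, that the class of non-unital symmetric sequences with nilpotent Euler classes forms a monoidal subcategory under the composition product. However, you should be aware of a discrepancy with the lemma as stated: it asserts the asymmetric claim that $A \circ B$ has nilpotent Euler classes whenever $A$ does, with \emph{no} hypothesis on $B$. Your proof does not establish this, and indeed it is false. For instance, with $A = \Sigma^\infty_+\E_1^{\nu}$ and $B = \Sigma^\infty_+\E_\infty^{\nu}$, the summand of $(A\circ B)(2)$ indexed by the indiscrete partition is $A(1)\otimes B(2) \simeq \bS$ with trivial $\Sigma_2$-action, on which no power of $e_2$ is null. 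So your instinct in the last paragraph---that the single-block case genuinely requires nilpotence of the inner factor $B$---is correct, and the monoidal-subcategory statement is all that is really available (and all that is used later, in the proof of Theorem~\ref{thm:SymffEuler}).

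Your case analysis, based on the number $|\overline{T}|$ of blocks meeting $T$, differs from the paper's, which instead splits according to whether the stabilizer $G_E^T = H$ acts transitively on $T$. When $H$ is non-transitive the paper dispenses with any nilpotence hypothesis altogether: the restriction $\res_H \rho_T$ then has a nonzero fixed vector, so $\res_H e_T$ is already null. When $H$ is transitive, the paper shows the $T_b$ all have the same size $b$, identifies $H \cong \Sigma_b \wr \Sigma_a$, and uses essentially your splitting $\rho_T \cong \rho_a \oplus \bigoplus \rho_b$ together with the nilpotence of $A$ on the $\rho_a$-factor. The paper's argument, however, tacitly overlooks the transitive case $a = 1$ (equivalently $|\overline{T}| = 1$, your second case): there $\rho_a = 0$ and the appeal to $A$ yields nothing. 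This is exactly where you invoke $B$, and your treatment is the correct one. The paper's fixed-point trick in the non-transitive case is a pleasant shortcut, but since your argument already handles those summands via $A$ whenever $|\overline{T}|\ge 2$, nothing is lost by your route.
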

\begin{proof}
The monoidal unit of $\SSeq(\Sp)$ is the sequence that associates $\bS$ to the singleton and $0$ to any finite set of cardinality other than one, and consequently has nilpotent Euler classes.

Now let $A$ be a symmetric sequence. Observe that the Euler classes
\[
(SA)(n) \xrightarrow{e^k_T} (SA)(n) \otimes S^{k\rho_T}
\]
for $SA$ are obtained from those of $A$ by smashing with the invertible object $\bS^{\rho_n}$ of $\Fun(B\Sigma_n,\Sp)$. Hence the operadic suspension $SA$ of $A$ has nilpotent Euler classes if and only if $A$ itself does.

Finally, consider a composition $A \circ B$ where $A$ has nilpotent Euler classes, say with order of nilpotence $k$. Let $T$ be a subset of $I := \{1, \ldots, n\}$. As in our discussion of the composition product above, the restriction of the $\Sigma_I$-object
\[
(A \circ B)(I) \cong \bigoplus_{E \in \mathrm{Equiv}(I)} A(I/E) \otimes \bigotimes_{J \in I/E} B(J)
\]
to $\Sigma_T \subseteq \Sigma_n$ splits into summands corresponding to the different orbits of $\mathrm{Equiv}(I)/\Sigma_T$. Consider an $E \in \mathrm{Equiv}(I)$ with stabilizer $G_E^T \leq \Sigma_T$. Then it will suffice to show that
\[
A(I/E) \otimes \bigotimes_{J \in I/E} B(J) \xrightarrow{e^k_T} A(I/E) \otimes \bigotimes_{J \in I/E} B(J) \otimes S^{k\rho_T}
\]
is $G_E^T$-equivariantly null, where we recall $k$ to be the order of nilpotence of $A$. If $G_E^T$ is a non-transitive subgroup of $\Sigma_T$, then this is immediate from the fact that the restriction of the Euler class $e_T$ itself to $G_E^T$ is null: indeed, then the representation $\rho_T$ admits a nonzero $G_E^T$-fixed point. If $G_E^T$ is a transitive subgroup of $\Sigma_T$, then the restriction of the equivalence relation $E$ to $T$ must be a partition into blocks of equal size; let us say there are $a$ blocks of size $b$. Then $G_E^T \cong \Sigma_b \wr \Sigma_a$ and $T/E \cong \{1, \ldots, a\}$. Write $q\colon G_E^T \to \Sigma_a$ for the quotient by the normal subgroup $\Sigma_b^a$. 

The standard representation $\R^T$ of $\Sigma_T$, when restricted to $\Sigma_b \wr \Sigma_a$, can be written as $\R^a \otimes \R^b$ in an evident way. As a $\Sigma_b$-representation, we can split $\R^b$ as $\Delta \oplus \rho_b$, simply by averaging. Hence we find an isomorphism of representations
\[
\R^a \otimes \R^b \cong \R^a \oplus (\R^a \otimes \rho_b).
\]
From this we obtain isomorphisms of $\Sigma_b \wr \Sigma_a$-objects
\[
\rho_T \cong \rho_a \oplus \bigoplus_{i=1}^a \rho_b \quad\quad \text{and hence} \quad\quad S^{\rho_T} \cong S^{\rho_a} \otimes (S^{\rho_b})^{\otimes a}.
\]
We now observe that the map $e_k^T$ under consideration admits a tensor factor
\[
A(I/E) \to A(I/E) \otimes S^{k\rho_a}
\]
on which the subgroup $\Sigma_b^a$ acts trivially. More precisely, this factor is in the image of $q^*$. Hence it suffices to show that this map is $\Sigma_a$-equivariantly null. That this is so follows from the assumption that $A$ has nilpotent Euler classes of order $k$, after identifying $T/E$ with its image in $I/E$ and $\Sigma_a$ with the corresponding subgroup of the permutation group of $I/E$.
\end{proof}

Our main source of operads with nilpotent Euler classes is the following:

\begin{proposition}
\label{prop:EnoperadEuler}
For each $m \geq 1$, the operad $\Sigma^\infty_+ \E_m^\nu$ has nilpotent Euler classes.
\end{proposition}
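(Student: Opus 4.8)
The plan is to show that the order of nilpotence can be taken to be $k=m$, by producing for each relevant $T$ a single geometric equivariant map that splits off the Euler class. I would first replace $\Sigma^\infty_+\E_m^\nu(n)$ by $\Sigma^\infty_+\mathrm{Conf}_n(\R^m)$, using the $\Sigma_n$-equivariant equivalence $\E_m^\nu(n)\simeq\mathrm{Conf}_n(\R^m)$ for $n\geq 1$ that records the centres of the little cubes; since the nilpotence condition only involves $n\geq 2$ this loses nothing.

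The geometric heart is the construction, for a fixed $T\subseteq\{1,\dots,n\}$ with $|T|\geq 2$, of a $\Sigma_T$-equivariant map $\phi\colon\mathrm{Conf}_n(\R^m)\to S(m\rho_T)$ to the unit sphere of $m\rho_T$. I would define it by restricting a configuration to its $T$-indexed points, subtracting their barycentre $\bar x=|T|^{-1}\sum_{i\in T}x_i$, and recording $(x_i-\bar x)_{i\in T}$. This tuple lies in $\rho_T\otimes\R^m$ and is nonzero precisely because the points are pairwise distinct; here I use the $\Sigma_T$-equivariant identification $\rho_T\otimes\R^m\cong m\rho_T$ (as $\R^m$ carries the trivial $\Sigma_T$-action) together with the equivariant deformation retraction of $(m\rho_T)\setminus\{0\}$ onto $S(m\rho_T)$. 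The barycentre, the difference, and the normalization are all $\Sigma_T$-natural, so $\phi$ is equivariant for the restriction of the $\Sigma_n$-action to $\Sigma_T$.

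Then I would invoke the standard $\Sigma_T$-equivariant cofiber sequence of pointed spaces $S(m\rho_T)_+\xrightarrow{c}S^0\xrightarrow{e_T^m}S^{m\rho_T}$, where $c$ is the collapse and $e_T^m=e_{m\rho_T}$ is the $m$-fold smash power of the Euler class. Tensoring with $X:=\Sigma^\infty_+\mathrm{Conf}_n(\R^m)$ yields a cofiber sequence $X\otimes S(m\rho_T)_+\xrightarrow{\id\otimes c}X\xrightarrow{\id\otimes e_T^m}X\otimes S^{m\rho_T}$ in $\Fun(B\Sigma_T,\Sp)$. The map $\Sigma^\infty_+(\id,\phi)\colon X\to X\otimes S(m\rho_T)_+$ is a $\Sigma_T$-equivariant section of $\id\otimes c$, since projecting $\mathrm{Conf}_n(\R^m)\times S(m\rho_T)$ back to $\mathrm{Conf}_n(\R^m)$ undoes $(\id,\phi)$. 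Because $(\id\otimes e_T^m)\circ(\id\otimes c)\simeq 0$ in a cofiber sequence, precomposing with this section forces $\id\otimes e_T^m$ to be null. As $k=m$ works uniformly in $n$ and $T$, this proves that $\Sigma^\infty_+\E_m^\nu$ has nilpotent Euler classes, of order $m$ (recovering, for $m=1$, the order $1$ coming from the free $\Sigma_n$-action).

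The main obstacle is really just the geometric construction of $\phi$ and the bookkeeping that matches the $m$-fold smash power $e_T^m$ with the Euler class $e_{m\rho_T}$ of $\rho_T\otimes\R^m$. Once $\phi$ is in hand, the splitting of the Euler class is a formal consequence of the cofiber sequence, and equivariance holds on the nose, so I do not anticipate further difficulties.
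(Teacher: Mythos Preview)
Your proposal is correct and essentially identical to the paper's argument: both construct the same nowhere-vanishing $\Sigma_T$-equivariant map from $\mathrm{Conf}_n(\R^m)$ to $m\rho_T$ by recording the $T$-indexed points and killing their common centre (you subtract the barycentre, the paper applies the quotient $\R^T\to\rho_T$ coordinatewise---these agree after identifying $\rho_T$ with the hyperplane $\{\sum v_i=0\}$). The only cosmetic difference is in how the nullhomotopy is packaged: you normalise to $S(m\rho_T)$ and invoke the cofiber sequence $S(m\rho_T)_+\to S^0\to S^{m\rho_T}$, whereas the paper keeps the unnormalised section and scales it to infinity inside the Thom space, which is the same argument in Thom-space language.
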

\begin{proof}
We will show that $k=m$ satisfies the condition of Definition \ref{def:nilpotentEuler}. Recall that $\E_m^\nu(n)$ is (homotopy equivalent to) the configuration space $\mathrm{Conf}_n(\R^m)$ of $n$ points in $\R^m$. The map
\[
\Sigma^\infty_+\E_m^\nu(n) \otimes S^0 \xrightarrow{e_T^m} \Sigma^\infty_+\E_m^\nu(n) \otimes S^{m\rho_T}
\]
is the infinite suspension of the map of pointed $\Sigma_T$-spaces
\[
\mathrm{Conf}_n(\R^m)_+ \xrightarrow{e_T^m} \mathrm{Conf}_n(\R^m)_+ \wedge S^{m\rho_T}.
\]
The right-hand side may be interpreted as the Thom space of the $\Sigma_T$-equivariant (trivial) vector bundle $\mathrm{Conf}_n(\R^m) \times \rho_T^{\oplus m}$ over the configuration space. The map $e_T^m$ then includes $\mathrm{Conf}_n(\R^m)$ as the zero-section of this bundle and sends the disjoint basepoint $+$ to the basepoint `at infinity'. To show that the map $e_T^m$ is equivariantly nullhomotopic, it suffices to construct a nowhere vanishing equivariant section of this bundle or, in other words, a nowhere vanishing equivariant function
\[
f = (f_1, \ldots, f_m)\colon \mathrm{Conf}_n(\R^m) \lto \rho_T^{\oplus m}.
\]
Indeed, multiplying such a section by a scalar $t$ and letting $t$ range from 0 to $\infty$ then defines a homotopy from the zero-section to the constant map sending all of $\mathrm{Conf}_n(\R^m)$ to $\infty$.

Let $\mathrm{Conf}_T(\R^m)$ denote the space of configurations of the set $T$ inside $\R^m$. Then it suffices to construct a nowhere vanishing equivariant map 
\[
f = (f_1, \ldots, f_m)\colon \mathrm{Conf}_T(\R^m) \lto \rho_T^{\oplus m},
\]
since the previous case follows after precomposing with the forgetful map $\mathrm{Conf}_n(\R^m) \to \mathrm{Conf}_T(\R^m)$. For $1 \leq i \leq m$, let 
\[
g_i\colon \mathrm{Conf}_T(\R^m) \lto \R^T
\]
be the map taking the $i$th coordinate of every point in a given configuration. Set $f_i$ to be the composition of $g_i$ with the quotient map $\R^T \to \rho_T$. Since the $g_i$ cannot all take values in the diagonal simultaneously, the map $f$ has no zeros.
\end{proof}

\subsection{From operads to monads}

The goal of this section is to prove the following:
\begin{theorem}
\label{thm:SymffEuler}
The functors
\[
\Sym\colon \SSeq(\Sp) \lto \End(\Sp) \quad\quad \text{and} \quad\quad \Sym\colon \Operad(\Sp) \lto \Monad(\Sp)
\]
become fully faithful when restricted to the full subcategories of non-unital symmetric sequences (resp.\ non-unital operads) with nilpotent Euler classes.
\end{theorem}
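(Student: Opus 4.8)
The plan is to first reduce the operad statement to the statement about symmetric sequences, and then to prove the latter by a layerwise computation of mapping spectra. For the reduction, recall from \cref{subsec:prelim} that $\free\colon\SSeq(\Sp)\to\End(\Sp)$ is monoidal for the composition product on the source and composition of endofunctors on the target, and that operads (resp.\ monads) are exactly the associative algebras in these monoidal $\infty$-categories, with the induced functor $\Operad(\Sp)\to\Monad(\Sp)$ being $\free$ on algebras. By \cref{lem:closureSSeqEuler} the non-unital symmetric sequences with nilpotent Euler classes form a monoidal subcategory $\mathcal{N}$ (it contains the unit and is closed under $\circ$). For a fully faithful strong monoidal functor whose restriction lands in such a subcategory, the induced functor on associative algebras is again fully faithful: a map of algebras is a map of underlying objects compatible with the bar resolution, so $\Map$ of algebras is a limit over $\Delta$ of the spectra $\map(\O^{\circ\bullet},\P)$, and since all tensor powers $\O^{\circ\bullet}$ remain in $\mathcal N$ by closure, full faithfulness on $\mathcal N$ propagates to the algebra categories. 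Thus it suffices to prove the first (symmetric sequence) assertion.

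For $A,B\in\mathcal N$ I would show that the comparison map $\Map_{\SSeq(\Sp)}(A,B)\to \mathrm{Nat}(\free_A,\free_B)$ is an equivalence. The source is $\prod_n\Map^{h\Sigma_n}(A(n),B(n))$, using $\Fin^\simeq=\coprod_n B\Sigma_n$. On the target, writing $\free_A=\bigoplus_n D_n^A$ as a coproduct gives $\mathrm{Nat}(\free_A,\free_B)\simeq\prod_n\mathrm{Nat}(D_n^A,\free_B)$, so it is enough to prove, for each fixed $n$, that the inclusion of the diagonal summand $D_n^B\subseteq\free_B$ induces an equivalence $\mathrm{Nat}(D_n^A,\free_B)\simeq\Map^{h\Sigma_n}(A(n),B(n))$. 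The diagonal contribution is already correct: the classification of $n$-homogeneous functors of spectra as $\Fun(B\Sigma_n,\Sp)$ via the $n$-th cross effect yields $\mathrm{Nat}(D_n^A,D_n^B)\simeq\Map^{h\Sigma_n}(A(n),B(n))$. The sub-diagonal terms vanish: $D_n^A$ is $n$-homogeneous, hence $P_{n-1}D_n^A=0$, while $D_m^B$ is $(n-1)$-excisive for $m<n$; since $P_{n-1}$ is a reflective localization onto $(n-1)$-excisive functors, $\mathrm{Nat}(D_n^A,D_m^B)\simeq\mathrm{Nat}(P_{n-1}D_n^A,D_m^B)=0$. Everything therefore rests on the super-diagonal terms $m>n$.

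The heart of the argument is to show $\mathrm{Nat}(D_n^A,D_m^B)=0$ for $m>n$. Using $D_n^A=(A(n)\otimes(-)^{\otimes n})_{h\Sigma_n}$ one has $\mathrm{Nat}(D_n^A,G)\simeq\map(A(n),\mathrm{Nat}((-)^{\otimes n},G))^{h\Sigma_n}$, reducing matters to $\mathrm{Nat}((-)^{\otimes n},D_m^B)$. Generalizing the computation $\mathrm{Nat}(\id,D_2^{\E_\infty})\simeq(\bS^{-1})^{tC_2}$ recalled earlier, this mapping spectrum decomposes over surjections $\phi\colon\{1,\dots,m\}\twoheadrightarrow\{1,\dots,n\}$, the summand of $\phi$ being assembled from $B(m)$ together with Tate constructions indexed by the cardinalities of the fibers of $\phi$, with isotropy the wreath products $\Sigma_b\wr\Sigma_a$ appearing in the proof of \cref{lem:closureSSeqEuler}. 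When $m>n$ every surjection has a fiber of cardinality at least two, so each summand carries a nontrivial Tate construction $(-)^{t\Sigma_d}$ with $d\geq2$. Here the nilpotent Euler class hypothesis on $B$ enters decisively: for a finite group $G$ and a $G$-spectrum $Y$ on which some power $e^k$ of the relevant Euler class acts nullhomotopically, one has $\widetilde{E}G\otimes Y\simeq\mathrm{colim}_j\,(S^{j\rho}\otimes Y)=0$, because the transition maps are multiplication by $e$ and every $k$-fold composite is null, so the filtered colimit vanishes; hence $Y^{tG}=0$. The subgroup bookkeeping is precisely that of \cref{lem:closureSSeqEuler}, so the nilpotence of the Euler classes of $B$ annihilates each super-diagonal summand, leaving only the diagonal and completing the identification.

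The main obstacle I anticipate is twofold. First, one must make the decomposition of $\mathrm{Nat}((-)^{\otimes n},D_m^B)$ over surjections genuinely precise and identify each Tate construction together with its isotropy group; this is the real functor-calculus input, and it is where the $\E_\infty$ footnote computation must be upgraded to arbitrary coefficients $B(m)$ and to the wreath-product subgroups rather than a single $\Sigma_2$. Second, there is a convergence subtlety: since $\free_B=\bigoplus_m D_m^B$ is an \emph{infinite coproduct} and not a product, one cannot immediately pass from the layerwise vanishing $\mathrm{Nat}(D_n^A,D_m^B)=0$ to the vanishing of $\mathrm{Nat}(D_n^A,\bigoplus_{m>n}D_m^B)$. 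I expect the uniformity of the order of nilpotence $k$, emphasized in \cref{def:nilpotentEuler} as being independent of $n$ and $T$, to be exactly the ingredient that controls this: it renders the Tate-vanishing uniform across all arities, so the coproduct and the Goodwillie approximations $\{P_N\free_B\}$ can be compared without a residual $\lim^1$-type obstruction. Establishing this uniform vanishing, rather than the individual layer identifications, is the step I would budget the most effort for.
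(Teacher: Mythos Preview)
Your outline matches the paper's proof closely: the reduction from operads to symmetric sequences via the monoidal-subcategory lemma, the decomposition by arity, and the treatment of the diagonal and sub-diagonal contributions are all identical. The difference is in how the super-diagonal part and the convergence issue you flag are handled. Rather than computing $\mathrm{Nat}(D_n^A, D_m^B)$ layer by layer via a Tate-vanishing argument and then separately controlling the infinite coproduct, the paper uses \emph{dual} Goodwillie calculus to do both at once: since $D_n^A$ is $n$-excisive, one has $\mathrm{Nat}(D_n^A, G) \simeq \mathrm{Nat}(D_n^A, P^nG)$ where $P^nG$ is the universal $n$-excisive functor mapping \emph{into} $G$, and the key lemma shows $P^n\bigl(\bigoplus_{m>n} D_m^B\bigr)=0$ directly. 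This is done by computing the dual derivatives $\partial^\ell$ (for $\ell\le n$) of the entire sum as an inverse limit; the transition maps in that tower are tensoring with inverse Euler classes, so the uniform order of nilpotence $k$ makes every $k$-fold composite null and the tower pro-zero. This is exactly the uniformity mechanism you anticipate, but packaged so that the infinite direct sum over $m$ is carried along inside a single inverse limit and no separate $\varprojlim^1$-type argument is needed. Your formulation via $\widetilde{E}G\otimes Y \simeq \mathrm{colim}_j\,S^{j\rho}\otimes Y$ is genuine-equivariant language and does not literally apply in the Borel setting $\Fun(B\Sigma_n,\Sp)$ in which the paper works; the inverse-limit description of the dual derivative is what replaces it there.
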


Our argument is inspired by \cite[Appendix B]{heuts}, where the corresponding result is proved for symmetric sequences and operads in $T(n)$-local spectra (where the hypothesis on nilpotent Euler classes is unnecessary). As in loc.\ cit., we will use \emph{dual Goodwillie calculus} for endofunctors of $\mathrm{Sp}$, which is nothing but Goodwillie calculus applied to the opposite category $\Sp^{\mathrm{op}}$, cf.\ \cite[Appendix A]{heuts}. Goodwillie calculus itself supplies for any functor $F\colon \Sp \to \Sp$ a tower of approximations from the right
\begin{equation*}
F \to \cdots \to P_n F \to P_{n-1} F \to \cdots \to P_1 F \to P_0 F
\end{equation*}
with the property that $F \to P_n F$ is the initial map from $F$ to an $n$-excisive functor. Dual Goodwillie calculus then gives a cotower of approximations from the left
\[
P^0 F \to \cdots \to P^{n-1} F \to P^n F \to \cdots \to F
\]
with the property that $P^n F \to F$ is the terminal map to $F$ from an $n$-excisive functor.\footnote{Since $\Sp$ is stable, there is no distinction between $n$-excisive and $n$-coexcisive functors.}

The key to our proof of \cref{thm:SymffEuler} is the following observation:

\begin{lemma}
\label{lem:dualcalcEuler}
If the symmetric sequence $B$ has nilpotent Euler classes, then the evident inclusion
\[
\bigoplus_{k=1}^n D_k^B \lto  \bigoplus_{k=1}^\infty D_k^B = \Sym_B
\]
induces an equivalence
\[
\bigoplus_{k=1}^n D_k^B  \xrightarrow{\,\simeq\,} P^n\Sym_B.
\]
\end{lemma}
\begin{remark}
The functor $P_n$ commutes with filtered colimits, making it straightforward to calculate that $P_n\Sym_B$ is also given by the sum of the first $n$ homogeneous layers. However, the dual approximation $P^n$ generally does \emph{not} commute with filtered colimits. Nonetheless, the lemma shows that it commutes with the infinite direct sum $\bigoplus_{n \geq 1} D_n^B$ in case $B$ has nilpotent Euler classes.
\end{remark}

We will prove Lemma \ref{lem:dualcalcEuler} after we show how it implies Theorem \ref{thm:SymffEuler}:

\begin{proof}[Proof of Theorem \ref{thm:SymffEuler}]
By passing to algebra objects, \cref{lem:closureSSeqEuler} implies that it suffices to show that the functor $\free \colon \SSeq(\Sp) \to \End(\Sp)$ is fully faithful when restricted to the full subcategory of symmetric sequences with nilpotent Euler classes.
To that end, we consider the commutative diagram
\[\begin{tikzcd}
	\Map_{\SSeq(\Sp)}(A,B) \ar[r] \ar[d,"\simeq"'] & \Map_{\End(\Sp)}(\free_A,\free_B) \ar[d,"\simeq"] \\
	\prod\limits_{n\geq1} \Map_{\Sp^{B\Sigma_n}}(A(n),B(n)) \ar[dr, bend right=17,"\simeq"] & \prod\limits_{n\geq 1} \Map_{\End(\Sp)}(D_n^A,\free_B) \\
	& \prod\limits_{n\geq1} \Map_{\End(\Sp)}(D_n^A,D_n^B) \ar[u] 
\end{tikzcd}\]
in which we aim to show that the upper horizontal map is an equivalence. The left vertical map is an equivalence since $\Fin^\simeq$ is equivalent to the coproduct of the categories $B\Sigma_n$ and the right downward vertical map is an equivalence since $\free_A$ is the coproduct of the functors $D^A_n$. The diagonal arrow is an equivalence by Goodwillie's classification of homogenous functors. It hence remains to show that the lower right vertical arrow is also an equivalence. By definition, it is given by the product over $n\geq 1$ of the following composites.
\[ \Map(D_n^A,D_n^B) \lto \Map(D_n^A,\bigoplus_{k=1}^n D_k^B) \lto \Map(D_n^A,P^n\free_B) \lto \Map(D_n^A,\free_B)\]
The first map in this composite is an equivalence since for $k<n$, $D_k^B$ is $k$-excisive and $P_k(D^A_n) = 0$. The second map is an equivalence by \cref{lem:dualcalcEuler} and the third map is an equivalence since $D_n^A$ is $n$-excisive.
This completes the argument that $\Sym$ is fully faithful on symmetric sequences with nilpotent Euler classes and hence the proof of the theorem.
\end{proof}

\begin{proof}[Proof of \cref{lem:dualcalcEuler}]
It will suffice to show that $P^n\bigl(\bigoplus_{k>n} D_k^B\bigr) \cong 0$. We can reduce further to checking that the dual derivative $D^{\ell}\bigl(\bigoplus_{k>n} D_k^B\bigr)$ vanishes for each $\ell \leq n$. Generally, the dual derivative $D^{\ell} F$ of a functor $F \in \End(\Sp)$ may be constructed as follows. First one forms the \emph{cocross effect} of $F$, which is the functor of $\ell$ variables defined as the total cofibre
\[
\mathrm{cr}^{\ell} F(X_1, \ldots, X_{\ell}) := \mathrm{tcof}(F \circ \mathcal{X}),
\]
where $\mathcal{X}$ is the cube
\[
\mathcal{X}\colon \mathcal{P}(\ell) \rightarrow \Sp\colon T \mapsto \bigoplus_{t \in T} X_t.
\]
Here $\mathcal{P}(\ell)$ is the power set of $\{1, \ldots, \ell\}$ regarded as a poset under inclusion. Now one colinearizes $\mathrm{cr}^{\ell} F$ in each variable to obtain a multilinear functor $\partial^{\ell} F$ described by
\[
(\partial^{\ell} F)(X_1, \ldots, X_\ell) := \varprojlim_j \Sigma^{j\ell}(\mathrm{cr}^{\ell} F)(\Sigma^{-j} X_1, \ldots, \Sigma^{-j} X_\ell).
\]
Finally, the dual derivative is then
\[
D^{\ell} F(X) = \partial^{\ell} F(X, \ldots, X)^{h\Sigma_{\ell}}.
\]
We are analyzing the specific case $F = \bigoplus_{k>n} D_k^B$. To show that $D^{\ell} F$ vanishes, we will argue that $\partial^{\ell} F$ vanishes. The cocross effect $\mathrm{cr}^{\ell} F(X_1, \ldots, X_{\ell})$ is easily seen to be the direct sum over $k > n$ of the terms
\[
\Bigl(\bigoplus_{f\colon \mathbf{k} \to \mathbf{l}} B(k) \otimes \bigotimes_{i=1}^{\ell} X_i^{\otimes f^{-1}(i)}\Bigr)_{h\Sigma_k}.
\]
Here the sum is indexed over surjections $f\colon \{1, \ldots, k\} \to \{1, \ldots, \ell\}$. As a consequence, the inverse system $\Sigma^{j\ell}(\mathrm{cr}^{\ell} F)(\Sigma^{-j} X_1, \ldots, \Sigma^{-j} X_\ell)$ is the direct sum over $k > n$ of the homotopy $\Sigma_k$-orbits of the inverse systems in $j$
\[
\bigoplus_{f\colon \mathbf{k} \to \mathbf{l}} B(k) \otimes \bigotimes_{i=1}^{\ell} \bS^{-j\rho_{f^{-1}(i)}} \otimes \bigotimes_{i=1}^{\ell} X_i^{\otimes f^{-1}(i)}.
\]
Here the spectrum $\bigotimes_{i=1}^{\ell} \bS^{-j\rho_{f^{-1}(i)}}$ denotes a smash product of (virtual) representation spheres corresponding to the stabilizer $\Sigma_{f^{-1}(1)} \times \cdots \times \Sigma_{f^{-1}(\ell)}$ of a given surjection $f$. We conclude that the expression above is induced, as a spectrum with $\Sigma_k$-action, from a Young subgroup of the form $\Sigma_{k_1} \times \cdots \times \Sigma_{k_{\ell}}$, with $k_1 + \cdots + k_{\ell} = k$. Since $k > \ell$ there exists an $i$ with $k_i \geq 2$.

The conclusion of the lemma will follow if we argue that there exists a constant $C$, independent of $k$ and $f$, such that each of the maps 
\[
B(k) \otimes \bigotimes_{i=1}^{\ell} \bS^{-C\rho_{k_i}} \to  B(k)
\]
is null in $\Fun(B\Sigma_{k_1} \times \cdots \times B\Sigma_{k_{\ell}},\Sp)$. But this map admits a factor of the form
\[
B(k) \otimes \bS^{-C\rho_{k_i}} \to B(k)
\]
with $k_i \geq 2$, which is null if $C$ is at least the order of nilpotence of the Euler classes of $B$.
\end{proof}

\subsection{The suspension morphism of $\E_n$}
\label{subsec:suspensionE_n}

The main result of Section \ref{sec:susploopEn} was Theorem \ref{thm:suspensionEn}, providing two different factorizations of the suspension functor on the $\infty$-category of $\E_n$-algebras. We will now see how, in the stable context, this statement can also be considered as one about the $\E_n$-operad itself. In the following theorem we have abbreviated the notation $\Sigma^\infty_+\E_n^{\mathrm{nu}}$ to simply $\E_n$, and similarly for the other operads. 

\begin{theorem}
\label{thm:factorizationsuspension}
There is an essentially unique commutative square
\[
\begin{tikzcd}
\E_n \ar{r}{\iota_n}\ar{d}[swap]{\beta_n}\ar{dr}{\sigma} & \E_{n+1} \ar{d}{\beta_{n+1}} \\
S\E_{n-1} \ar{r}[swap]{S\iota_{n-1}} & S\E_n
\end{tikzcd}
\]
of non-unital operads in $\Sp$ such that the resulting diagram of $\infty$-categories of algebras
\[
\begin{tikzcd}
\Alg_{\E_n}^{\mathrm{nu}}(\Sp) \ar{r}\ar{d}\ar{dr} & \Alg_{\E_{n+1}}^{\mathrm{nu}}(\Sp) \ar{d} \\
\Alg_{S\E_{n-1}}^{\mathrm{nu}}(\Sp) \ar{r} & \Alg_{S\E_n}^{\mathrm{nu}}(\Sp)
\end{tikzcd}
\]
is equivalent to the diagram of \cref{thm:suspensionEn} in the special case $\C = \Sp$.
\end{theorem}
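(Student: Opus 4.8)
The plan is to reduce the entire statement---existence, commutativity, and essential uniqueness of the operadic square---to the corresponding statement at the level of monads, via the full faithfulness result of \cref{thm:SymffEuler}, and then to realize that monad square by the right-adjoint form of the algebra diagram we have already produced in \cref{cor:loopsEn}. First I would record that all four operads in the square have nilpotent Euler classes: this holds for $\E_n$ and $\E_{n+1}$ by \cref{prop:EnoperadEuler}, and the class is closed under operadic (de)suspension by \cref{lem:closureSSeqEuler}, so $S\E_{n-1}$ and $S\E_n$ qualify as well. Consequently $\free$ is fully faithful on the full subcategory they span, and since full faithfulness passes to functor categories, the space of commutative squares of operads with these prescribed vertices is equivalent, via $\free$, to the space of commutative squares of the monads $\free_{\E_n}$, $\free_{\E_{n+1}}$, $\free_{S\E_{n-1}}$, $\free_{S\E_n}$. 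This reduces everything to the level of monads.

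Next I would invoke Lurie's theory of monads \cite{HA}: a commutative square of monads on $\Sp$ is the same datum as a commutative square of the associated $\infty$-categories of algebras in which all functors commute with the forgetful functors to $\Sp$. Here the vertices are $\Alg_{\E_n}^\nu(\Sp)$, $\Alg_{\E_{n+1}}^\nu(\Sp)$, and---via the suspension equivalences $\Alg_{\E_k}^\nu(\Sp)\simeq\Alg_{S\E_k}^\nu(\Sp)$ of the preliminaries, which intertwine the forgetful functors with a shift by $\Sigma$---the categories $\Alg_{S\E_{n-1}}^\nu(\Sp)$ and $\Alg_{S\E_n}^\nu(\Sp)$. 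As target I would take the diagram of \cref{cor:loopsEn} in the case $\C=\Sp$, transported along these equivalences. Compatibility of the restriction functors $\res$ with the forgetful functors is automatic, and $\Omega$ commutes with them since forgetful functors preserve limits. The one point requiring work is that the right adjoints $\omega$ to the Bar construction also commute with the forgetful functors; this I would extract from the triangle identities of \cref{cor:loopsEn}. For instance $\res_{\E_n}^{\E_{n+1}}\circ\,\omega\simeq\Omega$, together with $\fgt_{\E_n}\circ\res_{\E_n}^{\E_{n+1}}\simeq\fgt_{\E_{n+1}}$ and $\fgt\circ\Omega\simeq\Omega\circ\fgt$, forces $\fgt_{\E_{n+1}}\circ\omega\simeq\Omega\circ\fgt_{\E_n}$, which is precisely the compatibility making $\omega$ a functor over $\Sp$ once the shift is absorbed into the suspension equivalence. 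Feeding this square through the monadicity correspondence yields the square of monads, and full faithfulness of $\free$ then lifts it to the essentially unique operadic square, whose edges are the maps $\iota_n$, $S\iota_{n-1}$, $\beta_n$, $\beta_{n+1}$, and the diagonal $\sigma$.

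To conclude I would identify the edges and compare with \cref{thm:suspensionEn}. The horizontal edges induce the restriction functors $\res$, which are equally induced by the standard inclusions; by the rigidity built into full faithfulness these operad maps coincide with $\iota_n$ and $S\iota_{n-1}$, and likewise the diagonal, inducing $\Omega$, agrees with the suspension morphism $\sigma$ built in the preliminaries. That the resulting algebra diagram is equivalent to that of \cref{thm:suspensionEn} is then immediate upon passing to left adjoints, since \cref{cor:loopsEn} was itself obtained as the right-adjoint form of \cref{thm:suspensionEn}. Essential uniqueness follows formally: each of the two transitions used---the monadicity correspondence and the fully faithful functor $\free$---is an equivalence onto its image, so the space of operadic squares realizing the fixed algebra diagram is contractible.

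The step I expect to be most delicate is the verification that the Bar-adjoint functors $\omega$ commute \emph{coherently} with the forgetful functors to $\Sp$, so that the square of \cref{cor:loopsEn} genuinely lives over $\Sp$ and is thereby encoded by a square of monads. The compatibility on underlying objects is forced by the triangle identities, but upgrading it to the coherent datum demanded by the monadicity correspondence---and threading the suspension equivalences through consistently, so that the monad at the vertex $\Alg_{S\E_{n-1}}^\nu(\Sp)$ is genuinely $\free_{S\E_{n-1}}$ rather than a twist of it---is the technical heart of the argument.
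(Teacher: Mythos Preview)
Your proposal is correct and follows essentially the same strategy as the paper: both arguments reduce to the monad level via \cref{thm:SymffEuler}, after checking that all four corners have nilpotent Euler classes by \cref{prop:EnoperadEuler} and \cref{lem:closureSSeqEuler}. The one genuine difference is that the paper works on the \emph{left adjoint} side throughout, whereas you work on the right adjoint side.

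Concretely, the paper takes the diagram of \cref{thm:suspensionEn} for $\C=\Sp$ directly and augments it by the free functors $\free^{\E_k}\colon \Sp \to \Alg_{\E_k}^{\nu}(\Sp)$ at each vertex. The required compatibilities, such as $\Barc\circ\free^{\E_n}\simeq\free^{\E_{n-1}}\circ\Sigma$, fall out immediately from \cref{thm:suspensionEn} itself. One then reads off the square of monads as the endofunctors $\fgt\circ\free$, obtaining
\[
\begin{tikzcd}
\free_{\E_n} \ar{r}\ar{d}\ar{dr}{\sigma} & \free_{\E_{n+1}} \ar{d} \\
\Omega\free_{\E_{n-1}}\Sigma \ar{r} & \Omega\free_{\E_n}\Sigma,
\end{tikzcd}
\]
identifies the bottom row with $\free_{S\E_{n-1}}$ and $\free_{S\E_n}$, and invokes full faithfulness. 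Your route via \cref{cor:loopsEn} is the dual of this: you must verify that the right adjoints $\omega$ are compatible with the forgetful functors, which you correctly extract from the triangle $\res\circ\omega\simeq\Omega$. This works, but it is precisely the step you flag as delicate, and the paper's choice of left adjoints sidesteps the coherence concern entirely, since compatibility with free functors is already packaged into the commutative diagram of \cref{thm:suspensionEn}. In short: same proof, but the paper's bookkeeping is a little lighter because it never leaves the left adjoint square it already built.
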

\begin{proof}
Adding the free functors for the various algebra categories to the square of \cref{thm:suspensionEn} and identifying non-unital with augmented algebras, gives the following commutative diagram of left adjoint functors.
\[
\begin{tikzcd}
	\Sp \ar[dr,"\free^{\E_n}"] \ar[drr, bend left= 15,"\free^{\E_{n+1}}"] \ar[ddr, bend right=15, "\free^{\E_{n-1}} \circ \Sigma"'] & & \\
	& \Alg_{\E_n}^{\mathrm{nu}}(\Sp) \ar[r,"\free_{\E_n}^{\E_{n+1}}"] \ar{d}{\Barc} \ar[dr, "\Sigma_{\E_n}"] & \Alg_{\E_{n+1}}^{\mathrm{nu}}(\Sp) \ar{d}{\Barc} \\
	& \Alg_{\E_{n-1}}^{\mathrm{nu}}(\Sp) \ar[r,"\free_{\E_{n-1}}^{\E_n}"'] & \Alg_{\E_n}^{\mathrm{nu}}(\Sp)
\end{tikzcd}\]
We should verify that the two triangles added to the square indeed commute. This is clear for the upper one, whereas for the left one it follows from the natural equivalences
\[
\Barc\circ \free^{\E_n} \simeq \Barc\circ \free_{\E_{n-1}}^{\E_n}\circ \free^{\E_{n-1}} \simeq \Sigma_{\E_{n-1}} \circ \free^{\E_{n-1}} \simeq \free^{\E_{n-1}}\circ \Sigma.
\]
Here the second equivalence is a consequence of \cref{thm:suspensionEn} again and the others are clear.

The left adjoints in the diagram above allow us to view all the participating $\infty$-categories as monadic over $\Sp$ and the corresponding diagram of monads on $\Sp$ is of the form
\[\begin{tikzcd}
	\free_{\E_n} \ar{r}{\iota_n} \ar{d}\ar{r} \ar{dr}{\sigma} & \free_{\E_{n+1}} \ar{d} \\
	\Omega\free_{\E_{n-1}}\Sigma  \ar{r}{\iota_{n-1}} & \Omega\free_{\E_n} \Sigma.
\end{tikzcd}\]
By our earlier discussion on operadic suspension, we can identify the monads on the bottom row with $\free_{S\E_{n-1}}$ and $\free_{S\E_n}$ respectively. The conclusion of the theorem now follows from \cref{thm:SymffEuler} and the fact that all operads involved have nilpotent Euler classes, cf.\ \cref{prop:EnoperadEuler} and \cref{lem:closureSSeqEuler}.
\end{proof}

\begin{remark}
As indicated in \cref{Remark:goodwillie-calculus-differentiate-monads} one could also appeal to a monoidal structure on the Goodwillie derivative $\partial_* \colon \End(\Sp) \to \SSeq(\Sp)$ to obtain the square of operads of \cref{thm:factorizationsuspension} from the square of monads at the end of the proof we just gave. The argument we have given above circumvents the issue that this monoidal structure is not yet established and gives a sharper conclusion, by the fully faithfulness established in \cref{thm:SymffEuler}.
\end{remark}

\begin{remark}
\label{rmk:wrongway}
The `wrong-way morphism' $\beta_n\colon \E_n \to S\E_{n-1}$ arising from the bar construction can be described in many ways. First of all, maps like this were already considered by May \cite{may}. Ching--Salvatore \cite{chingsalvatore} also describe a map of operads $\E_n \to S\E_{n-1}$ arising from the Koszul self-duality of $\E_n$. To be precise, they construct a functor
\[
K\colon \Operad(\Sp) \to \Operad(\Sp)^{\mathrm{op}}
\]
forming the `Koszul dual' $K\mathcal{O}$ of an operad $\mathcal{O}$. It is given by the levelwise Spanier--Whitehead dual of the operadic bar construction. Then they prove that there is an equivalence of operads in $\Sp$ of the form
\[
K\E_n \cong S^{-n}\E_n. 
\]
Under this equivalence (and the analogous one for $n-1$) the evident inclusion $\iota_{n-1}\colon \E_{n-1} \to \E_n$ produces a map of operads $\E_n \to S\E_{n-1}$; Ching--Salvatore include an explicit description of this map in \cite{chingsalvatore}. This morphism should be equivalent to the $\beta_n$ we have constructed, although we will not pursue the matter here. It is interesting to note that Ching--Salvatore's map already exists \emph{unstably}, as a map of operads in pointed spaces. Since we have only dealt with the operadic suspension and the suspension morphism for $\E_n$ in the stable case, our methods cannot address this unstable version.

Moreover, as indicated in the introduction, \cite[Theorem C]{Fresse} also provides a square of operads as in \cref{thm:factorizationsuspension} after tensoring with an ordinary commutative ring $R$, based on explicit point-set models. Again, our square (after tensoring with $R$) should be equivalent to his, but we will not pursue the matter here.
\end{remark}

\subsection{The suspension morphism for operads in pointed spaces}
\label{subsec:unstableEn}

We have discussed the operadic suspension in the stable case, i.e.\ for operads in $\Sp$, but in fact it should already exist in the context of operads in the symmetric monoidal $\infty$-category of pointed spaces with smash product, or generally in pointed symmetric monoidal $\infty$-categories for which the tensor product is compatible with colimits in each variable. As indicated earlier, for the case of pointed spaces there are several versions of this in the literature. Arone--Kankaanrinta \cite{aronekankaanrinta} construct a \emph{sphere operad} $S^{\rho}$, of which the terms $S^{\rho}(n)$ are equivalent to the representation spheres $S^{\rho_n}$ and the composition maps
\[
S^{\rho}(n) \wedge S^{\rho}(k_1) \wedge \cdots \wedge S^{\rho}(k_n)  \to S^{\rho}(k_1 + \cdots + k_n)
\]
are homotopy equivalences (or even homeomorphisms). For a general non-unital operad in pointed spaces, one can then define its operadic suspension by taking the levelwise smash product with this sphere operad. An alternative construction of a sphere operad and the corresponding operadic suspension is given by Ching--Salvatore \cite{chingsalvatore}.

In order to compare these constructions to each other and to the operadic suspension we have discussed here it is desirable to characterize the operadic suspension by universal properties, rather than focus on any specific construction of it. In this brief section we point out what such properties might be and invite the reader to take this up. We speculate that our factorizations of the suspension morphism $\sigma\colon \Sigma^\infty_+ \E_n^{\mathrm{nu}} \to S\Sigma^\infty_+ \E_n^{\mathrm{nu}}$ of \cref{thm:factorizationsuspension} already exist for the suspension morphism $\sigma\colon (\E_n^{\mathrm{nu}})_+ \to S(\E_{n-1}^{\mathrm{nu}})_+$ of the (unstable) pointed $\E_n$-operad, which is essentially obtained by means of the Euler classes. If true, these factorizations should then provide alternative proofs of \cref{thm:suspensionEn} (on suspensions of $\E_n$-algebras) and \cref{cor:loopsEn} (on loops of $\E_n$-algebras).

We will now describe the desired properties of the operadic suspension. Consider a presentable pointed symmetric monoidal $\infty$-category $(\C,\wedge)$ and assume that the tensor product commutes with colimits in each variable separately. Let $\O$ be a non-unital operad in $\C$. Then there should be a suspended operad $S\O$ with terms 
\[ S\O(n) = \O(n) \wedge S^{{\rho}_n}\]
together with a canonical suspension morphism $\sigma \colon \O \to S \O$ induced from the Euler classes $S^0 \to S^{{\rho}_n}$. These data should satisfy the following property:
\begin{itemize}
\item[(F)] There is a canonical equivalence in $\End(\C)$ between $\Sigma \free_{S\O}$ and $\free_\O  \Sigma$. Its adjoint map $\free_{S\O} \to \Omega \free_\O \Sigma$ refines to a map of monads, making the composite
\[ \free_{\O} \xrightarrow{\free_{\sigma}} \free_{S\O} \lto \Omega \free_{\O}\Sigma \]
equivalent to the suspension morphism of the reduced monad $\free_\O$.
\end{itemize}
Let us briefly explain the relation between the factorization above and the suspension-loop adjunction on the $\infty$-category $\Alg_{\mathcal{O}}(\C)$ itself. Consider the following right adjoint functors:
\[
\begin{tikzcd}
\Alg_{\O}(\C) \xrightarrow{\,\Omega\,} \Alg_{\O}(\C) \xrightarrow{\fgt} \C.
\end{tikzcd}
\]
The monad on $\C$ associated with the composed adjunction is $\Omega \free_{\O}\Sigma$. The composite of right adjoints must therefore factor over a right adjoint functor $\gamma^*\colon \Alg_{\O}(\C) \to \Alg_{\Omega \free_{\O}\Sigma}(\C)$. Combining this with property (F) above allows us to construct a diagram of right adjoint functors
\[
\begin{tikzcd}
\Alg_{\O}(\C) \ar{r}{\gamma^*}\ar{d}{\fgt}\ar[bend left = 20]{rrr}{\Omega} & \Alg_{\Omega \free_{\O}\Sigma}(\C)\ar{d}[swap]{\fgt}\ar{r} & \Alg_{S\O}(\C) \ar{r}{\sigma^*} \ar{dl}[swap]{\fgt} & \Alg_{\O}(\C) \ar[bend left=10]{dll}{\fgt}	 \\
\C \ar{r}{\Omega} & \C. &&
\end{tikzcd}
\]
In particular, this shows that the loops functor $\Alg_{\O}(\C) \xrightarrow{\Omega} \Alg_{\O}(\C)$ factors over the restriction $\sigma^*\colon \Alg_{S\O}(\C) \to \Alg_{\O}(\C)$ along the suspension morphism. Dually, the suspension functor of $\Alg_{\O}(\C)$ factors over the pushforward functor $\sigma_!$. We note that in the case where $\C$ is stable, the first two arrows on the top row of the diagram are equivalences of $\infty$-categories and their composite coincides with the shift we have considered earlier. However, in the general case these need not be equivalences. For general $\C$, it would be desirable if $S\O$, together with its suspension morphism $\sigma \colon \O \to S\O$ were determined by property (F) described above, or to give some other additional characterizing properties. We invite the interested reader to take this up.

\section{Triviality of spheres}
\label{sec:trivspheres}

In this section we discuss some examples where our results can be applied and we investigate in what sense our results are optimal.
\subsection{Preliminaries}
For this section, let us fix an $\E_{n+1}$-ring spectrum $A$, so that $\Mod(A)$ is a presentably $\E_{n}$-monoidal $\infty$-category. We write $\Alg_{\E_{n}}(A)$ for $\Alg_{\E_{n}}(\Mod(A))$ and likewise for the augmented and non-unital versions. 
In what follows, an object of $\Alg_{\E_n}^\aug(A)$ is called $\E_n$-trivial over $A$ if it is equivalent to $\tr_{\E_n}$ of its underlying augmented $\E_0$-algebra. For a space $X$, we write $C^*(X;A)$ for the ``cochains of $X$ with coefficients in $A$'', i.e.\ for the $X$-indexed limit of the constant diagram on $A$. A choice of a basepoint on $X$ makes $C^*(X;A)$ into an object of $\Alg_{\E_n}^\aug(A)$. 
As a consequence of \cref{prop:nfoldsusploops}, we have the following result.
\begin{theorem}\label{thm:formality-of-spheres}
Let $X$ be a pointed space and $n \geq 0$. Then $C^*(\Sigma^n X;A)$ is $\E_n$-trivial over $A$. In particular, $C^*(S^n;A)$ is $\E_n$-trivial over $A$.
\end{theorem}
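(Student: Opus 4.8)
The plan is to reduce the statement to \cref{prop:nfoldsusploops} by identifying the cochain algebra of a suspension with an iterated loop object. Recall that $A$ is an $\E_{n+1}$-ring, so that $\Mod(A)$ is presentably stably $\E_n$-monoidal and \cref{prop:nfoldsusploops} applies. I would work with the contravariant cochain functor $C^*(-;A)\colon \Spc_*^{\op} \to \Alg_{\E_n}^\aug(A)$, which sends a pointed space $X$ to the power $A^X = \varprojlim_X A$, augmented via the basepoint $\ast \to X$. The key point is that, being a cotensor by spaces, this functor carries colimits of pointed spaces to limits of augmented $\E_n$-algebras. Granting this, the goal becomes to show
\[
C^*(\Sigma^n X;A) \simeq \Omega^n C^*(X;A)
\]
in $\Alg_{\E_n}^\aug(A)$ and then to invoke \cref{prop:nfoldsusploops}.

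First I would establish the single-suspension case. The reduced suspension is the categorical suspension in $\Spc_*$, i.e.\ the pushout $\Sigma X \simeq \ast \cup_X \ast$ of the two collapse maps $X \to \ast$. Since $C^*(-;A)$ turns this pushout into a pullback and $C^*(\ast;A) \simeq \one$, one obtains
\[
C^*(\Sigma X;A) \simeq \one \times_{C^*(X;A)} \one .
\]
It then remains to recognize the right-hand side as the categorical loop object in the pointed $\infty$-category $\Alg_{\E_n}^\aug(A)$: the zero object is $\one$, and the two structure maps $\one \to C^*(X;A)$ appearing in this pullback are the unit maps induced by $X \to \ast$, which coincide with the zero map out of the zero object. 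Hence the pullback computes $\Omega C^*(X;A)$, precisely the loop functor appearing in \cref{prop:nfoldsusploops}. Iterating $n$ times yields the desired equivalence $C^*(\Sigma^n X;A) \simeq \Omega^n C^*(X;A)$ (the case $n=0$ being vacuous).

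With this in hand, \cref{prop:nfoldsusploops} applied to the augmented $\E_n$-algebra $C^*(X;A)$ gives
\[
\Omega^n C^*(X;A) \simeq \tr_{\E_n}\bigl(\Omega^n\, \res^{\E_n}_{\E_0} C^*(X;A)\bigr).
\]
Since $\res^{\E_n}_{\E_0}$ preserves limits, the $\E_0$-algebra $\Omega^n \res^{\E_n}_{\E_0} C^*(X;A)$ is exactly the underlying augmented $\E_0$-algebra of $\Omega^n C^*(X;A) \simeq C^*(\Sigma^n X;A)$, so the latter is $\tr_{\E_n}$ of its own underlying $\E_0$-algebra, that is, $\E_n$-trivial over $A$. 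The final assertion then follows by taking $X = S^0$ and using $S^n \simeq \Sigma^n S^0$. I expect the main obstacle to lie in the first step rather than the second: one must check carefully that $C^*(-;A)$ genuinely lands in augmented $\E_n$-algebras, sends the suspension pushout to a pullback of such, and that this pullback agrees with the categorical loop object used in \cref{prop:nfoldsusploops}—in particular that the relevant structure maps are the zero maps of the pointed category. Each of these is a formal consequence of cotensoring by spaces together with $\one$ being a zero object, but the delicate bookkeeping is ensuring that the identification is one of \emph{augmented $\E_n$-algebras}, and not merely of underlying objects.
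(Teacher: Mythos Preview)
Your proposal is correct and follows essentially the same approach as the paper: identify $C^*(\Sigma^n X;A)$ with $\Omega^n C^*(X;A)$ in $\Alg_{\E_n}^\aug(A)$ using that the cochain functor sends pushouts in $\Spc_*$ to pullbacks, and then invoke \cref{prop:nfoldsusploops}. The paper's proof is the one-line version of what you wrote; your additional unpacking of the pullback as the categorical loop object and the verification that the result is $\tr_{\E_n}$ of its underlying $\E_0$-algebra are accurate elaborations of steps the paper leaves implicit.
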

\begin{proof}
The functor $C^*(-;A) \colon (\Spc_*)^\op \to \Alg_{\E_n}^\aug(A)$ preserves pullbacks and hence sends $\Sigma^n X$ to $\Omega^n C^*(X;A)$. The result then follows from \cref{prop:nfoldsusploops}.
\end{proof}

Let us assume for a moment that $A$ is a discrete commutative ring and let us also denote by $A$ its associated Eilenberg--Mac Lane spectrum. Then $H^*(\Sigma^n X;A)$ is also canonically an object of $\Alg_{\E_\infty}^\aug(A)$, via the composite of lax symmetric monoidal functors 
\begin{equation}\label{display:H-as-commutative-ring}
\Mod(A) \xrightarrow{\pi_*} \Mod(A)^\heartsuit_\gr \xrightarrow{\,H\,} \Mod(A).
\end{equation}
Here, $\Mod(A)^\heartsuit_\gr$ denotes the ordinary symmetric monoidal category of graded discrete $A$-modules, whereas $\Mod(A)$ is equivalent to $\D(A)$, the unbounded derived category of $A$, and the second functor in the above display is the graded Eilenberg--Mac Lane functor. 

\begin{definition}
Let $X$ be a space and $A$ a commutative ring. We say that $X$ is \emph{$\E_n$-formal over $A$} if there exists an equivalence $C^*(X;A) \simeq H^*(X;A)$ in $\Alg_{\E_n}^\aug(A)$. 
\end{definition}

\begin{remark}
The formal spaces from rational homotopy theory are, in the above notation, the $\E_\infty$-formal spaces over $\Q$. This uses the equivalence between commutative differential graded algebras over $\Q$ and $\Alg_{\E_\infty}(\D(\Q))$ \cite[Prop.\ 7.1.4.11]{HA}. Surprisingly, a space which is $\E_1$-formal over $\Q$ is already $\E_\infty$-formal over $\Q$ \cite{saleh, CPRNW} so that the notion of $\E_k$-formality over $\Q$ is independent of $k$ once $k\geq1$. This is very special to fields of characteristic $0$. Indeed, as we shall see below, the same statement fails for non-rational rings. 
\end{remark}

Our main result implies the following formality result.
\begin{theorem}\label{Thm:formality}
Let $X$ be a pointed space and $n\geq0$. If there exists an equivalence $\tilde C^*(X;A) \simeq \tilde H^*(X;A)$ in $\Mod(A)$, then $\Sigma^n X$ is $\E_n$-formal over $A$.
\end{theorem}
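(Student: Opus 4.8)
The plan is to prove that both $C^*(\Sigma^n X;A)$ and $H^*(\Sigma^n X;A)$ are $\E_n$-trivial over $A$ with the same underlying augmentation ideal. Since $\tr_{\E_n}$ depends only on the underlying augmented $\E_0$-algebra, that is, on the underlying $A$-module of the augmentation ideal, this produces an equivalence $C^*(\Sigma^n X;A)\simeq H^*(\Sigma^n X;A)$ in $\Alg_{\E_n}^\aug(A)$, which is exactly the asserted $\E_n$-formality of $\Sigma^n X$.

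On the cochain side, \cref{thm:formality-of-spheres} gives $C^*(\Sigma^n X;A)\simeq \tr_{\E_n}(\m(C^*(\Sigma^n X;A)))$, with augmentation ideal $\tilde C^*(\Sigma^n X;A)$. As in the proof of that theorem, $C^*(-;A)$ preserves pullbacks, so $C^*(\Sigma^n X;A)\simeq \Omega^n C^*(X;A)$, and since $\m=\fib(-\to A)$ commutes with $\Omega$ we obtain $\tilde C^*(\Sigma^n X;A)\simeq \Omega^n\tilde C^*(X;A)$ in $\Mod(A)$. The same $n$-fold cohomological degree shift computes $\tilde H^*(\Sigma^n X;A)\simeq \Omega^n\tilde H^*(X;A)$, using that the graded Eilenberg--Mac Lane functor turns a shift of grading into $\Omega^n=\Sigma^{-n}$. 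The hypothesis $\tilde C^*(X;A)\simeq \tilde H^*(X;A)$ in $\Mod(A)$ therefore yields $\tilde C^*(\Sigma^n X;A)\simeq \tilde H^*(\Sigma^n X;A)$, and hence $C^*(\Sigma^n X;A)\simeq \tr_{\E_n}(\tilde H^*(\Sigma^n X;A))$.

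It remains to identify $H^*(\Sigma^n X;A)$ with this same trivial algebra, which is the heart of the argument. For $n\geq 1$ the space $\Sigma^n X$ is a suspension, so its reduced diagonal is nullhomotopic and all cup products of positive-degree classes vanish; thus the graded cohomology ring $H^*(\Sigma^n X;A)=\pi_*C^*(\Sigma^n X;A)$ is, as an object of $\CAlg(\Mod(A)^\heartsuit_\gr)$, the \emph{square-zero} extension $A\oplus \tilde H^*(\Sigma^n X;A)$. The key claim is then that the functor $H$ carries square-zero extensions of discrete graded modules to trivial $\E_\infty$-algebras, so that $H^*(\Sigma^n X;A)\simeq \tr_{\E_\infty}(H(\tilde H^*(\Sigma^n X;A)))$; applying $\res^{\E_\infty}_{\E_n}$ and invoking \cref{rem:trivial-functors-compatible} rewrites the right-hand side as $\tr_{\E_n}(\tilde H^*(\Sigma^n X;A))$, which combined with the previous paragraph finishes the proof.

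Establishing that $H$ preserves triviality is the step I expect to require the most care, since its source $\Mod(A)^\heartsuit_\gr$ is not stable and \cref{lemma:trivial-and-base-change} does not apply directly. I would factor $H$ as the composite of the heart inclusion $\iota\colon \Mod(A)^\heartsuit_\gr \hookrightarrow \D(A)_\gr$ with a realization functor $\Phi\colon \D(A)_\gr \to \Mod(A)$ that collapses the grading into homotopy degrees, where $\D(A)_\gr$ is the stable, presentably symmetric monoidal derived $\infty$-category of graded $A$-modules. Because the standard t-structure on $\D(A)_\gr$ is compatible with its tensor product, an $\E_\infty$-algebra whose underlying object lies in the heart is the same datum as a discrete graded commutative ring; hence $\iota(A\oplus M)$ and $\tr_{\E_\infty}^{\D(A)_\gr}(\iota M)$, both having underlying object $A\oplus \iota M$ in the heart and both with trivial ideal multiplication, agree. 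Finally $\Phi$ is exact and lax symmetric monoidal between stable presentably symmetric monoidal categories, so by \cref{lemma:trivial-and-base-change} it sends $\tr_{\E_\infty}^{\D(A)_\gr}(\iota M)$ to $\tr_{\E_\infty}(\Phi\iota M)=\tr_{\E_\infty}(H(M))$, yielding the required identification. The case $n=0$ is degenerate and needs no square-zero input: $\tr_{\E_0}$ is the equivalence $\Mod(A)\simeq\Alg_{\E_0}^\aug(A)$, so $\E_0$-formality of $X$ is literally the hypothesis $\tilde C^*(X;A)\simeq \tilde H^*(X;A)$.
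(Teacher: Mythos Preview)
Your proof is correct and follows the same overall plan as the paper's: show that both $C^*(\Sigma^n X;A)$ and $H^*(\Sigma^n X;A)$ are $\E_n$-trivial with augmentation ideal $\tilde H^*(\Sigma^n X;A)\simeq\tilde C^*(\Sigma^n X;A)$. The treatment of the cochain side and of the identification of augmentation ideals is identical.

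The difference lies in how you establish that $H^*(\Sigma^n X;A)$ is $\E_n$-trivial. The paper does this in one line: the composite $H\circ\pi_*$ of \eqref{display:H-as-commutative-ring} commutes with loops, so $H^*(\Sigma^n X;A)\simeq \Omega^n H^*(X;A)$ in $\Alg_{\E_n}^\aug(A)$, and then \cref{prop:nfoldsusploops} applied directly to the augmented $\E_n$-algebra $H^*(X;A)$ gives triviality. In other words, the paper runs the same triviality machine a second time, now fed with $H^*(X;A)$ rather than $C^*(X;A)$. You instead argue from first principles: cup products vanish on suspensions, so $\pi_*C^*(\Sigma^n X;A)$ is a square-zero extension in $\Mod(A)^\heartsuit_\gr$, and you then verify that the Eilenberg--Mac Lane functor carries discrete square-zero extensions to $\tr_{\E_\infty}$ by factoring through $\D(A)_\gr$ and invoking \cref{lemma:trivial-and-base-change} for the realization functor. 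Your route is longer and requires the auxiliary observation that an $\E_\infty$-algebra in $\D(A)_\gr$ whose underlying object lies in the heart is determined by its $\pi_0$-ring structure (which is fine, since for connective $C$ and discrete $B$ one has $\Map(C,B)\simeq\Hom(\pi_0 C,B)$); the paper's route is shorter because it reuses the main theorem rather than unpacking what triviality means for $H$.
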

\begin{proof}
We have observed above that $C^*(\Sigma^nX;A) \simeq \tr_{\E_n}(\tilde C^*(X;A))$. Hence it suffices to observe that also $H^*(\Sigma^n X;A) \simeq \tr_{\E_n}(\tilde H^*(X;A))$. This follows from the fact that the composite \eqref{display:H-as-commutative-ring} commutes with the suspension and loop functors.
\end{proof}

\begin{example}\label{ex:suspensions-Q}
It is well-known in rational homotopy theory that suspensions are formal. One way to prove this is to show that any suspension is rationally equivalent to a wedge of spheres, and then to separately show that spheres are formal, and that wedges of formal spaces are formal, see e.g.\ \cite[Theorem 10.28]{Berglund} under additional finiteness assumptions. \cref{Thm:formality} gives the following non-computational proof of the formality of suspensions. Assume that $A$ is a commutative ring of global dimension $\leq 1$, e.g.\ a Dedekind domain or a field. It is well-known that for any object $M \in \Mod(A)$, there exists an equivalence $M \simeq \oplus_{n} \Sigma^n \pi_n(M)$, see e.g.\ \cite[proof of Cor.\ 3.8]{HLN} which applies verbatim to the case at hand. The right hand side is the value of the composite \eqref{display:H-as-commutative-ring} applied to $M$. In particular $\Sigma X$ is $\E_1$-formal over $A$, and hence as indicated above, $\E_\infty$-formal over $\Q$.
\end{example}

\begin{example}
Suppose that $X$ is of finite $\Z$-homological type and $A$ is a commutative ring. Then the canonical map $C^*(X;\Z)\otimes_\Z A \to C^*(X;A)$ is an equivalence. The same statement for cohomology in place if cochains is, of course, not correct in general. However, it is true if the cohomology of $X$ is in addition free as a graded abelian group. In this case, the same is true for $\Sigma^n X$ and reasoning as in \cref{ex:suspensions-Q} one deduces that $\Sigma^n X$ is $\E_n$-formal over any commutative ring $A$. Examples of such $X$ include spheres, projective spaces over $\mathbb{C}$ and $\mathbb{H}$, loop spaces of spheres, free loop spaces of odd dimensional spheres, as well as products of such spaces.
\end{example}

\begin{remark}
There are many examples of formal spaces whose cochain algebras are not trivial algebras, for instance compact K\"ahler manifolds different from $S^2$. Our methods cannot be applied in this situation. 
\end{remark}

We single out the most basic cases of suspensions in the following corollary.
\begin{corollary}
For $n\geq0$, the $n$-sphere $S^n$ is $\E_n$-formal over any commutative ring.
\end{corollary}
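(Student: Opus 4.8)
The plan is to realize $S^n$ as the $n$-fold suspension $\Sigma^n S^0$ and then invoke \cref{Thm:formality} with $X = S^0$. That theorem reduces the $\E_n$-formality of $\Sigma^n X$ over $A$ to the purely $A$-linear statement that the reduced cochains $\tilde C^*(X;A)$ and the reduced cohomology $\tilde H^*(X;A)$ are equivalent in $\Mod(A)$. So the entire content of the corollary is the verification of this hypothesis for the single space $X = S^0$, for an arbitrary discrete commutative ring $A$ (viewed via its Eilenberg--Mac Lane spectrum).

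First I would identify the two reduced invariants for $X = S^0$. Since $S^0$ consists of the basepoint together with one further point, the $S^0$-indexed limit $C^*(S^0;A)$ is the product $A \times A$, and the augmentation to $A$ is the projection onto the basepoint factor; hence the reduced cochains $\tilde C^*(S^0;A) = \fib(C^*(S^0;A) \to A)$ are given by $A$ concentrated in cohomological degree $0$. Likewise $\tilde H^*(S^0;A) \cong A$ in degree $0$. Both are therefore equivalent to $A$ as objects of $\Mod(A)$, and because they are concentrated in a single degree the required equivalence $\tilde C^*(S^0;A) \simeq \tilde H^*(S^0;A)$ is immediate, with no room for a nontrivial comparison datum. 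Applying \cref{Thm:formality} then yields the $\E_n$-formality of $S^n = \Sigma^n S^0$ over $A$.

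I expect no real obstacle here: the only step that could in principle be difficult, namely producing a $\Mod(A)$-level equivalence between cochains and cohomology, is vacuous for $S^0$ precisely because its reduced invariants live in a single degree. All of the nontrivial work is already packaged in \cref{Thm:formality} (and ultimately in \cref{prop:nfoldsusploops}), which upgrades an equivalence of underlying $A$-modules to an equivalence of augmented $\E_n$-algebras after $n$-fold suspension. For the degenerate case $n=0$ one simply observes that the statement reduces to the tautology $C^*(S^0;A) \simeq H^*(S^0;A)$ as augmented $\E_0$-algebras.
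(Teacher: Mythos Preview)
Your proof is correct and is precisely the argument the paper intends: the corollary is stated without proof as the ``most basic case'' of \cref{Thm:formality}, and your application of that theorem with $X = S^0$ (together with the trivial identification $\tilde C^*(S^0;A) \simeq A \simeq \tilde H^*(S^0;A)$ in $\Mod(A)$) is exactly the intended deduction.
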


Coming back to interpreting formality of spheres in terms of triviality, one may wonder whether our results are suboptimal in the sense that $n$-fold loop objects are even trivial as $\E_{n+1}$-algebras. The above case of cochains on suspensions gives a good test case for such questions. 
\begin{observation}\label{rem:detecting-non-triviality}
A consequence of \cref{lemma:trivial-and-base-change} in the present context is the following. Suppose $A$ and $B$ are $\E_{k+1}$-algebras in $\Sp$. If $C^*(S^n;A)$ is $\E_k$-trivial over $A$, then $C^*(S^n;A \otimes B)$ is $\E_k$-trivial over $A \otimes B$ and over $B$. Indeed, this follows by applying \cref{lemma:trivial-and-base-change} to the extension of scalars functor $- \otimes B\colon \Alg_{\E_n}^\aug(\Mod(A)) \to \Alg_{\E_n}^\aug(\Mod(A\otimes B))$ as well as to the restriction of scalars functor $\Alg_{\E_n}^\aug(\Mod(A\otimes B)) \to \Alg_{\E_n}^\aug(\Mod(B))$.
\end{observation}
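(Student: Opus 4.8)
The plan is to deduce the whole statement from \cref{lemma:trivial-and-base-change}, applied to two exact, lax $\E_k$-monoidal functors: extension of scalars $\Mod(A) \to \Mod(A \otimes B)$ for the assertion ``over $A \otimes B$'', and restriction of scalars $\Mod(A \otimes B) \to \Mod(B)$ for the assertion ``over $B$''. The only input beyond that lemma is the identification of the image of $C^*(S^n;A)$ under extension of scalars with $C^*(S^n; A \otimes B)$, which I expect to be the one step requiring care.

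First I would fix the module categories. Since $A$ and $B$ are $\E_{k+1}$-algebras, so is $A \otimes B$, and the unit maps $A \to A \otimes B \leftarrow B$ are maps of $\E_{k+1}$-rings; hence $\Mod(A)$, $\Mod(A\otimes B)$ and $\Mod(B)$ are presentably $\E_k$-monoidal. Extension of scalars $F := (-)\otimes_A (A\otimes B) \simeq (-)\otimes B \colon \Mod(A) \to \Mod(A\otimes B)$ is $\E_k$-monoidal and, being a left adjoint, preserves finite colimits; as a functor between stable $\infty$-categories it is therefore exact. Thus \cref{lemma:trivial-and-base-change} applies and the induced functor on augmented (equivalently, via stability, non-unital) $\E_k$-algebras preserves trivial algebras.

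Next I would show $F(C^*(S^n;A)) \simeq C^*(S^n; A \otimes B)$. The point is that $C^*(S^n;-)$ is computed by a \emph{finite} limit: writing $S^n = \Sigma^n S^0$ and using that the cochain functor sends suspensions to loops (as in the proof of \cref{thm:formality-of-spheres}), one has $C^*(S^n;A) \simeq \Omega^n C^*(S^0;A)$ with $C^*(S^0;A) \simeq A \times A$, so $C^*(S^n;A)$ is obtained from the unit $A$ by finitely many finite limits in $\Alg_{\E_k}^\aug(\Mod(A))$. As in the proof of \cref{lemma:trivial-and-base-change}, the functor $F$ preserves finite limits of augmented algebras (the forgetful functor to $\Mod$ is conservative and limit-preserving, and $F$ is exact) and sends $A$ to $A \otimes B$. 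Hence $F(C^*(S^n;A)) \simeq C^*(S^n; A \otimes B)$, and since $C^*(S^n;A)$ is $\E_k$-trivial over $A$ by hypothesis while $F$ preserves trivial algebras, the algebra $C^*(S^n; A \otimes B)$ is $\E_k$-trivial over $A \otimes B$.

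For the second assertion I would invoke \cref{lemma:trivial-and-base-change} once more, now for restriction of scalars $\res \colon \Mod(A \otimes B) \to \Mod(B)$ along $B \to A \otimes B$. This functor is lax $\E_k$-monoidal (it is the right adjoint of the $\E_k$-monoidal functor $F$) and exact (it preserves all limits and colimits), so it preserves trivial algebras; applying it to the $\E_k$-trivial algebra $C^*(S^n; A\otimes B)$ just produced shows that its restriction along $B \to A \otimes B$ is $\E_k$-trivial over $B$, as claimed. I expect the genuine content to be concentrated in the identification $F(C^*(S^n;A)) \simeq C^*(S^n; A\otimes B)$, i.e.\ the commutation of extension of scalars with the cochain functor: this uses the finiteness of $S^n$ in an essential way, since $F$ need not commute with infinite limits, whereas both applications of \cref{lemma:trivial-and-base-change} and the ``over $B$'' step are formal.
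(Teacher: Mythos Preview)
Your proof is correct and follows exactly the approach sketched in the paper: apply \cref{lemma:trivial-and-base-change} first to extension of scalars $(-)\otimes B$ and then to restriction of scalars along $B \to A\otimes B$. You supply more detail than the paper does---in particular the identification $F(C^*(S^n;A)) \simeq C^*(S^n;A\otimes B)$ via finiteness of $S^n$, which the paper leaves implicit---but the strategy is the same.
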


The following result shows that, in general, the conclusion of \cref{thm:formality-of-spheres} is not sharp. 
\begin{lemma}\label{lemma:rational-case}
Let $A$ be a rational $\E_{m+1}$-ring spectrum where $m\geq n+1 \geq 2$. Then $C^*(S^n;A)$ is trivial as an $\E_{m}$-$A$-algebra. 
\end{lemma}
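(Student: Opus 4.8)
The plan is to reduce the statement to the case $A = H\Q$ by base change, where it becomes the classical rational formality of spheres. First I would record that a rational $\E_{m+1}$-ring spectrum $A$ is canonically an $\E_{m+1}$-algebra over $H\Q$: since rationalization is a smashing localization and $H\Q \simeq \bS_\Q$ is an idempotent $\E_\infty$-ring, there is an essentially unique $\E_{m+1}$-ring map $H\Q \to A$. The associated base change functor $- \otimes_{H\Q} A \colon \Mod(H\Q) \to \Mod(A)$ is strong symmetric monoidal and, being a left adjoint between module categories over a field, is exact (it preserves all finite limits and colimits). In particular it is exact and lax $\E_m$-monoidal, so \cref{lemma:trivial-and-base-change} guarantees that it carries $\E_m$-trivial algebras to $\E_m$-trivial algebras.

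Next I would use finiteness of $S^n$ to transport the computation. Since $\Sigma^\infty_+ S^n$ is dualizable, there is an equivalence $C^*(S^n;R) \simeq (\Sigma^\infty_+S^n)^\vee \otimes R$ of augmented $\E_m$-$R$-algebras, natural in the $\E_{m+1}$-ring $R$, where $(\Sigma^\infty_+S^n)^\vee = C^*(S^n;\bS)$ carries its cochain-level $\E_\infty$-structure. Consequently the canonical comparison map
\[
C^*(S^n;H\Q)\otimes_{H\Q} A \lto C^*(S^n;A)
\]
is an equivalence: both sides identify with $(\Sigma^\infty_+S^n)^\vee \otimes A$, and since the forgetful functor to $\Mod(A)$ is conservative this is an equivalence of $\E_m$-algebras. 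Combined with the previous paragraph, it therefore suffices to prove that $C^*(S^n;H\Q)$ is $\E_m$-trivial over $H\Q$.

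The only substantive step is thus the rational case. Here $\tilde C^*(S^n;\Q)$ is concentrated in cohomological degree $n$, and since $n \geq 1$ one has $H^{2n}(S^n;\Q) = 0$; hence the graded-commutative cohomology ring $H^*(S^n;\Q)$, viewed as an $\E_\infty$-algebra via \eqref{display:H-as-commutative-ring}, has vanishing products among elements of positive degree and is precisely the trivial square-zero extension $\tr_{\E_\infty}(\Sigma^{-n}H\Q)$. By the classical rational formality of spheres, $C^*(S^n;\Q) \simeq H^*(S^n;\Q)$ as $\E_\infty$-algebras, so $C^*(S^n;H\Q)$ is $\E_\infty$-trivial, and therefore $\E_m$-trivial by the compatibility of the trivial functors with restriction recorded in \cref{rem:trivial-functors-compatible}. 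The main obstacle is packaged entirely into this input, namely the identification of the formal rational cochain algebra with the trivial square-zero $\E_\infty$-algebra; this can either be cited, or established directly by producing a cellular $\E_\infty$-model for $C^*(S^n;\Q)$ and using that over a field every module splits as the sum of shifts of its homotopy groups.
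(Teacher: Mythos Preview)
Your proof is correct and follows essentially the same route as the paper: reduce to the case $A = H\Q$ via base change along the $\E_{m+1}$-map $H\Q \to A$, use \cref{lemma:trivial-and-base-change} to transport triviality, and establish the rational case by combining $\E_\infty$-formality of $S^n$ with the observation that $H^*(S^n;\Q)$ is a trivial square-zero extension. The paper packages the base change step via \cref{rem:detecting-non-triviality} and obtains the rational input from \cref{ex:suspensions-Q} (which in turn invokes the Saleh/CPRNW result that rational $\E_1$-formality implies $\E_\infty$-formality), whereas you argue the identification $C^*(S^n;H\Q)\otimes_{H\Q} A \simeq C^*(S^n;A)$ directly via dualizability and cite rational formality of spheres; these are cosmetic differences.
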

\begin{proof}
It follows from \cref{ex:suspensions-Q} that $C^*(S^n;\Q)$ is in particular $\E_{m}$-trivial over $\Q$. Since the unit of $A$ factors via an $\E_{m+1}$-map $\Q \to A$, the lemma follows from \cref{rem:detecting-non-triviality}.
\end{proof}

We believe that the case of rational ring spectra is, however, the only case in which our result is not sharp. Recall the following conjecture already stated in the introduction:
\begin{conjecture}\label{conj}
Let $n\geq 1$ and let $A$ be an $\E_{n+2}$-algebra in $\Sp$. Then $C^{*}(S^{n};A)$ is $\E_{n+1}$-trivial over $A$ if and only if $A$ is rational.
\end{conjecture}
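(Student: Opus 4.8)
The \emph{if} direction is precisely \cref{lemma:rational-case}, so all the content lies in the \emph{only if} implication: if $A$ is not rational, then $C^*(S^n;A)$ fails to be $\E_{n+1}$-trivial over $A$. The plan is to produce a single operadic structure map, equivalently a single power operation, that is nonzero on $C^*(S^n;A)$ but must vanish on any $\E_{n+1}$-trivial algebra. By \cref{thm:formality-of-spheres} the augmentation ideal is $\m = \tilde C^*(S^n;A) \simeq \Sigma^{-n}A$, equipped with a non-unital $\E_{n+1}$-structure that is $\E_n$-trivial but a priori not $\E_{n+1}$-trivial. Since a trivial algebra has all of its $k$-ary operations ($k \geq 2$) null, it suffices to exhibit one nonzero operation; the conceptual first candidate is the binary operation $\mu_2 \colon D_2^{\E_{n+1}}(\m) \to \m$, and \cref{prop:DL-operation} guarantees that the associated power operation vanishes on trivial algebras.

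First I would compute $\mu_2$ using the representation-sphere description underlying \cref{prop:EnoperadEuler}. The relevant operad term is $\Sigma^\infty_+\E_{n+1}^\nu(2) \simeq \Sigma^\infty_+ S^n$ with the antipodal $\Sigma_2 = C_2$-action, i.e.\ the unit sphere of $(n+1)\sigma$ for $\sigma$ the sign representation, sitting in the Euler cofiber sequence $\Sigma^\infty_+ S((n+1)\sigma) \to \bS \xrightarrow{e_\sigma^{\,n+1}} \bS^{(n+1)\sigma}$. Tensoring with $A$, forming homotopy $C_2$-orbits, and dualizing against $\m \simeq \Sigma^{-n}A$ expresses $\mu_2$ as a class in an appropriate degree of $\pi_*(A^{tC_2})$, arising as the image of the $(n+1)$-fold Euler class $e_\sigma^{\,n+1}$ under the Tate-valued structure --- the same mechanism that produced the nontrivial $\eta$-class in the discussion of the non-full-faithfulness of $\free$ above. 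Over a rational ring the Tate construction $(-)^{tC_2}$ vanishes, so $\mu_2 = 0$ and more generally $C^*(S^n;A)$ is $\E_{n+1}$-trivial, re-deriving \cref{lemma:rational-case}; the task is thus to prove that this Tate class survives whenever $A$ is non-rational.

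The listed cases then proceed by detecting this class after a suitable base change, using \cref{rem:detecting-non-triviality} and \cref{lemma:trivial-and-base-change}. When $A \otimes \F_p \neq 0$ I would reduce modulo $p$ and identify the surviving operation with a nonzero Dyer--Lashof operation on $H^*(S^n;\F_p)$ --- the bottom operation at the edge of what the $\E_{n+1}$-structure provides, landing in the top degree, which is nonzero precisely because cochains remember the attaching data of $S^n$ whereas a trivial algebra does not. When $A \otimes \KU/p \neq 0$ one runs the analogous argument with the $K(1)$-local power operation in place of the Dyer--Lashof operation. The bounded-below case and the $\E_\infty$ case reduce to these via \cref{lemma:implication}, and the range $n \leq 2$ is handled directly from the explicit operadic square of \cref{thm:factorizationsuspension} together with \cref{prop:EnoperadEuler}: the jump in the order of nilpotence of the Euler classes from $n$ (for $\E_n$) to $n+1$ (for $\E_{n+1}$) is exactly the algebraic shadow of the obstruction, and for small $n$ one can verify by hand that the class $e_\sigma^{\,n+1}$ does not die after tensoring with a non-rational $A$.

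The main obstacle is the conjecture in full generality. A non-rational $\E_{n+2}$-ring $A$ need not be detected by any of the above: there exist ring spectra of higher chromatic height that are simultaneously $\F_p$- and $\KU/p$-acyclic for every prime $p$ yet are not rational, and for these the primary $C_2$-Tate obstruction must be shown nonzero from non-rationality alone, with no mod-$p$ or $K$-theoretic foothold. Controlling $A^{tC_2}$ --- and, should the primary obstruction happen to vanish, the higher obstructions governed by $A^{t\Sigma_k}$ --- for an arbitrary non-rational ring spectrum is the hard analytic heart of the problem; it is closely tied to the nonvanishing of the Tate-valued Frobenius and to blueshift phenomena, and it is the reason the statement is offered as a conjecture rather than a theorem. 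I expect any complete proof to hinge on a structural statement to the effect that a ring spectrum all of whose iterated power operations are trivial must already be a $\Q$-algebra.
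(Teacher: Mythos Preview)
You have correctly identified that the statement is a conjecture and that only partial results are established, and your overall narrative (detect nontriviality via a nonvanishing power operation, reduce along base change using \cref{rem:detecting-non-triviality}) is the right shape. However, your concrete proposal centers on the \emph{binary} structure map $\mu_2\colon D_2^{\E_{n+1}}(\m)\to\m$ and the $C_2$-Tate class $e_\sigma^{\,n+1}$, and this is where it diverges from the paper and where there is a real gap. If $p$ is odd and $A\otimes\F_p\neq 0$, then $2$ is invertible in $A\otimes\F_p$ and hence $(-)^{tC_2}$ vanishes there; your primary obstruction dies, yet $A$ is not rational. The Dyer--Lashof operation $Q^0$ that \cref{prop:DL-operation} invokes is a $p$-ary operation, arising from $D_p^{\E_{n+1}}$ rather than $D_2^{\E_{n+1}}$, so it is not ``the associated power operation'' of your $\mu_2$ when $p$ is odd. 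The same issue affects the $\KU/p$ case.

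The paper's argument for the partial results is organized differently and avoids this pitfall by working one prime at a time with the $p$-ary map $\mu_p^{n+1}\colon \Sigma^n D_p^{\E_{n+1}}(\bS^{-n})\to\bS^0$. The key step you are missing is \cref{lemma:operad-maps}: using the factorization $\mu_p^{n+1}\simeq\beta^n$ through the wrong-way maps of \cref{thm:factorizationsuspension}, one identifies the cofiber $C(\mu_p^{n+1})$ with a $p$-local splitting $\bS^0\oplus C(\beta^{n-1})$ under which the canonical map $\bS^0\to C(\mu_p^{n+1})$ becomes $(p,\theta_n)$. Nullity of $\mu_p^{n+1}$ over $A$ then yields the arithmetic relation $1=px+f\theta_n$ in $\pi_0(A)_{(p)}$. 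The cases $n\leq 2$ follow not from ``verifying by hand that $e_\sigma^{\,n+1}$ does not die'' but from showing $f\theta_n$ is nilpotent: for $n=1$ the domain of $f$ vanishes, and for $n=2$ one invokes Nishida's nilpotence theorem on $\theta_2\in\pi_{2p-3}(\bS)$. The $\E_\infty$ case uses the May nilpotence theorem of Mathew--Naumann--Noel to force $f\theta_n$ nilpotent. Only \emph{after} this operadic analysis does the paper turn to genuine power operations ($Q^0$ over $\F_p$, and $\theta^i_r$ over $p$-complete $\KU$) to handle the remaining cases; these are again $p$-ary, not binary. Your Tate-theoretic framing is a reasonable heuristic for why the conjecture should be hard in general, but it does not substitute for the cofiber-and-nilpotence argument that actually proves the listed cases.
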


The rest of this section is devoted to an inspection of this conjecture. We offer two approaches, one based on a study of the operadic structure maps and one based on power operations. These two approaches yield the following two results, respectively.\footnote{In fact, we use the operadic approach to conclude only the version of statement \ref{item:thm2} where $A$ is connective.}
\begin{theorem}\label{thm:conj}
The \hyperlink{conj}{Triviality Conjecture} holds 
\begin{enumerate}
\item\label{item:thm1} when $n\leq 2$,
\item\label{item:thm2} when $A$ is bounded below,
\item\label{item:thm3} when $A$ is the underlying $\E_{n+2}$-algebra of an $\E_\infty$-algebra. 
\end{enumerate}
\end{theorem}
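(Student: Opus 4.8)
The \emph{if} direction is \cref{lemma:rational-case}, so the entire content lies in the \emph{only if} direction, which I would prove in contrapositive form: if $A$ is not rational, then $C^*(S^n;A)$ is not $\E_{n+1}$-trivial over $A$. Following the operadic (rather than power-operation) approach announced before the theorem, the common strategy for all three cases is to isolate a single operadic structure map of $C^*(S^n;A)$ which must vanish identically on any $\E_{n+1}$-trivial algebra, and to show it is nonzero exactly when $A$ is non-rational. What changes between the cases is only the input used to verify this non-vanishing.

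To set up the obstruction: by \cref{thm:formality-of-spheres} (via \cref{prop:nfoldsusploops}) the augmentation ideal of $C^*(S^n;A)$ is $\Sigma^{-n}A$ and its underlying $\E_n$-algebra is trivial, so the residual datum is an $\E_{n+1}$-refinement of this trivial $\E_n$-structure. On a trivial $\E_{n+1}$-algebra every structure map of arity $\geq 2$ is null, so it suffices to exhibit one nonzero power operation. Using the operadic reformulation of \cref{sec:stableEn}---in particular the factorization \cref{thm:factorizationsuspension} and the description of the loop functor as restriction along the suspension morphism $\sigma$---the relevant arity-$p$ operation is computed from the structure maps of the operadic suspension. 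Fully faithfulness of $\free$ on operads with nilpotent Euler classes (\cref{thm:SymffEuler}, applicable by \cref{prop:EnoperadEuler} and \cref{lem:closureSSeqEuler}) guarantees that passing to the associated monads loses no information, so this operation is faithfully recorded by an explicit stable class. Unwinding the $\Sigma_p$-actions---on $\E_{n+1}^\nu(p)$ together with the Koszul twist on $(\Sigma^{-n}A)^{\otimes p}$---and running the Euler-class cofibre sequence for the sign representation (which produces the stunted projective spectra $\bS^{k\sigma}_{h\Sigma_2}\simeq \Sigma^\infty\RP^\infty_k$) identifies the operation with a single class $\theta_n(A)$ in the homotopy of a Tate construction such as $A^{tC_p}$, suitably shifted. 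Triviality of $C^*(S^n;A)$ thus forces $\theta_n(A)=0$ for all $p$.

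It remains to prove $\theta_n(A)\neq 0$ for non-rational $A$, and this is where the three hypotheses enter. For \cref{item:thm1} ($n\leq 2$) the class $\theta_n(A)$ sits in such a low range that the Tate spectral sequence computing $\pi_\ast A^{tC_p}$ is controlled by $\pi_0 A$ and $\pi_1 A$; here one identifies $\theta_n(A)$ directly and checks, for arbitrary (not necessarily connective) $A$, that it vanishes only in the rational case. For the connective part of \cref{item:thm2}, a non-rational connective $A$ has $A\otimes\F_p\neq 0$ for some $p$; by \cref{rem:detecting-non-triviality} and \cref{lemma:trivial-and-base-change} it suffices to prove non-triviality after base change to $A\otimes\F_p$ and then, along the ($\E_\infty$) Postnikov truncation $A\otimes\F_p \to H\pi_0(A\otimes\F_p)$ (legitimate since $C^*(S^n;-)$ commutes with base change, $S^n$ being finite), over an Eilenberg--Mac Lane $\F_p$-algebra, where $\theta_n$ becomes a classical Dyer--Lashof/Steenrod operation on cochains of $S^n$ whose non-vanishing is the statement recalled in the introduction. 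For \cref{item:thm3}, the $\E_\infty$-structure provides the full extended-power and norm structure, so that $\theta_n(A)$ is the image of a universal nonzero class under the unit $\bS\to A$; one then base-changes along this $\E_\infty$-unit to reduce to $\F_p$- (or $\KU/p$-) coefficients, where $\theta_n$ is manifestly nonzero, using that a non-rational $\E_\infty$-ring has $A\otimes\F_p\neq 0$ or $A\otimes\KU/p\neq 0$ for some $p$ and the Segal-conjecture identification $\bS^{tC_p}\simeq \bS^{\wedge}_p$.

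The main obstacle is precisely the equivalence ``$\theta_n(A)\neq 0$ if and only if $A$ is non-rational''. Non-rationality is an a priori weak hypothesis from which one cannot in general extract a single nonzero operation, which is exactly why the three restrictions are imposed: low-dimensionality makes the target a computable stable range, connectivity permits truncation to an Eilenberg--Mac Lane spectrum where the operation is classical, and the $\E_\infty$-hypothesis supplies enough multiplicative structure to reduce to a universal class. A uniform argument covering all non-rational $A$ cannot be obtained from this operadic obstruction alone; it requires the power-operation techniques, which also handle the general bounded-below case of \cref{item:thm2} by a further reduction to $\F_p$- and $\KU/p$-coefficients.
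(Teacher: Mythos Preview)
Your proposal differs substantially from the paper's argument, and several steps are not justified as written.

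The paper does not produce a single obstruction class $\theta_n(A)$ in a Tate construction. Instead, \cref{lemma:operad-maps} analyses the cofibre of the universal structure map $\mu_p^{n+1}\colon \Sigma^n D_p^{\E_{n+1}}(\bS^{-n})\to\bS^0$ and shows that after $p$-localization there is a splitting $C(\mu_p^{n+1})\simeq \bS^0\oplus\Sigma^{n+1}D_p^{\E_{n-1}}(\bS^{-n})$ under which the boundary map $\gamma$ becomes $(p,\theta_n)$. Assuming $C^*(S^n;A)$ is $\E_{n+1}$-trivial, the composite $\mu_p^{n+1}$ followed by the unit of $A$ is null, and the splitting produces a \emph{relation} $1=px+f\theta_n$ in $\pi_0(A)_{(p)}$. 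The entire argument then reduces to showing $f\theta_n$ is nilpotent, so that $p$ is invertible in $\pi_0(A)$.

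Your treatment of \ref{item:thm1} is the most serious gap. The claim that for $n\le 2$ the obstruction is ``controlled by $\pi_0 A$ and $\pi_1 A$'' via a Tate spectral sequence is not correct for arbitrary $A$: the $E_2$-page involves all of $\pi_*A$, and there is no a priori collapse. The paper's argument for $n=2$ identifies $\theta_2\in\pi_{2p-3}(\bS)$ and invokes \emph{Nishida's nilpotence theorem} to conclude that $f\theta_2\in\pi_0(A)_{(p)}$ is nilpotent; this is the essential input you are missing. (For $n=1$ the term $\Sigma^{n+1}D_p^{\E_{n-1}}(\bS^{-n})$ vanishes and the argument is elementary.)

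For \ref{item:thm3} the paper's operadic argument again uses the relation $1=px+f\theta_n$ and appeals to the solution of \emph{May's nilpotence conjecture} by Mathew--Naumann--Noel to deduce that $f\theta_n$ is nilpotent, by checking its image in $H_0(A;\F_p)$ and $H_0(A;\Q)$. Your alternative route---asserting that a non-rational $\E_\infty$-ring has $A\otimes\F_p\ne 0$ or $A\otimes\KU/p\ne 0$---is exactly the content of \cref{lemma:implication}, which itself requires Hahn's theorem and the Hopkins--Smith nilpotence theorem; you cannot treat this as an elementary observation. Either way, a deep nilpotence-type input is needed and should be named.

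Finally, your arguments for \ref{item:thm2} and \ref{item:thm3} via base change to $\F_p$ or $\KU/p$ are not the operadic approach at all: they are the power-operation method of \cref{subsec:powerops}, which the paper separates out as \cref{thm:conj2}. That is a legitimate route (and indeed is how the paper upgrades the connective case of \ref{item:thm2} to the bounded-below case via \cref{lemma:implication}), but it is not what you announced, and you should either commit to the power-operation argument and cite its inputs, or supply the actual operadic proof with its nilpotence ingredients.
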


\begin{theorem}\label{thm:conj2}
Suppose $A$ is an $\E_{n+2}$-algebra in $\Sp$ for which there exists a prime $p$ such that $A \otimes \F_p$ or $A \otimes \KU/p$ is non-zero. Then $C^*(S^n;A)$ is not $\E_{n+1}$-trivial over $A$. In particular, the \hyperlink{conj}{Triviality Conjecture} holds for such $A$.
\end{theorem}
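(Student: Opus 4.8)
The plan is to prove the contrapositive of the hard direction of the conjecture: assuming the stated hypotheses, I will produce a power operation on $C^*(S^n;A)$ that is forced to vanish on every $\E_{n+1}$-trivial algebra but is nonzero here. The first step is a reduction by base change. Suppose, for contradiction, that $C^*(S^n;A)$ were $\E_{n+1}$-trivial over $A$. By \cref{rem:detecting-non-triviality}, $C^*(S^n;A\otimes B)$ would then be $\E_{n+1}$-trivial over $A\otimes B$ for every $\E_{n+2}$-algebra $B$. Taking $B=\F_p$ in the first case and $B=\KU$ in the second (note that $A\otimes\KU/p\neq 0$ is equivalent to $(A\otimes\KU)/p\neq 0$), it suffices to derive a contradiction after replacing $A$ by $A\otimes\F_p$, a nonzero $\E_{n+2}$-algebra over $\F_p$, or by $A\otimes\KU$, an $\E_{n+2}$-algebra over $\KU$ with $A\otimes\KU$ not uniquely $p$-divisible. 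In both situations \cref{thm:formality-of-spheres} identifies the augmentation ideal of $C^*(S^n;A)$ with $\Sigma^{-n}A$ and shows it is already $\E_n$-trivial, so any obstruction to $\E_{n+1}$-triviality must be detected by structure living strictly between $\E_n$ and $\E_{n+1}$.

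The operation I would use is a Tate-valued Frobenius. For a non-unital $\E_{n+1}$-algebra with underlying object $\m$, the operad structure supplies maps $\mu_j\colon (\E_{n+1}(j)\otimes\m^{\otimes j})_{h\Sigma_j}\to\m$ which are null for all $j\geq 2$ precisely when the algebra is trivial. Restricting $\mu_p$ to $C_p\subseteq\Sigma_p$ and precomposing with the Tate diagonal $\m\to(\m^{\otimes p})^{tC_p}$ yields a natural map
\[ \varphi\colon \m \lto (\m^{\otimes p})^{tC_p} \lto \m^{tC_p}, \]
where the second arrow is induced by $\mu_p$; the $C_p$-equivariant $p$-fold multiplication needed here is available because $n+1\geq 2$. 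Since $\varphi$ factors through $\mu_p$, it vanishes identically on trivial algebras; this is the spectrum-level incarnation of the classical fact that Steenrod operations vanish on trivial $\E_\infty$-algebras, cf.\ the discussion of $\mathrm{Sq}^0$ in the introduction and \cref{prop:DL-operation}. The essential structural feature is that $\varphi$ is governed by the $C_p$-Tate cohomology of $\E_{n+1}(p)\simeq\mathrm{Conf}_p(\R^{n+1})$; by \cref{prop:EnoperadEuler} the Euler class of this operad is nilpotent, and the nonvanishing operation I expect to find sits right at the boundary of this nilpotence.

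The third step is to evaluate $\varphi$ on $C^*(S^n;A)=F(S^n_+,A)$. As a genuine $\E_\infty$-cochain algebra its power operations are computed topologically, and I expect $\varphi$ to restrict, on the fundamental class $\iota\in H^n(S^n;A)$, to the reduced Frobenius of $\iota$. In the $\F_p$-case this is the operation $\Sq^0$, whose nontriviality on $H^n(S^n;\F_p)$ is exactly the assertion that $\varphi(\iota)\neq 0$; since $\varphi$ vanishes on the putative trivial model, this is the desired contradiction. In the $\KU$-case one cannot rely on mod-$p$ (co)homology — $A\otimes\F_p$ may vanish — so I would instead run the parallel argument with $K(1)$-local power operations, using that the $K$-theoretic Tate construction $\KU^{tC_p}$ is nonzero, so that $\varphi$ is again nonzero precisely because $A/p\neq 0$. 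In both cases the operation would vanish for rational coefficients, since $(-)^{tC_p}$ is rationally trivial; this is the conceptual reason the dichotomy of the \hyperlink{conj}{Triviality Conjecture} is governed by rationality.

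The main obstacle is the computation in the third step, namely identifying the $\E_{n+1}$-restricted operation $\varphi(\iota)$ with an honestly nonzero Tate class. Two points require care. First, one must check that passing from the full $\E_\infty$-structure of the cochains down to $\E_{n+1}$ retains the operation — that the surviving power of the Euler class genuinely detects $\iota$ — which is where the precise order of nilpotence of the $\E_{n+1}$-operad from \cref{prop:EnoperadEuler} enters. Second, the $\KU$-case is the most delicate: lacking ordinary mod-$p$ invariants, one must control the $K$-theoretic Tate cohomology directly in order to conclude that $\varphi$ remains nonzero whenever $A/p\neq 0$.
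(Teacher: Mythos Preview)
Your strategy---base-change via \cref{rem:detecting-non-triviality}, then exhibit a power operation that vanishes on trivial $\E_{n+1}$-algebras but not on cochains of $S^n$---is exactly the paper's. But you explicitly leave the key computation as ``the main obstacle,'' and the paper resolves it in ways your sketch does not anticipate. In the $\F_p$ case the paper simply cites the Dyer--Lashof operation $Q^0\colon\pi_{-n}\to\pi_{-n}$ available on any $\E_{n+1}$-$\F_p$-algebra (from Lawson), together with the fact that $Q^0$ acts as the identity on cochains of spaces; a two-line diagram chase then gives the contradiction. Your Tate-Frobenius is morally the same construction and should unwind to $Q^0$, but that identification and the nonvanishing on $H^n(S^n;\F_p)$ is precisely the step you flag as undone.

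The $\KU$ case is where your sketch is genuinely underdeveloped. The paper does not use a raw Tate-Frobenius: it passes to the $p$-completion $A\hat\otimes\KU$ (which you omit), writes $n=2r-i$ with $i\in\{0,1\}$, and invokes truncated operations $\theta^i_r\colon\pi_i(B)\to\pi_i(B/p^r)$ defined for $p$-complete $\E_{2r+1-i}$-$\KU$-algebras, together with the explicit formula $\theta^0(\beta^m)=p^{m-1}\beta^m$ coming from Adams operations. If the resulting class in $\pi_0(A\hat\otimes\KU/p^r)$ were zero, one would obtain $p^{r-1}=p^r x$ in $\pi_0(A\hat\otimes\KU)$; the decisive non-formal input is then a nilpotence theorem of Hahn for $K(1)$-local $\E_m$-rings, which forces $1-px$ to be simultaneously nilpotent and a unit, hence $A\hat\otimes\KU=0$. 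None of this---the truncation to mod~$p^r$, the exact $\E_{2r+1-i}$ threshold matched to $n$, or the nilpotence input---is visible in your outline. Finally, your invocation of \cref{prop:EnoperadEuler} is a heuristic at best: that result governs the operad-to-monad comparison and plays no role in the paper's power-operation argument.
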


We will prove \cref{thm:conj} in \cref{subsec:proofoconjn12} and \cref{thm:conj2} in \cref{subsec:powerops}.
The two results are not unrelated. For instance, we record the following implication.
\begin{lemma}\label{lemma:implication}
\cref{thm:conj2} implies Theorems~\ref{thm:conj}~\ref{item:thm2} and \ref{thm:conj}~\ref{item:thm3}.
\end{lemma}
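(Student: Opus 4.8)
\emph{Reduction.} The \emph{if} direction of the Triviality Conjecture holds for every $A$ by \cref{lemma:rational-case} (applied with $m=n+1$): a rational $A$ has $C^*(S^n;A)$ which is $\E_{n+1}$-trivial over $A$. So in both cases it remains to prove the \emph{only if} direction, and I would argue its contrapositive: if $A$ is not rational then $C^*(S^n;A)$ is not $\E_{n+1}$-trivial over $A$. By \cref{thm:conj2} this holds as soon as there is a prime $p$ with $A\otimes\F_p\neq 0$ or $A\otimes\KU/p\neq 0$. Thus the lemma reduces entirely to the detection statement that a non-rational $A$ in the relevant class is seen by mod-$p$ homology or by $\KU/p$ at some prime. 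Conversely a rational $A$ is an $H\Q$-module, so $A\otimes\F_p\simeq 0\simeq A\otimes\KU/p$ for all $p$ as $\F_p$ and $\KU/p$ are torsion; this is also why \cref{thm:conj2} automatically forces non-rationality of the $A$ to which it applies.

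\emph{Bounded below (\ref{item:thm2}).} Here $\KU/p$ is unnecessary, and the plan is to show that a bounded-below $A$ with $A\otimes\F_p\simeq0$ for all $p$ is rational. Let $c$ be the bottom degree and $M=\pi_c(A)$. The mod-$p$ Hurewicz theorem gives $\pi_c(A\otimes\F_p)\cong M/pM$ and locates the $p$-torsion $M[p]$ inside $\pi_{c+1}(A\otimes\F_p)$; vanishing for all $p$ forces $M$ to be uniquely divisible, i.e.\ a $\Q$-vector space. Replacing $A$ by the cofibre of $\Sigma^c HM\to A$ and inducting up the Postnikov tower --- using that $HM\otimes\F_p\simeq0$ and $A\otimes\F_p\simeq0$ make the next truncation again $\F_p$-acyclic --- shows every homotopy group of $A$ is rational, so $A$ is rational. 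This step is routine.

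\emph{Underlying $\E_\infty$ (\ref{item:thm3}).} This is the heart of the matter, and the $\KU/p$-clause is genuinely needed: $\KU$ is a non-rational $\E_\infty$-ring with $\KU\otimes\F_p\simeq0$ for all $p$, detected only by $\KU/p$. The plan is chromatic. Since $A\otimes\F_p$ computes the height-$\infty$ Morava $K$-theory $K(\infty)=H\F_p$, and $\KU/p$ has the Bousfield class of $K(1)$, the hypotheses $A\otimes\F_p\simeq0$ and $A\otimes\KU/p\simeq0$ at all primes say exactly that $K(\infty)_*A=0$ and $K(1)_*A=0$. I would then invoke Hahn's theorem that for an $\E_\infty$-ring the vanishing of one finite-height Morava $K$-theory forces the next, $K(m)_*A=0\Rightarrow K(m+1)_*A=0$ for $1\le m<\infty$, to propagate $K(1)_*A=0$ to $K(m)_*A=0$ for every finite $m\ge1$; with $K(\infty)_*A=0$ this yields $K(m)_*A=0$ for all $1\le m\le\infty$ and all primes. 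This is precisely where the $\E_\infty$-structure enters, which is why \ref{item:thm3} asks for an underlying $\E_\infty$-algebra and not merely an $\E_{n+2}$-one. To finish, recall that $A$ is rational iff multiplication by each prime $p$ is invertible on $A$; for this it suffices that $A\otimes\bS/p^k\simeq0$ for some $k=k(p)$, since invertibility of $p^k$ forces that of $p$. Taking $k$ large enough that $\bS/p^k$ is an $\E_1$-ring, $A\otimes\bS/p^k$ is a ring spectrum which is acyclic for $K(0)=H\Q$ (being $p$-torsion), for each $K(\infty)=H\F_q$ (at $q=p$ by hypothesis, at $q\neq p$ by $p$-torsionness), and for all finite $K(m)$ by the previous step; by the field form of the nilpotence theorem --- the Morava $K$-theories $K(n)$, $0\le n\le\infty$, jointly detect nonzero ring spectra --- it must vanish. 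Hence $p$ acts invertibly on $A$ for every $p$, so $A$ is rational.

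\emph{Main obstacle.} The bounded-below case is elementary; the crux is the $\E_\infty$ case. The difficulty is that $\F_p$ and $\KU/p$ only see heights $\infty$ and $1$ directly, so a non-rational $\E_\infty$-ring supported purely at heights $\ge2$ would a priori evade detection. Bridging this gap is exactly the role of Hahn's support-propagation theorem, whose reliance on the $\E_\infty$-structure (through power operations) is what pins down the hypothesis of \ref{item:thm3}; the remaining vanishing is then a standard consequence of the nilpotence theorem.
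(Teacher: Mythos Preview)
Your proof is correct and follows essentially the same strategy as the paper's: the bounded-below case reduces to $A\otimes\F_p\neq 0$ for some $p$, and the $\E_\infty$ case proceeds via Hahn's theorem plus the nilpotence theorem to show that $A$ is rational. The only notable difference is that the paper applies the nilpotence theorem to the ring spectrum $A\otimes\End(\bS/p)$ (whose $\E_1$-structure is free), whereas you use $A\otimes\bS/p^k$ for $k$ large, which requires Burklund's result that $\bS/p^k$ is $\E_1$; the paper's choice avoids that dependence.
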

\begin{proof}
First, suppose $A$ is bounded below and not rational. Then there exists a prime $p$ such that $A \otimes \F_p$ is non-zero, so \cref{thm:conj2} implies that $C^*(S^n;A)$ is not $\E_{n+1}$-trivial over $A$, showing the first implication. To prove the second, suppose $A$ underlies an $\E_\infty$-algebra and that $A \otimes \F_p = 0 = A \otimes \KU/p$ for all primes $p$. We need to show that $A$ is rational. To that end,  as a consequence of \cite{Hahn}, we find that $L_{K(n,p)}A = 0$ for all $0 < n \leq \infty$ and all primes. Consequently, $K(n,p) \otimes A \otimes \End(\bS/p) = 0$ for all $p$ and all $0\leq n \leq \infty$.\footnote{Thanks to Achim Krause for reminding us of this fact.} The nilpotence theorem \cite[Theorem 1]{HS} then implies that the ring spectrum $A \otimes \End(\bS/p)$ is null, implying that $p$ is invertible in $A$ for all primes $p$; see \cite[Lemma 2.3]{LMMT} for a similar argument.
\end{proof}

\subsection{Operadic structure maps}
\label{subsec:proofoconjn12}
In this section, we continue to write $\E_n$ for the non-unital $\E_n$-operad in spectra $\Sigma^\infty_+ \E_n^\nu$ and work towards the proof of \cref{thm:conj}.
Recall that for a spectrum $X$, a natural number $n$, and a prime $p$, we write
\[ D_{p}^{\E_n}(X) = [\E_n(p)\otimes X^{\otimes p}]_{h\Sigma_p}.\]
We will need some relations between these extended powers for particular values of $n$ and $X$. First observe that \cref{thm:factorizationsuspension} yields commutative squares of non-unital operads
\[
\begin{tikzcd}
	S^{-n+1} \E_{n-1} \ar[r] \ar[d] & S^{-n+1} \E_{n+1} \ar[d] && S^{-n+1} \E_{n-1} \ar[r] \ar[d] & S^{-n+1} \E_{\infty} \ar[d] \\
	\E_{0} \ar[r] & \E_{2}, && \E_{0} \ar[r] & \E_{\infty}.
\end{tikzcd}
\]
The horizontal maps are the usual inclusions, the first three vertical maps are $(n-1)$-fold compositions of the `wrong-way maps' $\beta$, and the last vertical morphism $S^{-n+1} \E_{\infty} \to \E_\infty$ is an $(n-1)$-fold composition of suspension maps $\sigma$. If we take free algebras on the spectrum $\bS^{-1}$ and use that $\E_0(p) = 0$, then the squares above induce sequences
\[
\Sigma^n D_p^{\E_{n-1}}(\bS^{-n}) \lto \Sigma^n D_p^{\E_{n+1}}(\bS^{-n}) \lto \Sigma D_p^{\E_2}(\bS^{-1})
\]
and
\[
\Sigma^n D_p^{\E_{n-1}}(\bS^{-n}) \lto \Sigma^n D_p^{\E_{\infty}}(\bS^{-n}) \lto \Sigma D_p^{\E_{\infty}}(\bS^{-1})
\]
equipped with nullhomotopies of the composite maps. Sequences of this form were already analyzed by Kuhn \cite{Kuhn}.

\begin{lemma}
\label{lem:extendedpowercofib}
After $p$-localization, the two sequences above are cofiber sequences. As a consequence the commutative square
\[
\begin{tikzcd}
	\Sigma^n D_p^{\E_{n+1}}(\bS^{-n}) \ar[r] \ar[d] & \Sigma^n D_p^{\E_{\infty}}(\bS^{-n}) \ar[d]  \\
	\Sigma D_p^{\E_{2}}(\bS^{-1}) \ar[r] & \Sigma D_p^{\E_{\infty}}(\bS^{-1}),
\end{tikzcd}
\]
again obtained from \cref{thm:factorizationsuspension}, is a pushout.
\end{lemma}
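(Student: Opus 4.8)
The pushout assertion is a purely formal consequence of the two cofiber-sequence assertions, so I would spend essentially all of the work on the latter and deduce the square at the end.

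\emph{Setting up the equivariant picture.} The plan is to rewrite every term as a $\Sigma_p$-homotopy orbit and to realize the two sequences as orbits of cofiber sequences of $\Sigma_p$-equivariant spectra. Keeping the free-algebra input equal to $\bS^{-1}$ throughout and using $S\O(p)\simeq \bS^{\rho_p}\otimes \O(p)$ (\cref{subsec:prelim}) together with the $\Sigma_p$-equivariant identification $(\bS^{-1})^{\otimes p}\simeq \bS^{-1}\otimes\bS^{-\rho_p}$, one identifies the first sequence of the lemma with the $\Sigma_p$-homotopy orbit of
\[ \E_{n-1}(p)\otimes\bS^{-n\rho_p} \lto \E_{n+1}(p)\otimes\bS^{-n\rho_p} \lto \E_2(p)\otimes\bS^{-\rho_p}, \]
and similarly for the second sequence, with $\E_{n+1}$ and $\E_2$ replaced by $\E_\infty$. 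Here the first map is induced by the standard inclusion $\mathrm{Conf}_p(\R^{n-1})\hookrightarrow \mathrm{Conf}_p(\R^{n+1})$ (twisted by the common factor $\bS^{-n\rho_p}$) and the second by the iterated maps $\beta$ of \cref{thm:factorizationsuspension}; the given nullhomotopy of the composite reflects $\E_0(p)=0$. Since $(-)_{h\Sigma_p}$ preserves cofiber sequences, it suffices to show that these are cofiber sequences of $\Sigma_p$-spectra after $p$-localization.

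\emph{The cofiber sequences.} I would first dispatch $p=2$ by hand, as it already exhibits the mechanism. Here $\mathrm{Conf}_2(\R^m)\simeq_{\Sigma_2} S(m\rho_2)$, the antipodal sphere, so the equivariant cofiber sequence $\Sigma^\infty_+ S(m\rho_2)\to \bS\to \bS^{m\rho_2}$, smashed with the appropriate $\bS^{-k\rho_2}$ and pushed to orbits, identifies all the terms with stunted projective spectra. The two sequences become the standard cofiber sequences $\RP^{-2}_{-n}\to \RP^{0}_{-n}\to\RP^{0}_{-1}$ and $\RP^{-2}_{-n}\to\RP^{\infty}_{-n}\to\RP^{\infty}_{-1}$, which are manifestly exact (and in fact integrally so). For general $p$ the spaces $\mathrm{Conf}_p(\R^m)$ are no longer representation spheres, and this is precisely where I would invoke Kuhn \cite{Kuhn}: the $p$-local homotopy orbits $(\E_m(p)\otimes\bS^{-k\rho_p})_{h\Sigma_p}$ are the ``stunted'' extended-power spectra for which \cite{Kuhn} produces exactly such cofiber sequences. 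Matching our sequences, together with their nullhomotopies, to Kuhn's cofiber sequences yields the claim $p$-locally. I expect the identification for odd $p$ — pinning down Kuhn's sequences in the present twisting and indexing, and thereby seeing why the statement genuinely requires $p$-localization (the orbit-level sequences pick up off-$p$ contributions, e.g.\ from transfers, that obstruct integral exactness) — to be the main obstacle.

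\emph{Deducing the pushout.} Granting the two cofiber sequences, the square is formal. Both sequences share the same first term $A:=\Sigma^n D_p^{\E_{n-1}}(\bS^{-n})$, and the map from the first sequence to the second — induced by $\E_{n+1}\to\E_\infty$ and $\E_2\to\E_\infty$, cf.\ \cref{thm:factorizationsuspension} — is the identity on $A$. In the stable $\infty$-category of $p$-local spectra a commutative square is a pushout if and only if the induced map on vertical cofibers is an equivalence. Via the cofiber sequences $A\to B_i\to C_i\to\Sigma A$ each vertical cofiber $\mathrm{cofib}(B_i\to C_i)$ is identified with $\Sigma A$, and under these identifications the comparison map is $\Sigma(\id_A)$, hence an equivalence. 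Therefore the square is a pushout, completing the proof.
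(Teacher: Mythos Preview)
Your treatment of the $p=2$ case and the deduction of the pushout are essentially the same as the paper's; the paper also identifies the sequences with the stunted projective cofiber sequences and deduces the square by comparing (co)fibers of the vertical maps.

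The genuine gap is your odd-primary argument. You plan to invoke \cite{Kuhn} directly, but the paper explicitly warns (in the remark immediately following the lemma) that the odd-primary case of \cite[Prop.~1.3]{Kuhn} is ``in general not correct as stated.'' So citing Kuhn as a black box here is not safe: you would have to re-prove the specific instances you need, and your proposal does not indicate how. The paper instead bypasses Kuhn entirely at odd primes and argues directly on $\F_p$-homology. Using the basis of $H_*(\Sigma^n D_p^{\E_m}(\bS^{-n});\F_p)$ in terms of Dyer--Lashof operations $Q^s(\iota_n)$ and $\beta Q^s(\iota_n)$ (from \cite[\S16]{GKRW}), one checks that the suspension morphism $\Sigma^n D_p^{\E_\infty}(\bS^{-n})\to\Sigma D_p^{\E_\infty}(\bS^{-1})$ sends $Q^s(\iota_n)\mapsto Q^s(\iota_1)$ and is therefore surjective with kernel exactly the image of $H_*(\Sigma^n D_p^{\E_{n-1}}(\bS^{-n}))$; together with the pre-existing nullhomotopy this shows the second sequence is a cofiber sequence. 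The first sequence is then obtained by restricting to the submodule with $s\leq 0$. A small but essential point in this computation is that the Browder bracket $[\iota_n,\iota_n]$ vanishes by graded antisymmetry (the degrees match in parity), so no extra classes appear.

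In short: replace the appeal to \cite{Kuhn} at odd primes by a direct $\F_p$-homology check via the Dyer--Lashof basis; the rest of your outline is fine and matches the paper.
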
 
\begin{proof}
If the two sequences are indeed cofiber sequences then the fibers of the vertical maps in the square agree, implying that it is indeed a pushout. At $p=2$ it is rather straightforward to see that the sequences are cofiber sequences by explicitly identifying them with
\[
\RP_{-n}^{-2} \lto \RP_{-n}^{0} \lto \RP_{-1}^{0} 
\]
and
\[
\RP_{-n}^{-2} \lto \RP_{-n}^{\infty} \lto \RP_{-1}^{\infty}.
\]
For an odd prime $p$ we argue using homology. Since we have already specified nullhomotopies for the composite maps, it will suffice to show that taking homology with $\F_p$-coefficients yields short exact sequences of graded vector spaces. Write $\iota_n \in H_{-n}(\bS^{-n};\F_p)$ for the fundamental class. According to \cite[\S 16]{GKRW} the homology of $\Sigma^n D_p^{\E_{\infty}}(\bS^{-n})$ has a basis consisting of the classes
\begin{eqnarray*}
Q^s(\iota_n) \in H_{2s(p-1)}(\Sigma^n D_p^{\E_{\infty}}(\bS^{-n});\F_p) && \text{for } 2s \geq -n, \\
\beta Q^s(\iota_n) \in H_{2s(p-1)-1}(\Sigma^n D_p^{\E_{\infty}}(\bS^{-n});\F_p) && \text{for } 2s > -n,
\end{eqnarray*}
with $Q^s$ and $\beta Q^s$ the Dyer--Lashof operations. In particular, for $n = 1$ we have classes $Q^s(\iota_1)$ and $\beta Q^s(\iota_1)$ for $s \geq 0$ and the suspension morphism $\Sigma^n D_p^{\E_{\infty}}(\bS^{-n}) \to \Sigma D_p^{\E_{\infty}}(\bS^{-1})$ maps $Q^s(\iota_n)$ to $Q^s(\iota_1)$ and similarly for $\beta Q^s$. Indeed, the suspension arises from an $\E_\infty$-map $\free_{\E_\infty}(\bS^{-n}) \to \Omega^{n-1} \free_{\E_\infty}(\bS^{-1})$ and is therefore compatible with Dyer--Lashof operations. Thus the induced map in homology is a surjection with kernel spanned by the classes $Q^s(\iota_n)$ and $\beta Q^s(\iota_n)$ with $s \leq -1$. This submodule is precisely the image of the homology of $\Sigma^n D_p^{\E_{n-1}}(\bS^{-n})$ (cf.\ \cite[\S 16]{GKRW}).\footnote{Note that the top two operations $Q^{-1}$ and $\beta Q^{-1}$ are labelled $\xi$ and $\zeta$ in loc.\ cit.} Here we have used that the Browder bracket vanishes for degree reasons. Indeed, the fundamental class $\iota_n$ has degree $-n$, whereas the bracket has degree $n$. These have the same parity, so that graded antisymmetry of the bracket implies ${[\iota_n,\iota_n]} = 0$. Finally, we conclude that the second sequence is indeed short exact on homology.

To deduce that the first sequence is a cofiber sequence, we consider the square of the lemma again. We have already analyzed the homology of the vertical map on the right. The homology of the left-hand map is obtained by restricting to the submodules on classes $Q^s(\iota)$ and $\beta Q^s(\iota)$ for which $s \leq 0$. The resulting sequence 
\[
H_*(\Sigma^n D_p^{\E_{n-1}}(\bS^{-n})) \lto H_*(\Sigma^n D_p^{\E_{n+1}}(\bS^{-n})) \lto H_*(\Sigma D_p^{\E_{2}}(\bS^{-1}))
\]
is still short exact, completing the proof.
\end{proof}

\begin{remark}
The two cofiber sequences of \cref{lem:extendedpowercofib} are special cases of the statement of \cite[Prop.\ 1.3]{Kuhn}, but the odd primary case in loc.\ cit.\ is in general not correct as stated. We will only need the special cases that we established above, and hence only give a proof of these.
\end{remark}

\begin{remark}
\label{rmk:Bockstein}
It is useful to observe that the $\F_p$-homology of $\Sigma D_p^{\E_{2}}(\bS^{-1})$ is spanned by the classes $Q^0(\iota_1)$ and $\beta Q^0(\iota_1)$, with the Bockstein acting as indicated by the notation. It follows that $\Sigma D_p^{\E_{2}}(\bS^{-1})$ is $p$-locally equivalent to the Moore spectrum $\bS^{-1}/p$ and that the wrong-way map
\[
\beta\colon \Sigma D_p^{\E_{2}}(\bS^{-1}) \lto D_p^{\E_{1}}(\bS^0) \cong \bS^0
\]
may be identified with projection to the top cell, i.e., the integral Bockstein map $\bS^{-1}/p \to \bS^0$.
\end{remark}

\begin{remark}
\label{rmk:transfer}
A variation of the previous remark is the following. The homology of $\Sigma D_p^{\E_{\infty}}(\bS^{-1})$ is spanned by the classes $Q^s(\iota)$ and $\beta Q^s(\iota)$ for $s \geq 0$, whereas the homology of $D_p^{\E_\infty}(\bS^0) = \bS^0_{h\Sigma_p}$ has all the same classes except for the one class $\beta Q^0(\iota)$ in degree -1. It follows that there is a $p$-local cofiber sequence
\[
\begin{tikzcd}
\bS^{-1} \lto \Sigma D_p^{\E_{\infty}}(\bS^{-1}) \lto \bS^0_{h\Sigma_p} 
\end{tikzcd}
\]
whose connecting map $\bS^0_{h\Sigma_p} \to \bS^0$ can be identified with the transfer for $\Sigma_p$.
\end{remark}

If $X$ is a (non-unital) $\E_{n+1}$-algebra in $\Sp$, or more generally in $\Mod(A)$ for some $\E_{n+2}$-ring spectrum $A$, there are structure maps 
\[ \mu_{p}^{n+1} \colon D_{p}^{\E_{n+1}}(X) \lto X.\]
We shall be interested in these maps in the case where $X$ is the augmentation ideal of $C^*(S^n;A)$ for $A$ such an $\E_{n+2}$-algebra. In this case, the structure maps are determined by the  composition
\begin{equation} \label{eq:structure-maps}
\mu_{p}^{n+1} \colon D_{p}^{\E_{n+1}}(\bS^{-n}) \lto \bS^{-n} \lto \Sigma^{-n} A, 
\end{equation}
where the first is the structure map in case $A= \bS$ and the second is the $n$-fold desuspension of the unit of the ring spectrum $A$. We can shift the first map to obtain a map
\[
\Sigma^n D_{p}^{\E_{n+1}}(\bS^{-n}) \lto \bS^0
\]
which we still somewhat abusively denote $\mu_p^{n+1}$. The $n$-fold wrong-way map $\beta^n$ of \cref{thm:factorizationsuspension} provides a commutative square
\[
\begin{tikzcd}
\Sigma^n D_{p}^{\E_{n+1}}(\bS^{-n}) \ar{r}{\mu_p^{n+1}}\ar{d}{\beta^n} & \bS^0 \ar[equal]{d} \\
D_p^{\E^1}(\bS^0) \ar{r}{\mu_p^1} & \bS^0. 
\end{tikzcd}
\]
The structure map $\mu_p^{1}$ is an isomorphism, since it arises from the cochain algebra of $S^0$. The square therefore allows us to identify $\mu_p^{n+1}$ with $\beta^n$, up to isomorphism. We write $C(\mu_{p}^{n+1})$ for the cofiber of $\mu_p^{n+1}$. In particular, this cofiber  comes equipped with a map $\gamma\colon \bS^0 \to C(\mu_{p}^{n+1})$.  The evident factorization
\[
\Sigma^n D_{p}^{\E_{n+1}}(\bS^{-n}) \xrightarrow{\beta^{n-1}} \Sigma D_{p}^{\E_{2}}(\bS^{-1}) \stackrel{\beta}{\lto} \bS^0 
\]
of $\beta^n$ provides a cofiber sequence
\[
C(\beta^{n-1}) \lto C(\mu_p^{n+1}) \lto C(\beta).
\]
By \cref{lem:extendedpowercofib}, we have $C(\beta^{n-1}) \simeq \Sigma^{n+1} D_p^{\E_{n-1}}(\bS^{-n})$ when $n\geq 2$ and $0$ for $n=1$. Moreover, as observed in \cref{rmk:Bockstein} we may identify $C(\beta)$ with $\bS^0$. The key to our proof will be the following result.

\begin{proposition}\label{lemma:operad-maps}
Let $p$ be a prime and $n \geq 1$. Then after $p$-localization, the cofiber sequence
\[
C(\beta^{n-1}) \lto C(\mu_p^{n+1}) \lto \bS^0
\]
described above is canonically split. The induced map $\bS^0 \xrightarrow{\gamma} C(\mu_p^{n+1}) \simeq \bS^0 \oplus C(\beta^{n-1})$ is then given by $(p,\theta_n)$ for some $\theta_n \in \pi_0(C(\beta^{n-1}))$.
\end{proposition}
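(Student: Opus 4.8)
The plan is to apply the octahedral axiom to the factorization $\mu_p^{n+1}\simeq\beta^n=\beta\circ\beta^{n-1}$ and then to separate the claim into an elementary computation of the $\bS^0$-coordinate of $\gamma$ and the genuinely harder assertion that the sequence splits. First I would record the map of cofiber sequences induced by $\mu_p^{n+1}\simeq\beta\circ\beta^{n-1}$:
\[
\begin{tikzcd}
\Sigma^n D_p^{\E_{n+1}}(\bS^{-n}) \ar{r}{\mu_p^{n+1}} \ar{d}{\beta^{n-1}} & \bS^0 \ar{r}{\gamma} \ar[equal]{d} & C(\mu_p^{n+1}) \ar{d}{\pi} \\
\Sigma D_p^{\E_2}(\bS^{-1}) \ar{r}{\beta} & \bS^0 \ar{r}{\gamma'} & C(\beta).
\end{tikzcd}
\]
Its induced map on cofibers is precisely the projection $\pi\colon C(\mu_p^{n+1})\lto C(\beta)$ of the octahedral cofiber sequence, and commutativity of the right-hand square yields $\pi\circ\gamma\simeq\gamma'$.

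Next I would pin down $\gamma'$. By \cref{rmk:Bockstein} the bottom row identifies $\beta$ with the integral Bockstein $\bS^{-1}/p\lto\bS^0$. Rotating the defining triangle $\bS^{-1}\xrightarrow{\,p\,}\bS^{-1}\lto\bS^{-1}/p\xrightarrow{\,\beta\,}\bS^0$ simultaneously identifies $C(\beta)\simeq\bS^0$ and exhibits $\gamma'\colon\bS^0\lto C(\beta)\simeq\bS^0$ as multiplication by $p$ (up to a sign, which I absorb into the chosen equivalence). Hence, granting the splitting, the $\bS^0$-coordinate of $\gamma$ is $p$, and I would \emph{define} $\theta_n\in\pi_0(C(\beta^{n-1}))$ to be the remaining coordinate, yielding $\gamma\simeq(p,\theta_n)$.

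The crux is the splitting, equivalently the vanishing of the connecting map $\partial\colon\bS^0\simeq C(\beta)\lto\Sigma C(\beta^{n-1})$. I would establish this by comparison with the $\E_\infty$-tower. The same octahedral construction, applied to the compatible factorization $\mu_p^\infty\simeq\beta_\infty\circ\sigma^{n-1}$ of the $\E_\infty$-structure map (with $\beta_\infty\colon\Sigma D_p^{\E_\infty}(\bS^{-1})\lto\bS^0$ the $\E_\infty$-analogue of $\beta$), produces a cofiber sequence $C(\sigma^{n-1})\lto C(\mu_p^\infty)\lto C(\beta_\infty)$; and the pushout square of \cref{lem:extendedpowercofib} furnishes a comparison from the $\E_{n+1}$-octahedron to this one whose left-hand vertical map $C(\beta^{n-1})\xrightarrow{\,\simeq\,}C(\sigma^{n-1})$ is an equivalence, since the vertical cofibers of a pushout square agree. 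Naturality of connecting maps then carries $\partial$ to $\partial_\infty$ precomposed with the comparison $\psi\colon C(\beta)\lto C(\beta_\infty)$, so it suffices to see that $\partial_\infty\circ\psi$ is null. This is where the transfer becomes essential: the relevant $\E_\infty$-structure map is controlled by the transfer cofiber sequence of \cref{rmk:transfer}, and the attendant splitting---of the type analyzed by Kuhn \cite{Kuhn}---forces $\partial_\infty$ to vanish. On $\F_p$-homology the vanishing is transparent, for the Dyer--Lashof classes $Q^s,\beta Q^s$ in the source and target of the relevant composite occupy the disjoint index ranges $s\ge 0$ and $s\le -1$, exactly as in the bookkeeping of \cref{lem:extendedpowercofib}.

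I expect this last point to be the main obstacle: $\F_p$-homology only shows $\partial_\infty$, and hence $\partial$, is null on homology, and upgrading this to an honest and canonical nullhomotopy---and thereby to a canonical splitting of spectra---requires genuine use of the transfer structure rather than mere homological bookkeeping. As a sanity check I would verify the whole package at $p=2$, where $\Sigma^n D_2^{\E_m}(\bS^{-n})$ is the stunted projective spectrum $\RP_{-n}^{\,m-n-1}$; there the asserted splitting reduces to splitting off the top cell of $\RP_{-n}^{-1}$, which is detected by the vanishing of all Steenrod operations $\Sq^i$ into that top cell, since $\binom{-1-i}{i}\equiv 0\pmod 2$ for every $i\ge 1$.
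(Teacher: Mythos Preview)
Your overall strategy—set up the octahedral diagram, identify the $\bS^0$-component of $\gamma$ as $p$ via the Bockstein of \cref{rmk:Bockstein}, and then reduce the splitting question to an $\E_\infty$-comparison governed by the transfer—is the right one and matches the paper's approach. But both of your endgames are incomplete.

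For $p=2$, vanishing of all $\Sq^i$ into the top cell of $\RP_{-n}^{-1}$ is necessary for that cell to split off but not sufficient: a two-cell complex such as $\bS^0\cup_{\eta^2}e^3$ has all primary Steenrod operations trivial yet does not split. The paper's argument here is far simpler and avoids this trap entirely: $\Sigma\RP_{-n}^{-1}$ is Spanier--Whitehead dual to $\RP^n_+$, and the bottom cell of the latter splits off simply because $\Sigma^\infty(Y_+)\simeq \bS^0\vee\Sigma^\infty Y$ for any pointed space $Y$. Dualizing gives the top-cell splitting you want, canonically.

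For odd $p$, you correctly diagnose the gap yourself: $\F_p$-homology bookkeeping cannot by itself produce an honest nullhomotopy of $\partial$, and you do not actually exhibit one. The paper's execution of the ``genuine use of the transfer structure'' proceeds as follows. One passes to the inverse limit over $n$ of the pushout squares of \cref{lem:extendedpowercofib}; dual Goodwillie calculus identifies $\varprojlim_n\Sigma^n D_p^{\E_\infty}(\bS^{-n})$ with $(\bS^{-1})^{t\Sigma_p}$, and after mapping to the evident constant square and taking cofibers one obtains a pushout
\[
\begin{tikzcd}
\varprojlim_n C(\mu_p^{n+1}) \ar[r]\ar[d] & (\bS^0)^{h\Sigma_p} \ar[d] \\
\bS^0 \ar[r,"\simeq"] & \bS^0.
\end{tikzcd}
\]
The right-hand vertical map is then shown to be the canonical projection, which has a canonical section; this is the actual transfer input, and pinning it down uses that $\pi_0\map(\bS_p,\bS_{(p)})=0$. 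Pushing that section around the square splits the inverse-limit sequence and hence each finite-$n$ sequence compatibly. Without this inverse-limit identification of the corner as $(\bS^0)^{h\Sigma_p}$, your appeal to \cref{rmk:transfer} and to Kuhn remains a gesture rather than a proof.
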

\begin{proof}
The case $p=2$ is rather straightforward to deal with directly, so let us do it first. In this case the sequence of the lemma can be identified with the right column in the following diagram, in which all rows and columns are cofiber sequences:
\[
\begin{tikzcd}
\RP_{-n}^0 \ar{r}{\beta^{n-1}}\ar[equal]{d} & \RP_{-1}^0 \ar{r}\ar{d} & \Sigma\RP_{-n}^{-2} \ar{d} \\ 
\RP_{-n}^0 \ar{r}{\mu_p^{n+1}}\ar{d} & \bS^0 \ar{r}\ar{d}{2} & \Sigma\RP_{-n}^{-1} \ar{d} \\
0 \ar{r} & \bS^0 \ar[equal]{r} & \bS^0.
\end{tikzcd}
\]
The spectrum $\Sigma\RP_{-n}^{-1}$ is the Spanier--Whitehead dual of $\RP^n_+$. The bottom cell of the latter splits off; by duality, the projection $\Sigma\RP_n^{-1} \to \bS^0$ to the top cell admits a splitting as well. Furthermore, the diagram shows that the horizontal map $\bS^0 \to \Sigma\RP_n^{-1}$ is indeed of degree 2 onto that top cell. 

We proceed to the case of an odd prime $p$. To prepare, consider the inverse limit $\varprojlim_n \Sigma^n D_p^{\E_\infty}(\bS^{-n})$. By construction this is the evaluation of the functor
\[
F(X) := \varprojlim_n \Sigma^n D_p^{\E_\infty}(\Omega^n X)
\]
at $X = \bS^0$. The functor $F$ is the universal exact (i.e., 1-excisive) approximation \emph{from the left} of the functor $D_p^{\E_\infty}$ and hence the subject of dual Goodwillie calculus. We refer to $F$ as the \emph{dual derivative} of $D_p^{\E_\infty}$. Dual to the usual case, where an $n$-homogeneous functor has trivial linear approximation from the right, an $n$-cohomogeneous functor such as $X \mapsto (X^{\otimes p})^{h\Sigma_p}$ has no linear approximation from the left. Consequently, the fiber sequence 
\begin{equation}\label{dual-derivative}
\Omega (X^{\otimes p})^{t\Sigma_p} \lto (X^{\otimes p})_{h\Sigma_p} \lto (X^{\otimes p})^{h\Sigma_p} 
\end{equation}
shows that the first map produces an equivalence on dual derivatives. Moreover, the first term is an exact functor of $X$.\footnote{Again, we recall that we are working in $p$-local spectra. The decisive point is that in $p$-local spectra, $(-)^{t\Sigma_p}$ vanishes on (additively) $C_p$-induced objects.} Hence the sequence exhibits the first term as the dual derivative of $D_p^{\E_\infty}$. We conclude that $\varprojlim_n \Sigma^n D_p^{\E_\infty}(\bS^{-n}) \cong (\bS^{-1})^{t\Sigma_p}$.

Now take the inverse limit over $n$ of the squares of \cref{lem:extendedpowercofib} to obtain a pushout square
\[
\begin{tikzcd}
	\varprojlim_n \Sigma^n D_p^{\E_{n+1}}(\bS^{-n}) \ar[r] \ar[d] & (\bS^{-1})^{t\Sigma_p} \ar[d]  \\
	\Sigma D_p^{\E_{2}}(\bS^{-1}) \ar[r] & \Sigma D_p^{\E_{\infty}}(\bS^{-1}).
\end{tikzcd}
\]
It admits a map to the pushout square
\[
\begin{tikzcd}
D_p^{\E_1}(\bS^0) = \bS^0 \ar{r}\ar{d}{=} & D_p^{\E_\infty}(\bS^0) = \bS^0_{h\Sigma_p} \ar{d}{=} \\
\bS^0 \ar{r} & \bS^0_{h\Sigma_p}
\end{tikzcd}
\]
by using the limit of the $n$-fold wrong-way maps $\beta^n$ (resp. the $n$-fold suspensions $\sigma^n$) in the top left (resp. the top right) and the map $\beta$ (resp. $\sigma$) in the lower left (resp. the lower right) corners. Taking the cofiber of this map of squares yields a further pushout square
\[
\begin{tikzcd}
\varprojlim_n C(\mu_p^{n+1}) \ar{r}\ar{d} & (\bS^0)^{h\Sigma_p} \ar{d} \\
\bS^0 \ar{r}{\cong} & \bS^0.
\end{tikzcd}
\]
where we have used \cref{rmk:Bockstein} to identify the lower left-hand corner and \cref{rmk:transfer} for the lower right. We now argue that the right vertical map is the canonical projection, which admits a canonical section. Indeed, to do so, we note that by construction, its precomposition with the norm map $\bS_{h\Sigma_p} \to \bS^{h\Sigma_p}$ is the transfer for $\Sigma_p$. Associated to the norm fiber sequence for $\Sigma_p$ acting trivially on $\bS$, we obtain the fiber sequence
\[ \map(\bS^{t\Sigma_p},\bS) \lto \map(\bS^{h\Sigma_p},\bS) \lto \map(\bS_{h\Sigma_p},\bS).\]
Recall that we implicitly work $p$-locally at all times. Then we find $\bS^{t\Sigma_p} \simeq \bS_p$, see e.g.\ \cite[Remark IV.1.6]{NS}. 
Now, $\pi_0(\map(\bS_p,\bS)) = 0$ as follows from the fiber sequence $\tau_{\geq1}\bS \to \bS \to \Z$, the fact that $\tau_{\geq1}\bS$ is $p$-complete and has no $\pi_0$, and that $\mathrm{Hom}(\Z_p,\Z_{(p)}) = 0$. 
Finally, the canonical map $\bS^{h\Sigma_p} \to \bS$ also gives the transfer upon precomposition with the norm, showing the claim.

It remains to verify that the composite of the maps $\bS^0 \to \varprojlim_n C(\mu_p^{n+1}) \to \bS^0$ we have produced is of degree $p$. For this we can inspect the following commutative diagram, analogous to the one for $p=2$ with which we started this proof:
\[
\begin{tikzcd}
\Sigma^n D_p^{\E_{n+1}}(\bS^{-n})  \ar{r}{\beta^{n-1}}\ar[equal]{d} & \Sigma D_p^{\E_{2}}(\bS^{-1}) \ar{r}\ar{d}{\beta} & C(\beta^{n-1}) \ar{d} \\ 
\Sigma^n D_p^{\E_{n+1}}(\bS^{-n})  \ar{r}{\mu_p^{n+1}}\ar{d} & \bS^0 \ar{r}\ar{d}{p} & C(\mu_p^{n+1}) \ar{d} \\
0 \ar{r} & \bS^0 \ar[equal]{r} & \bS^0.
\end{tikzcd}
\]
Here we have again applied \cref{rmk:Bockstein} to describe the middle column.
\end{proof}

We now turn to the proof of \cref{thm:conj} \ref{item:thm1} and \ref{item:thm3}. It is based on the following observation:
The assumption that $C^*(S^n;A)$ is $\E_{n+1}$-trivial over $A$ implies that the composite
\[ \Sigma^n D_p^{\E_{n+1}}(\bS^{-n}) \stackrel{\mu^n_p}{\lto} \bS \lto A \]
is null. When $n\geq 1$, as a consequence of \cref{lemma:operad-maps}, the unit of $A_{(p)}$ then factors as a composite
\[ \bS \xrightarrow{(p,\theta_n)} \bS \oplus \Sigma^{n+1}D_p^{\E_{n-1}}(\bS^{-n}) \xrightarrow{(x+ f)} A_{(p)} \]
for some $x\in \pi_0(A)$ and $f\colon \Sigma^{n+1}D_p^{\E_{n-1}}(\bS^{-n}) \to A_{(p)}$,
giving the relation
\begin{equation}\label{relation-in-pi0} 1 = px +f\theta_n \in \pi_0(A)_{(p)}.\end{equation}

\begin{proof}[Proof of \cref{thm:conj}]
As a warm-up, we will prove part \ref{item:thm2} in case $A$ is connective. In this case, the cell structure of $\Sigma^{n+1}D_p^{\E_{n-1}}(\bS^{-n})$ reveals that there are no maps to $A_{(p)}$, showing that $f=0$. In particular, we deduce from equation \eqref{relation-in-pi0} that $p$ is invertible in $\pi_0(A)_{(p)}$ and hence also in $\pi_0(A)$. As this holds for all primes $p$ we deduce that $A$ is rational as claimed. 

Next we deal with part \ref{item:thm1}. If $n=1$, then the domain of $f$ is contractible, so the same argument as above applies. If $n=2$, then the domain of $f$ is given by $\Sigma^3 D_p^{\E_1}(\bS^{-2})$ which is equivalent to $\bS^{-2p+3}$. In particular, we may think of $\theta_2$ as an element in $\pi_{2p-3}(\bS)$. By Nishida's nilpotence theorem \cite{Nishida}, the map $\theta_2$ is smash-nilpotent. This implies that the element $f\theta_2 \in \pi_0(A)_{(p)}$ is nilpotent in the algebraic sense. In particular, equation \eqref{relation-in-pi0} implies that $p$ is invertible modulo nilpotent elements, and hence is itself invertible. Again, we conclude that $A$ is rational.

To show \ref{item:thm3}, we follow the same strategy and aim to show that $f\theta \in \pi_0(A)_{(p)}$ is nilpotent. To do so, since $A$ is $\E_\infty$, the solution of the May conjecture\footnote{It is worth mentioning that the solution of May's conjecture we use here also makes use of appropriate power operations.} due to Mathew--Naumann--Noel \cite[Theorem B]{MNN} implies that it suffices to show that the image of $f\theta$ under the Hurewicz maps $\pi_0(A)_{(p)} \to H_0(A;\F_p)$ and $\pi_0(A)_{(p)} \to H_0(A;\Q)$ is trivial. This is the case because $\Sigma^{n+1}D_p^{\E_{n-1}}(\bS^{-n}) \otimes k$ is a coconnected spectrum for $k=\Q$ and $k=\F_p$ for all primes $p$. 
\end{proof}

\begin{remark}\label{rem:general-strategy-for-E_infty}
One might wonder whether the element $f\theta_n \in \pi_0(A)$ is always nilpotent, in which case one would deduce from the above argument that $A$ has to be rational. To that end, it would suffice to show that the map $\theta_n$ is smash-nilpotent. However, unlike the cases $n=1,2$ considered above, this is not true for $n\geq 3$ as we explain now, first focusing on the prime $2$. To that end, note that if $\theta_n$ is smash-nilpotent, then $K(1)_*(\theta_n)$ is the zero map. We will argue that in fact this map is non-zero. Indeed, consider the 2-local cofiber sequence
\[ \bS \xrightarrow{(2,\theta_n)} \bS \oplus \Sigma\RP^{-2}_{-n} \lto \Sigma\RP^{0}_{-n}\]
and take its long exact sequence in $\KU$-homology. The first map induces an injective homomorphism and leads to a short exact sequence
\[ 0 \lto \KU_0(\bS) \xrightarrow{(2,\theta_n)} \KU_0(\bS) \oplus \KU_0(\Sigma \RP^{-2}_{-n}) \lto \KU_0(\Sigma \RP^{0}_{-n}) \lto 0. \]
From Adams' calculation of the $K$-theory of projective spaces \cite[Theorem 7.3]{adamsvectorfield} it is easily deduced that this sequence is isomorphic to one of the form
\[ 0 \lto \Z \xrightarrow{(2,\bar{\theta}_n)} \Z \oplus \Z/2^{k}\Z \lto \Z/2^{k+1}\Z \lto 0 \]
with $k= \frac{n-2}{2}$ if $n$ is even and $k = \frac{n-1}{2}$ if $n$ is odd and where $\bar{\theta}_n$ is induced by $\theta_n$. In particular, if $n \geq 3$ then $k \geq 1$ and this sequence implies that $\bar{\theta}_n$ must hit a generator of the group $\Z/2^{k}\Z$. Hence the reduction of $\bar{\theta}_n$ mod 2 is nontrivial, from which it follows that $\theta_n$ induces a nontrivial homomorphism in $\KU/2$-homology, or equivalently in $K(1)$-homology.

A similar argument applies for all other primes $p$. In this case, we have the 
$p$-local cofiber sequence
\[ \bS \xrightarrow{(p,\theta_n)} \bS \oplus \Sigma^{n+1}D_p^{\E_{n-1}}(\bS^{-n}) \lto \Sigma^{n+1}D_p^{\E_{n+1}}(\bS^{-n}) \]
whose long exact sequence in $\KU$-homology again induces a short exact sequence as follows:
\[ 0 \lto \KU_0(\bS) \xrightarrow{(p,\theta_n)} \KU_0(\bS) \oplus \KU_0(\Sigma^{n+1}D_p^{\E_{n-1}}(\bS^{-n})) \lto \KU_0(\Sigma^{n+1}D_p^{\E_{n+1}}(\bS^{-n})) \lto 0. \]
By \cite[Theorem 6.1]{BHK}, this sequence is isomorphic to 
\[ 0 \lto \Z \xrightarrow{(p,\theta_n)} \Z \oplus \Z/p^k\Z \lto \Z/p^{k+1}\Z \lto 0 \]
for $k= \frac{n-2}{2}$ when $n$ is even and $k=\frac{n-1}{2}$ when $n$ is odd. Again, we deduce that $K(1)_*(\theta_n)$ is non-zero, showing that $\theta_n$ is not smash-nilpotent.
For a concrete example, take $n=3$. Then we find that $\theta_3 \colon \bS \to \Sigma^4 D_p^{\E_2}(\bS^{-3}) \simeq \bS^{-2p+2}/p$ and the above calculations imply that $\theta_3$ is given by a lift of a generator of the $p$-torsion of $\pi_{2p-3}(\bS)$ to $\pi_{2p-2}(\bS/p)$.
\end{remark}

\subsection{Power operations}
\label{subsec:powerops}
We separate the two cases of \cref{thm:conj2} in individual propositions.
\begin{proposition}\label{prop:DL-operation}
Let $A$ be an $\E_{n+2}$-algebra in $\Sp$ with $A \otimes \F_p \neq 0$. Then $C^*(S^n;A)$ is not $\E_{n+1}$-trivial over $A$.
\end{proposition}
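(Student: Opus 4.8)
\emph{Plan.} The plan is to isolate a single Dyer--Lashof operation which is defined from the $\E_{n+1}$-structure alone, which is annihilated on every trivial square-zero algebra, and which is nonzero on the fundamental class of $C^*(S^n;-)$. First I would reduce to $\F_p$-coefficients. Suppose for contradiction that $C^*(S^n;A)$ is $\E_{n+1}$-trivial over $A$. Since $\F_p$ is an $\E_\infty$-, hence $\E_{n+2}$-ring, \cref{rem:detecting-non-triviality} applies with $B=\F_p$ and shows that $C^*(S^n; A\otimes\F_p)$ is $\E_{n+1}$-trivial over $\F_p$. Write $R:=A\otimes\F_p$, which is nonzero by hypothesis and is an $\E_{n+2}$-algebra in $\Mod(\F_p)$. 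As $S^n$ is finite, cochains commute with base change, so the augmentation ideal of $B:=C^*(S^n;R)$ is $\m(B)\simeq\Sigma^{-n}R$; the fundamental class $x\in\pi_{-n}(\m(B))\cong\pi_0(R)$ is the unit $1_R$, which is nonzero precisely because $R\neq 0$. It thus suffices to produce a natural operation on $\pi_{-n}$ of augmented $\E_{n+1}$-algebras over $\F_p$ that is forced to vanish by triviality yet does not vanish on $x$.

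The operation I would use is the top Dyer--Lashof operation $Q^0$ acting on classes of homotopy degree $-n$ (for $p=2$; for odd $p$ one uses the analogous degree-$0$ operation $Q^0$, keeping track of the Bockstein-twisted classes $\beta Q^s$). Its presence at the $\E_{n+1}$-level is exactly the content of the $\F_p$-homology computation in \cref{lem:extendedpowercofib}: the class $Q^0(\iota_n)$ lies in $H_*(\Sigma^n D_p^{\E_{n+1}}(\bS^{-n}))$ but not in the image of $H_*(\Sigma^n D_p^{\E_{n-1}}(\bS^{-n}))$, so it is a genuinely higher operation. By construction $Q^0(x)$ is the image of this class under the extended-power structure map $\mu_p\colon D_p^{\E_{n+1}}(\m(B))\to\m(B)$, precomposed with $D_p^{\E_{n+1}}(x)$ for $x\colon\Sigma^{-n}\F_p\to\m(B)$ picking out the fundamental class. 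For a trivial square-zero algebra this structure map is null --- this is the precise sense in which power operations vanish on trivial algebras, coming from the identification of $\tr_{\E_{n+1}}$ with $\Omega^\infty$ of a spectrum object, whose multiplicative structure maps are null --- so $\E_{n+1}$-triviality of $B$ over $\F_p$ forces $Q^0(x)=0$.

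It remains to prove $Q^0(x)\neq 0$, which is the crux. I would deduce this by naturality from the universal case $R=\F_p$: the canonical map $C^*(S^n;\F_p)\to C^*(S^n;R)$ is a map of augmented $\E_{n+1}$-algebras over $\F_p$ carrying fundamental class to fundamental class, so it is enough to check $Q^0\neq 0$ on $H^n(S^n;\F_p)$. This is the identification of the power operations on the $\E_\infty$-algebra $C^*(S^n;\F_p)$ with the Steenrod operations on $H^*(S^n;\F_p)$: under the standard indexing the operation $Q^0$ on a class of degree $-k$ corresponds to $\Sq^0$ (resp.\ $P^0$ when $p$ is odd), which is the identity, so $Q^0(x)=\Sq^0(x)=x\neq 0$ on the fundamental class of $S^n$. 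Combined with the previous paragraph this gives $0=Q^0(x)=x\neq 0$, the desired contradiction; hence $C^*(S^n;A)$ cannot be $\E_{n+1}$-trivial over $A$. (This is consistent with \cref{thm:formality-of-spheres}: no operation surviving only to the $\E_n$-level could detect the non-triviality, which is why the top $\E_{n+1}$-operation is the relevant one.)

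The main obstacle is this last identification: pinning down, with consistent grading conventions, that the top $\E_{n+1}$-Dyer--Lashof operation $Q^0$ on the degree-$(-n)$ fundamental class agrees with the topological identity operation $\Sq^0$, and carrying out the odd-primary bookkeeping needed to isolate the untwisted class $Q^0(\iota_n)$ among the $\beta$-twisted operations $\beta Q^s$. A secondary point requiring care is the claim that every Dyer--Lashof operation genuinely factors through the structure map $\mu_p$, and is therefore annihilated on trivial square-zero algebras; here one uses the naturality of the operation together with the description of the trivial functor recalled in \cref{section:stable-case}.
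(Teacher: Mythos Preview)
Your proposal is correct and follows essentially the same approach as the paper: reduce to $\F_p$-coefficients via \cref{rem:detecting-non-triviality}, use that the top Dyer--Lashof operation $Q^0$ on $\pi_{-n}$ of an $\E_{n+1}$-$\F_p$-algebra vanishes on trivial algebras but acts as the identity on the fundamental class of $C^*(S^n;\F_p)$, and push this along the unit map $\F_p\to A\otimes\F_p$ by naturality. The points you flag as ``obstacles''---the existence of $Q^0$ at the $\E_{n+1}$-level, its vanishing on trivial algebras, and its identification with $\Sq^0$/$P^0$ on cochains---the paper dispatches by direct citation to Lawson's survey rather than via \cref{lem:extendedpowercofib}.
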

\begin{proof}
We first recall that any $\E_{n+1}$-$\F_p$-algebra $B$ admits a natural operation $Q^0 \colon \pi_{-n}(B) \to \pi_{-n}(B)$ \cite[Remark 1.5.3]{Lawson}. This operation vanishes in case $B$ is $\E_{n+1}$-trivial over $\F_p$.
Let us then consider the following commutative diagram
\[ \begin{tikzcd}
	\pi_{-n}C^*(S^n;\F_p) \ar[r,"Q^0"] \ar[d] & \pi_{-n}C^*(S^n;\F_p) \ar[d] \\
	\pi_{-n}C^*(S^n;A \otimes \F_p) \ar[r,"Q^0"] & \pi_{-n}C^*(S^n;A\otimes \F_p) 
\end{tikzcd}\]
where the horizontal arrows are given by the operation $Q^0$. It is a fundamental fact about cochains of spaces that the upper horizontal arrow is the identity \cite[Example 1.5.9]{Lawson}. This shows that the lower horizontal map is non-trivial, since the vertical maps are injective. As a consequence of \cref{rem:detecting-non-triviality} we find that $C^*(S^n;A)$ is not trivial over $A$.
\end{proof}
\begin{remark}
We note that the argument for \cref{prop:DL-operation} above shows that, more generally, if $X$ is a pointed space with $H^n(X;\F_p) \neq 0$ for some $n>0$, then $C^*(X;\F_p)$ is not a trivial square-zero extension of $\F_p$ by the augmentation ideal of $C^*(X;\F_p)$ as an $\E_{n+1}$-$\F_p$-algebra.  
Likewise, $X$ is not $\E_{n+1}$-formal over $\F_p$. Indeed, we need to show that $C^*(X;\F_p)$ is not equivalent to $H^*(X;\F_p)$ as $\E_{n+1}$-$\F_p$-algebra. To see this
we may use that $Q^0$ as an operation on $\pi_{-n}(H^*(X;\F_p))$ is trivial (unless $n=0$), whereas it is the identity on $\pi_{-n}(C^*(X;\F_p))$. Hence, in contrast to the case of rational coefficients, with finite coefficients the only $\E_\infty$-formal spaces are the ones whose cohomology is concentrated in degree 0. In particular, the result of \cite{CPRNW} showing that rational $\E_1$-formality implies rational $\E_\infty$-formality does not extend to fields $k$ of positive characteristic. Indeed, pick any space $X$ such that $H^*(X;k)$ is not concentrated in degree $0$. Then $\Sigma X$ is $\E_1$-formal over $k$ but not $\E_\infty$-formal over $k$. See \cite{FC} for further aspects of such results in positive characteristic.
\end{remark}

\begin{proposition}
Let $A$ be an $\E_{n+2}$-algebra in $\Sp$ with $A \otimes \KU/p \neq 0$. Then $C^*(S^n;A)$ is not $\E_{n+1}$-trivial over $A$.
\end{proposition}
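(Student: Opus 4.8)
The plan is to mirror the proof of \cref{prop:DL-operation}, replacing the mod $p$ Dyer--Lashof operation $Q^0$ by a power operation of chromatic height one. First I would reduce to the $\KU$-linear situation. Since $\KU$ is $\E_\infty$, hence in particular $\E_{n+2}$, applying \cref{rem:detecting-non-triviality} to the functor $-\otimes\KU$ shows that if $C^*(S^n;A)$ were $\E_{n+1}$-trivial over $A$, then $C^*(S^n;A\otimes\KU)$ would be $\E_{n+1}$-trivial over the $\E_{n+2}$-$\KU$-algebra $A\otimes\KU$. As $A\otimes\KU/p=(A\otimes\KU)/p\neq0$, this reduces the claim to the following: an $\E_{n+2}$-$\KU$-algebra $B$ with $B/p\neq0$ has $C^*(S^n;B)$ non-trivial over $B$. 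From now on everything is $\KU$-linear, and we detect non-triviality through the homology theory $\KU/p$.

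The heart of the matter is to produce a natural operation $\vartheta$ on $(\KU/p)_{-n}$ of the augmentation ideal of an $\E_{n+1}$-$\KU$-algebra that vanishes on $\E_{n+1}$-trivial algebras but is non-zero on $C^*(S^n;\KU)$. Vanishing on trivial algebras will be automatic for any operation built from the $p$-fold structure map $D_p^{\E_{n+1}}(\m)\to\m$, since this map is null for a trivial square-zero extension. The non-triviality input is the height-one analogue of the relation $\Sq^0=\mathrm{id}$ used in \cref{prop:DL-operation}. The subtlety is that one cannot use the structure map naively: under the identification of \cref{thm:factorizationsuspension} it agrees with the wrong-way map $\beta^n$, which is of Bockstein type and hence acyclic for the periodic theory $\KU/p$. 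Instead the operation must be extracted from the \emph{integral} torsion computation $\KU_0(\Sigma^{n+1}D_p^{\E_{n+1}}(\bS^{-n}))\cong\Z/p^{k+1}$ recalled in \cref{rem:general-strategy-for-E_infty} (a consequence of Adams' calculation of the $K$-theory of stunted projective spaces \cite{adamsvectorfield} together with \cite{BHK}). Since $k+1\geq1$ for every $n\geq1$, this $p$-torsion is always present, matching the fact that the proposition is asserted for all $n$; the statement that $\theta_n$ hits a generator of the subgroup $\Z/p^k$ is what encodes the nontrivial power operation supported by the fundamental class of $C^*(S^n;\KU)$. I expect $\vartheta$ to be realised as a $K$-theoretic Dyer--Lashof operation in the sense of McClure.

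With $\vartheta$ in hand I would run the comparison argument of \cref{prop:DL-operation}. The unit $\KU\to B$ is the initial map of $\E_{n+2}$-$\KU$-algebras and induces a map of $\E_{n+1}$-algebras $C^*(S^n;\KU)\to C^*(S^n;B)$; naturality of $\vartheta$ gives a commuting square relating its value on the source to its value on the target. The vertical maps are injective on the fundamental class because $B/p\neq0$ forces the unit $\KU/p\to B\otimes\KU/p$ to be non-zero: were it null, the factorisation $\id_{B\otimes\KU/p}=(\mathrm{mult}\otimes\id_{\KU/p})\circ(\id_B\otimes u\otimes\id_{\KU/p})$ would exhibit $\id_{B\otimes\KU/p}$ as null, forcing $B\otimes\KU/p\simeq0$. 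Since $\vartheta$ is non-zero on $C^*(S^n;\KU)$ and the verticals are injective, $\vartheta$ is non-zero on $C^*(S^n;B)$, so the latter is not $\E_{n+1}$-trivial over $B$, which is what we wanted.

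The main obstacle is precisely the construction of $\vartheta$ and the verification that it is non-zero on the fundamental class of $C^*(S^n;\KU)$: in the mod $p$ case this is the clean identity $Q^0=\mathrm{id}$ of \cite{Lawson}, but at height one the degree-preserving structure map is $\KU/p$-acyclic, so the operation is genuinely secondary and must be read off the integral extension $\Z/p^{k+1}$ of \cref{rem:general-strategy-for-E_infty}. Turning this extension into a well-defined natural operation on $\E_{n+1}$-$\KU$-algebras, rather than a statement about a single map, and proving its non-vanishing, is where the real work lies. I would also flag that the purely operadic relation $1=px+f\theta_n$ of \cref{lemma:operad-maps} is by itself insufficient here: because $\theta_n$ is \emph{not} smash-nilpotent (\cref{rem:general-strategy-for-E_infty}), that relation is consistent with $B\otimes\KU/p\neq0$, which is exactly why a genuine power operation is needed and the nilpotence argument behind \cref{thm:conj} does not apply.
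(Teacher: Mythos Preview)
Your strategy—reduce to the $\KU$-linear setting and detect non-triviality through a height-one power operation that vanishes on trivial algebras—is exactly what the paper does. The paper's operation is $\theta^i_r\colon \pi_i(-)\to\pi_i(-/p^r)$, defined for $p$-complete $\E_{2r+1-i}$-$\KU$-algebras; writing $n=2r-i$ with $i\in\{0,1\}$, this is precisely an operation for $\E_{n+1}$-algebras, and on the fundamental class of $C^*(S^n;-)$ over $p$-complete $\KU$ it is multiplication by $p^{r-1}$ into $\Z/p^r$.

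There is, however, a genuine gap in your plan beyond the construction of $\vartheta$. First, the target of the operation must be mod $p^r$, not mod $p$: for $n\geq 3$ one has $r\geq 2$, so $p^{r-1}$ vanishes mod $p$ and an operation valued in $(\KU/p)$-homology cannot detect anything there. Second—and this is the decisive point—your injectivity argument is too weak. Showing the unit $\KU/p\to B/p$ is non-null only gives $1\neq 0$ in $\pi_0(B)/p$, whereas what is needed is $p^{r-1}\neq 0$ in $\pi_0(B)/p^r$; there are rings in which $p$ is not a unit yet $p^{r-1}\in p^r R$. The paper closes this gap with a nilpotence argument: if $p^{r-1}=p^r x$ in $\pi_0(A\,\hat\otimes\,\KU)$ then $p^{r-1}(1-px)=0$; since $A\,\hat\otimes\,\KU$ is a $K(1)$-local $\E_{n+2}$-ring, Hahn's theorem \cite{Hahn2} forces $1-px$ to be nilpotent, while $1-px$ is also a unit (its quotient is $p$-complete with vanishing mod-$p$ reduction), contradicting $A\,\hat\otimes\,\KU\neq 0$.

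So the twist is this: you correctly observe that $\theta_n$ is not smash-nilpotent and that the Nishida-style argument behind \cref{thm:conj}\,\ref{item:thm1} does not apply, and from this you conclude that ``the nilpotence argument \ldots\ does not apply''. In fact a nilpotence theorem \emph{is} essential—just a different one, specific to the $K(1)$-local setting. Without it, the non-vanishing of the operation on $C^*(S^n;A\,\hat\otimes\,\KU)$ cannot be established from the hypothesis $A\otimes\KU/p\neq 0$ alone.
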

\begin{proof}
Again, the proof relies on appropriate power operations for $p$-complete $\KU$-algebras\footnote{In what follows $\KU$ is implicitly meant $p$-complete.} about which we shall use the following facts. 
\begin{enumerate}
\item For $i=0,1$, there are operations $\theta^i \colon \pi_i(A) \to \pi_i(A)$, defined naturally for $p$-complete $\E_\infty$-$\KU$-algebras\footnote{In fact, more generally for $K(1)$-local $\E_\infty$-rings.} satisfying that for any spectrum $X$, the diagram
	\[\begin{tikzcd}
		\KU^{-1}(X) \ar[r,"\theta^1"] \ar[d,"\cong"] & \KU^{-1}(X) \ar[d,"\cong"] \\
		\KU^0(\Sigma X) \ar[r,"\theta^0"] & \KU^0(\Sigma X)
	\end{tikzcd}\]
	commutes. Moreover, for $X=S^{2n}$, we have $\theta^0(\beta^n) = p^{n-1}\beta^n$,\footnote{This follows from the relation $\psi^p(x) -x^p = p\cdot \theta(x)$ for all $x \in \KU^0(X)$.} so also the operation $\theta^1\colon \KU^{-1}(S^{2n-1}) \to \KU^{-1}(S^{2n-1})$ is given by multiplication by $p^{n-1}$.
\item For $i=0,1$, there are operations $\theta_r^i \colon \pi_i(A) \to \pi_i(A/p^r)$, defined naturally for $p$-complete $\E_{2r+1-i}$-$\KU$-algebras, which when $A$ is a $p$-complete $\E_\infty$-$\KU$-algebra, are given by the composite
	\[ \pi_i(A) \xrightarrow{\;\theta^i} \pi_i(A) \lto \pi_i(A/p^r).\]
\end{enumerate}
All these operations (when defined) vanish on trivial $p$-complete $\KU$-algebras. Part (1) above is classical, see e.g.\ \cite{Hopkins}, in particular for $i=0$; for $i=1$ one can use the fiber sequence of \cref{rmk:transfer}. The proof of (2) follows the same strategy, \cite[\S 6.1]{BHK} can be used for the relevant calculations; see also \cite[\S 6.2]{BHK} for discussions about power operations for $p$-complete $\KU$-algebras.

Let us now denote by $A \hat{\otimes} \KU$ the $p$-completion of $A \otimes \KU$. The assumption of the proposition is then equivalent to the statement that $A \hat{\otimes} \KU$ is non-zero. Given this, let $i\in \{0,1\}$ be such that $n= 2r-i$ and consider the commutative diagram
\[\begin{tikzcd}
	\KU^{-i}(S^{2r-i}) \ar[r,"\theta^i_r"] \ar[d] & \KU^{-i}(S^{2r-i})/p^r \ar[d] \\
	(A \hat{\otimes} \KU)^{-i}(S^{2r-i}) \ar[r,"\theta^i_r"] & (A\hat{\otimes} \KU/p^r)^{-i}(S^{2r-i})
\end{tikzcd}\]
whose horizontal maps are induced by $\theta^i_r$.
The vertical maps in the above diagram are induced by the map $\KU \to A \hat{\otimes} \KU$. It follows that the diagram is isomorphic to the diagram
\[ \begin{tikzcd}
	\Z_p \ar[r,"\cdot p^{r-1}"] \ar[d] & \Z/p^r \ar[d] \\
	\pi_0(A\hat{\otimes} \KU) \ar[r] & \pi_0(A\hat{\otimes}\KU/p^r)
\end{tikzcd}\]
whose vertical maps are induced by the unit of $\pi_0(A\hat{\otimes}\KU)$.
We claim that the lower horizontal map is non-zero, showing that $C^*(S^{n};A)$ is not $\E_{n+1}$-trivial over $A$. To see this, it suffices to show $p^{r-1} \in \pi_0(A \hat{\otimes} \KU)/p^r \subseteq \pi_0(A \hat{\otimes} \KU/p^r)$ is non-zero. Aiming for a contradiction, assume that it is zero. Then $p^{r-1} = p^r\cdot x$ for some $x$ in $\pi_0(A\hat{\otimes}\KU)$ and hence $p^{r-1}(1-px) = 0$. Since $n+2 \geq 2r-2$, we find that $A \hat{\otimes} \KU$ is a $K(1,p)$-local $\E_{2r-2}$-algebra, so we deduce from \cite{Hahn2} that $(1-px)$ is nilpotent.\footnote{In case $A$ is $\E_\infty$, this also follows from the solution of May's nilpotence conjecture \cite{MNN} or a $\delta$-ring argument similar to \cite[Lemma 1.5]{Bhatt} using that $\pi_0$ of any $K(1)$-local $\E_\infty$-ring canonically carries the structure of a $\delta$-ring \cite{Hopkins}.} However, it is also invertible since $(A\hat{\otimes} \KU)/(1-px)$ is $p$-complete and $[(A\hat{\otimes} \KU)/(1-px)]/p = [(A\hat{\otimes} \KU)/p]/(1-px) = 0$. This is in contradiction with the assumption that $A \hat{\otimes} \KU$ is non-zero.
\end{proof}

\subsection{Free loop spaces}\label{sec:string}
We finally discuss some applications of our formality results to free loop spaces that might be interesting from the point of view of string topology of spheres.
Let us write $C^*(X)$ for $C^*(X;\Z)$ to simplify notation. We recall that for a connected, nilpotent space $X$ of finite  $\Z$-homological type with finite fundamental group, the free loop space $LX$ as well as $X\times X$ are again of $\Z$-homological finite type. Under this assumption, we have canonical equivalences in $\Alg_{\E_\infty}(\Z)$
\[ C^*(LX) \simeq C^*(X) \otimes_{C^*(X\times X)} C^*(X) \simeq \HH(C^*(X))\]
and that Hochschild homology (relative to $\Z$) is a functor $\HH \colon \Alg_{\E_n}(\Z) \to \Alg_{\E_{n-1}}(\Z)^{\BT}$. The above composite equivalence is then in fact $\T$-equivariant. As a consequence of \cref{thm:formality-of-spheres}, for $n\geq 1$, we obtain an equivalence of $\E_{n-1}$-$\Z$-algebras with $\T$-action
\[ \HH(C^*(\Sigma^n X)) \simeq \HH(H^*(\Sigma^n X)).\]
Indeed, we have canonical equivalences $C^*(\Sigma^nX) \simeq \Omega^n C^*(X)$ and $H^*(\Sigma^n X) \simeq \Omega^n H^*(X)$, so \cref{thm:formality-of-spheres} implies the claim once we observe that there exists an equivalence of $\E_0$-$\Z$-algebras $C^*(X) \simeq H^*(X)$ since the global dimension of $\Z$ is one, see the argument in \cref{ex:suspensions-Q}. In particular, we obtain an equivalence of $\E_{n-1}$-$\Z$-algebras with $\T$-action
\[ C^*(L\Sigma^n X) \simeq \HH(H^*(\Sigma^n X)),\]
providing a reminiscence of formality for $L(\Sigma^n X)$.

As a nice explicit example, for $n\geq 2$ and a commutative ring $A$, we find an equivalence of $\E_{n-1}$-$A$-algebras with $\T$-action
\[ C^*(LS^{n};A) \simeq \HH_A(A[x]/x^2), \quad |x|=-n\]
where $\HH_A$ denotes Hochschild homology relative to $A$. For instance for $A=\Z$,
additively, $\HH(\Z[x]/x^2)$ is very easy to compute, because it is easy to resolve $\Z[x]/x^2$ as a module over $\Z[y,z]/(y^2,z^2)$ (viewed as a graded commutative ring with $|y|=|z|=-n$), where the module action comes from the ring map sending both $y$ and $z$ to $x$, see e.g.\ \cite[E.4.1.8]{Loday}, but the above equivalence also sheds some light on the coherent multiplicative structure in $C^*(LS^n;\Z)$, more precisely its $\E_{n-1}$-algebra structure. 
There are a number of papers surrounding the relation of the string topology of $S^n$ with Hochschild (co)homology of $C^*(S^n)$ or $H^*(S^n)$, in particular for $n=2$ and with coefficients $\F_2$, see e.g.\ \cite{Menichi, PT}. It would be interesting to find a relation between these works and our formality result.

Finally, we mention that in \cite{Naef} the authors show, using the the above relation to the free loop space, that $C^*(S^2;\F_2)$ is not formal as a \emph{framed} $\E_2$-algebra. It is natural to then ask whether $C^*(S^n;k)$ is formal as a framed $\E_n$-algebra if and only if $k$ is rational, but we will not address this here.

\bibliographystyle{amsalpha}
\bibliography{biblio}

\providecommand{\bysame}{\leavevmode\hbox to3em{\hrulefill}\thinspace}
\providecommand{\MR}{\relax\ifhmode\unskip\space\fi MR }
\providecommand{\MRhref}[2]{%
  \href{http://www.ams.org/mathscinet-getitem?mr=#1}{#2}
}
\providecommand{\href}[2]{#2}
\begin{thebibliography}{CPRNW23}

\bibitem[Ada62]{adamsvectorfield}
J.F. Adams, \emph{Vector fields on spheres}, Annals of Mathematics (1962),
  603--632.

\bibitem[AK14]{aronekankaanrinta}
G.~Arone and M.~Kankaanrinta, \emph{The sphere operad}, Bull. Lond. Math. Soc.
  \textbf{46} (2014), no.~1, 126--132.

\bibitem[BB]{blansblom}
M.~Blans and T.~Blom, \emph{On the chain rule in {G}oodwillie calculus}, In
  preparation.

\bibitem[BCN23]{Brantner}
L.~Brantner, R.~Campos, and J.~Nuiten, \emph{{PD operads and Explicit Partition
  Lie Algebras}}, arXiv:2104.03870v4, to appear in Mem. Amer. Math. Soc.
  (2023).

\bibitem[Ber12]{Berglund}
A.~Berglund, \emph{Rational homotopy theory}, Available at the author's
  homepage \url{https://staff.math.su.se/alexb/rathom2.pdf}, 2012.

\bibitem[BF04]{Berger-Fresse}
C.~Berger and B.~Fresse, \emph{Combinatorial operad actions on cochains}, Math.
  Proc. Cambridge Philos. Soc. \textbf{137} (2004), no.~1.

\bibitem[Bha17]{Bhatt}
B.~Bhatt, \emph{{Prismatic Cohomology. Lecture II: Delta rings}}, available at
  \url{https://www.math.ias.edu/~bhatt/teaching/prismatic-columbia/lecture2-delta-rings.pdf},
  2017.

\bibitem[BHK22]{BHK}
L.~Brantner, J.~Hahn, and B.~Knudsen, \emph{{The Lubin-Tate Theory of
  Configuration Spaces: I}}, arXiv:1908.11321v3 (2022).

\bibitem[BM11]{BM}
M.~Basterra and M.~A. Mandell, \emph{Homology of {$E_n$} ring spectra and
  iterated {$THH$}}, Algebr. Geom. Topol. \textbf{11} (2011), no.~2, 939--981.

\bibitem[CPRNW23]{CPRNW}
R.~Campos, D.~Petersen, D.~Robert-Nicoud, and F.~Wierstra, \emph{Lie,
  associative and commutative quasi-isomorphism}, Acta Math. (to appear)
  (2023).

\bibitem[CS22]{chingsalvatore}
M.~Ching and P.~Salvatore, \emph{Koszul duality for topological
  {$E_n$}-operads}, Proc. Lond. Math. Soc. (3) \textbf{125} (2022), no.~1,
  1--60.

\bibitem[FC24]{FC}
O.~Flynn-Connolly, \emph{An obstruction theory for strictly commutative
  algebras in positive characteristic}, arXiv:2404.16681 (2024).

\bibitem[Fre11]{Fresse}
B.~Fresse, \emph{Koszul duality of {$E_n$}-operads}, Selecta Math. (N.S.)
  \textbf{17} (2011), no.~2, 363--434.

\bibitem[GKRW21]{GKRW}
S.~Galatius, S.~Kupers, and O.~Randal-Williams, \emph{Cellular
  {$\mathrm{E}_\mathrm{k}$}-algebras}, arXiv:1805.07184v3, to appear in
  {Ast\'erisque} (2021).

\bibitem[Hah17]{Hahn2}
J.~Hahn, \emph{{Nilpotence in $\mathbb{E}_n$-algebras}}, arXiv:1707.00956
  (2017).

\bibitem[Hah22]{Hahn}
\bysame, \emph{{On the Bousfield classes of $H_\infty$-ring spectra}},
  arXiv:1612.04386v2 (2022).

\bibitem[Hau22]{Haugseng}
R.~Haugseng, \emph{{$\infty$}-operads via symmetric sequences}, Math. Z.
  \textbf{301} (2022), no.~1, 115--171.

\bibitem[Heu21]{heuts}
G.~Heuts, \emph{Lie algebras and {$v_n$}-periodic spaces}, Ann. of Math. (2)
  \textbf{193} (2021), no.~1, 223--301.

\bibitem[HLN21]{HLN}
F.~Hebestreit, M.~Land, and T.~Nikolaus, \emph{On the homotopy type of
  {L}-spectra of the integers}, J. Topol. \textbf{14} (2021), no.~1, 183--214.

\bibitem[Hop14]{Hopkins}
M.~J. Hopkins, \emph{{$K(1)$}-local {$E_\infty$}-ring spectra}, Topological
  modular forms, Math. Surveys Monogr., vol. 201, Amer. Math. Soc., Providence,
  RI, 2014, pp.~287--302.

\bibitem[HS98]{HS}
M.~J. Hopkins and J.~H. Smith, \emph{Nilpotence and stable homotopy theory.
  {II}}, Ann. of Math. (2) \textbf{148} (1998), no.~1, 1--49.

\bibitem[Kuh82]{Kuhn}
N.~J. Kuhn, \emph{The geometry of the {J}ames-{H}opf maps}, Pacific J. Math.
  \textbf{102} (1982), no.~2, 397--412.

\bibitem[Law20]{Lawson}
T.~Lawson, \emph{{$E_n$}-spectra and {D}yer-{L}ashof operations}, Handbook of
  homotopy theory, CRC Press/Chapman Hall Handb. Math. Ser., CRC Press, Boca
  Raton, FL, 2020, pp.~793--849.

\bibitem[LMMT22]{LMMT}
M.~Land, A.~Mathew, L.~Meier, and G.~Tamme, \emph{Purity in chromatically
  localized algebraic {K}-theory}, arXiv:2001.10425v4, to appear in J. Amer.
  Math. Soc. (2022).

\bibitem[Lod98]{Loday}
J.-L. Loday, \emph{Cyclic homology}, second ed., {Grundlehren der
  mathematischen Wissenschaften [Fundamental Principles of of Mathematical
  Sciences]}, vol. 301, Springer-Verlag, Berlin, 1998.

\bibitem[Lur17]{HA}
J.~Lurie, \emph{Higher {A}lgebra}, Available at the author's homepage
  \url{http://www.math.ias.edu/~lurie}, 2017.

\bibitem[Man09]{mandell}
M.~A. Mandell, \emph{Towards formality}, Slides available at
  \url{https://mmandell.pages.iu.edu/talks/Austin3.pdf}, 2009.

\bibitem[May06]{may}
J~Peter May, \emph{The geometry of iterated loop spaces}, vol. 271, Springer,
  2006.

\bibitem[Men09]{Menichi}
L.~Menichi, \emph{String topology for spheres}, Comment. Math. Helv.
  \textbf{84} (2009), no.~1, 135--157.

\bibitem[MNN15]{MNN}
A.~Mathew, N.~Naumann, and J.~Noel, \emph{On a nilpotence conjecture of {J}.
  {P}. {M}ay}, J. Topol. \textbf{8} (2015), no.~4, 917--932.

\bibitem[MNO24]{Naef}
R.~McGowen, F.~Naef, and B.~O'Callaghan, \emph{Non-formality of ${S^2}$ via the
  free loop space}, arXiv:2405.12047 (2024).

\bibitem[Nis73]{Nishida}
G.~Nishida, \emph{The nilpotency of elements of the stable homotopy groups of
  spheres}, J. Math. Soc. Japan \textbf{25} (1973), 707--732.

\bibitem[NS18]{NS}
T.~Nikolaus and P.~Scholze, \emph{On topological cyclic homology}, Acta Math.
  \textbf{221} (2018), no.~2, 203--409.

\bibitem[PT23]{PT}
K.~Poirier and T.~Tradler, \emph{On the string topology {BV}-algebra for
  ${S^2}$ with $\mathbb{Z}_2$ coefficients}, arXiv:2301.05381 (2023).

\bibitem[Sal17]{saleh}
B.~Saleh, \emph{Noncommutative formality implies commutative and {L}ie
  formality}, Algebr. Geom. Topol. \textbf{17} (2017), no.~4, 2523--2542.

\bibitem[Shi23]{Shi}
Y.~Shi, \emph{{Unstable Periodic Homotopy Theory}}, Ph.D. thesis, University of
  Utrecht, 2023.

\end{thebibliography}

\end{document}